\theoremstyle{thmstyleone}%
\newtheorem{theorem}{Theorem}
\newtheorem{corollary}[theorem]{Corollary}
\newtheorem{lemma}[theorem]{Lemma}
\newtheorem{proposition}[theorem]{Proposition}
\newcommand\ve{\varepsilon}
\newcommand{\R}{\mathbb{R}}
\newcommand{\E}{\mathbb{E}}
\newcommand{\T}{\mathbb{T}}
\renewcommand{\div}{\mathop{\rm div}\nolimits}
\newcommand{\td}{\tilde}
\renewcommand{\bar}[1]{\overline{#1}}
\newcommand{\hd}{\widehat{\textbf{d}}}
\newcommand{\pa}{\partial}
\newcommand{\Q}{\mathcal{Q}}
\renewcommand*{\tilde}{\widetilde}
\newcommand{\uone}{{\bf{z}}}
\newcommand{\utwo}{{\bf{v}}}
\newcommand{\pone}{Q}
\newcommand{\ptwo}{\pi}
\newcommand{\rd}{\,\mathrm{d}}
\newcommand{\uu}{{\bf{u}}}
\newcommand{\dd}{{\bf{d}}}
\newcommand{\hP}{\widehat {\ptwo}}
\NewDocumentCommand{\oldnorm}{som}{%
  \IfBooleanTF{#1}
    {\oldnormaux*{#3}}
    {\IfNoValueTF{#2}
       {\oldnormaux*{\vphantom{dq}#3}}
       {\oldnormaux[#2]{#3}}%
    }%
}
\theoremstyle{thmstyletwo}%
\newtheorem{remark}{Remark}%
\theoremstyle{thmstylethree}%
\newtheorem{definition}{Definition}%
\begin{document}

\title[Article Title]{Partial Regularity for the Three-Dimensional Stochastic Ericksen--Leslie Equations}


\author*[1]{\fnm{Hengrong} \sur{Du}}\email{hdu@fisk.edu}

\author[2]{\fnm{Chuntian} \sur{Wang}}\email{cwang27@ua.edu}


\affil*[1]{\orgdiv{Department of Mathematics and Computer Science}, \orgname{Fisk University}, \orgaddress{\street{205 Dubois Hall}, \city{Nashville}, \postcode{37208}, \state{TN}, \country{USA},} ORCID: 0000-0003-2392-8963}

\affil[2]{\orgdiv{Department of Mathematics}, \orgname{The University of Alabama}, \orgaddress{\city{Tuscaloosa}, \postcode{35487}, \state{AL}, \country{USA},} ORCID: 0000-0002-3888-7589}


\abstract{In this article, we investigate the global existence of martingale suitable weak solutions to stochastic Ericksen--Leslie equations with additive noise
 in a 3D torus.   The notion of suitable weak solutions
 {has} been introduced to     address    possible emergence of finite-time singularities, which   remains  a 
 {notably}
challenging question in the field of fluid dynamics. 
Weak solutions offer an approach to account for these potential singularities. A restricted class of weak solutions that exhibit a higher level of regularity which are therefore more likely to be physically meaningful, is naturally called for. 
Consequently,    
  \textit{suitable weak solutions},  
i.e., 
weak solutions that satisfy a local energy inequality, become a focus of research, including   {investigations} about  how regular   these  solutions can be.   
In this article, we prove that,  despite 
 {the presence of}
 white noise, the paths of martingale suitable weak solutions of 3D stochastic Ericksen--Leslie equations exhibit singular points of one-dimensional parabolic Hausdorff measure zero.
To  establish this result,  we have  {utilized} two techniques,  which can potentially be generalized to handle other   {stochastically forced} complex fluid dynamics equations with a similar
  structure.
 Firstly,  a local energy-preserving approximation is constructed which markedly facilitates   the proof of the  global     existence of martingale suitable weak solutions; secondly,  to demonstrate   partial regularity of these solutions,   a blow-up argument is formulated, 
which efficiently yields the desired key estimate.
}

\keywords{Stochastic PDEs, Partial regularity, Complex fluid  {dynamics}, Hydrodynamics of liquid crystal}


 \pacs[MSC Classification]{60H15, 35R60, 60H30, 76D06, 76D03}

\maketitle

\section{Introduction}
\label{sec:intro}

Liquid crystal, an anisotropic phase situated between the isotropic liquid phase and solid phase, has 
drawn considerable attention from researchers
  from various disciplines. Among all types of liquid crystal states, the nematic phase is particularly interesting for its characteristic that rod-shaped molecules display the orientational order (tending to point in the same direction) without positional order.  
The  Ericksen--Leslie equations  (see for instance \cite{Ericksen1961, Ericksen1966,Leslie1968}) serve as a fundamental mathematical model for describing the hydrodynamics of nematic liquid crystal flows, 
  a subject of extensive research by itself  \cite{
lin2014recent, xu2022recent,  zarnescu2021mathematical}.  To characterize  orientations of nematic liquid crystal molecules, `order parameters' are commonly adopted;  the so-called `director theory', which 
 {covers}
%
 {the Ericksen--Leslie theory \cite{lin1989nonlinear} as a special case,   employs unit vector field   to represent alignment of the molecules. }

%

The Ericksen--Leslie equation system   typically   combines momentum balance with the evolution of molecular orientation, captured by the Navier--Stokes equations  \cite{Foias2001, Temam4,  Temam1} and  the heat flow of harmonic maps \cite{Linwang2008},  respectively.  
In this article  we focus on the \emph{simplified} version of the full Ericksen--Leslie equation system,  
 which was initially studied in a series of works   including 
\cite{Lin1995, lin1995partial}:
\begin{equation}
  \left\{
    \begin{array}{l}
      \rd {\uu} ^ \epsilon-\Delta {\uu} ^ \epsilon \rd t+{\uu} ^ \epsilon \cdot \nabla {\uu} ^ \epsilon\rd t+\nabla P ^ \epsilon \rd t =-\div(\nabla {\dd} ^ \epsilon\odot \nabla {\dd} ^ \epsilon)\rd t , \\
      \div {\uu} ^ \epsilon=0, \\
      \rd {\dd} ^ \epsilon-\Delta {\dd} ^ \epsilon\rd t+{\uu} ^ \epsilon\cdot \nabla {\dd} ^ \epsilon\rd t=-f_\epsilon({\dd} ^ \epsilon)\rd t.
    \end{array}
    \right.
    \label{eqn:SEL0p}
\end{equation}
Here with every fixed $\epsilon>0$, the unknown triple 
 $({\uu} ^ \epsilon, P ^ \epsilon, {\dd} ^ \epsilon)$ represents the fluid velocity field, the pressure function, and the director field which characterizes the mean orientation of molecules,  respectively.  Moreover  
\begin{equation}\label{eq:ginz2}
f_\epsilon({\dd}^ \epsilon)=\nabla_{\dd} F_\epsilon({\dd}^ \epsilon), 
\end{equation}
  where
\begin{equation}\label{eq:ginz1}
  F_\epsilon({\dd}^ \epsilon)=\frac{1}{\epsilon^2}(|{\dd}^ \epsilon|^2-1)^2.
\end{equation}
It is evident that  \eqref{eqn:SEL0p}$_{1, 2}$ is a forced  Stokes system with the so-called Ericksen stress tensor 
\begin{equation*}
(\nabla {\dd}^ \epsilon\odot \nabla {\dd}^ \epsilon)_{ij}=\pa_{x_i} {\dd^ \epsilon}\cdot \pa_{x_j}{\dd}^ \epsilon,  
\end{equation*}   while \eqref{eqn:SEL0p}$_3$ is a transported semi-linear heat flow associated to the Ginzburg--Landau potential function  $F_\epsilon({\dd}^ \epsilon)$.

 Subsequently   we will only consider (\ref{eqn:SEL0p}) with  a fixed $\epsilon$; without loss of generality  we will fix $\epsilon=1$ and omit the corresponding subscripts and superscripts. 
 Indeed exploration of the singular limit as $\epsilon\to 0$ in \eqref{eqn:SEL0p}$_{3}$ is an ultimate goal,  which  leads  to the  transported heat flow of harmonic maps for the director field $\dd$ as follows:
\begin{equation*}
 \rd {\dd}-\Delta {\dd}\rd t+{\uu}\cdot \nabla {\dd}\rd t=|\nabla {\dd}|^2 {\dd} \rd t.
 \end{equation*}

  It is well-established that the critical spatial dimension 
 for heat flow of
 harmonic maps 
 is two 
 \cite{Chang1992, Chen1988}.   Consequently,  in space dimension two, it has been
 confirmed that solutions to the  (limited) Ericksen--Leslie
 equation are partially regular \cite{Lin2010}, and singularities
 could occur \cite{Huang2016, Lai2022}.   Moreover, in space dimension two,     the singular limit as
 $\epsilon \to 0$ has been justified through concentration-cancellation
 compactness \cite{Du2022,Kortum2020}.  However,
 in space dimension three, the global well-posedness for  the   (limited) Ericksen--Leslie equation
 {remains a} prominent challenge,   with only a handful of partial results available, 
such as those presented in \cite{hieber2016, hieber2017, hieber2018modeling,hieber2019, hieber Nesensohn2016,lin2016global, Wang2011}.
%
%
%

 While  the study of deterministic Ericksen--Leslie equation system has attracted  considerable attention,  
it is worth noting that the deterministic framework
could 
possibly
overlook  complexities introduced by real-world scenarios, including environmental disturbances \cite{Chorin2013}, measurement uncertainties, and inherent thermal fluctuations \cite{Bhattacharjee2010, Lee2015}. To address these intricacies,  a typical way is to introduce a stochastic component into the system. 
%
Stochastic forcing in  the nematic liquid crystal flows may potentially lead to    emergence of singularities, which  can 
 {cause}
 turbulence in fluid flow   \cite{Buckmaster2019, Guillod2017,  Hou2022,Tao2016} and  defects in molecular alignments \cite{DeGennes1993,   Lavrentovich2001,Virga2018}. 
These phenomena
 bear 
substantial implications,  impacting our understanding of both theory and the practical applications of liquid crystal materials.

To this end,   
in this article, we study the    stochastically forced  Ericksen--Leslie equations (SEL)  
  on the three-dimensional tori $\mathbb{T}^3$ driven by an
additive white noise $W$ as follows: 
\begin{equation}
  \left\{
    \begin{array}{l}
      \rd {\uu}-\Delta {\uu} \rd t+{\uu} \cdot \nabla {\uu}\rd t+\nabla P \rd t =-\div(\nabla {\dd}\odot \nabla {\dd})\rd t+\rd W, \\
      \div {\uu}=0, \\
      \rd {\dd}-\Delta {\dd}\rd t+{\uu}\cdot \nabla {\dd}\rd t=-f ({\dd})\rd t.
    \end{array}
    \right.
    \label{eqn:SEL0}
\end{equation} 
 {Here we note again that  $f ({\dd}) $ ccorresponds to $ f_\epsilon({\dd}^ \epsilon)$ when $\epsilon$ is fixed as 1 (see also Equations \eqref{eq:ginz2} and \eqref{eq:ginz1}). 
Our main objectives here are two-fold. 
Firstly,  we introduce the concept of     martingale \emph{suitable} weak solutions to \eqref{eqn:SEL0} and establish their  global   existence.   
%
Secondly, we demonstrate that  under certain relatively reasonable assumptions on $W$ (see Section \ref{sec:sto}), the set of singular points, where solutions  {become} unbounded,  cannot have a positive one-dimensional parabolic  {Hausdorff} measure. We remark  that the concept of martingale solutions that we formulate here is equivalent to that of statistical solutions,  a topic  extensively studied  in fluid dynamics (see for instance \cite{foias2010note,foias2013properties,foias2019properties}). 

%
%
%
 {
It is worthwhile to discuss the physical relevance of our choice of modeling the stochastic nematic liquid crystals dynamics  by introducing noise into the fluid velocity, as   in (\ref{eqn:SEL0}).  For example, 
  dynamic scattering,
 a well-documented and extensively studied physical phenomenon observed in nematics (see, e.g., \cite{g19687, g1968, Y2018, A1997, Yo1977, hui2015}),  
typically arises in experiments
 where  the  cellular flow of the   liquid     intensifies to the point of becoming turbulent. Here the liquid velocity plays a central role in the system dynamics, heavily affecting the alignment  of the optical director  (see e.g. \cite{DeGennes1993}).
As a result, it is reasonable to treat liquid flow as the main driver of the system dynamics and assume   velocity fluctuations as the dominant source of randomness.
%
%
%
Moreover, 
there are
other electrohydrodynamic instabilities   identified in previous studies  (see e.g. \cite{B1985, Ve1991, Ve1988, Ve1989, Di1975, wim1971}) that  also support the class of SEL models   in which   liquid flow plays a central role.
%
%
%
%
A classical example  is the Williams domain instability. Visually resembling  the Rayleigh-Bénard instability in isotropic fluids, 
 it
can produce a variety of  flow geometries depending on the angle between the wave vector and the director field,  including normal rolls, oblique rolls, and more complex structures \cite{H2022, Hirata2022, r1973}.
Additionally, some recent     works  on stochastic nematic liquid crystal systems also employ this SEL modeling framework where noise is introduced in the velocity component; see, for example, \cite{guo2019, qiu2022}. 
%
 }

%

 The question of whether fluid dynamics equations such as  3D Navier-Stokes equations develop finite-time singularities has been a well-known open problem for  a long time;   to take into account   these singularities,   it becomes necessary to consider weak solutions,   which  live in a larger function space than  that of continuous or differentiable functions. 
However,  issues may arise with
weak solutions,  such as    non-uniqueness and non-physical properties (see e.g. \cite{albritton2022non,   Buckmaster2019}),  due to    insufficient regularity. 
%
 %
 %
  With this motivation,  the concept of  suitable weak solutions   which  satisfy a local energy inequality is introduced  (for the first time in \cite{leray1934mouvement}).
Subsequently, a natural question arises: how regular can these solutions be?  
%
%
Various results have been  found for the Navier--Stokes equation,  as in \cite{scheffer1976partial,  scheffer1977hausdorff},  where suitable weak solutions are  shown to   be smooth except for a singular set of parabolic 
 {Hausdorff}
 dimension $5/3$.  Later on in   \cite{Caffarelli},  this result was   improved;  the dimension of this singular set is demonstrated to be no greater than 1. 
Since then,    numerous  works have been carried out to explore   partial regularity in the context of  general fluid dynamics equations,  see e.g.  \cite{constantin2001some, guo2002partial,  hyder2022partial}.  Notably,   in \cite{lin1995partial},     a partial regularity  result for the  deterministic Ericksen--Leslie 
 {equations} 
(\ref{eqn:SEL0}) in space-dimension three was achieved. 

In contrast,  there are relatively  limited findings regarding the  partial regularity for stochastic PDEs.
%
%
%
  To the best of our knowledge,   such investigations have primarily focused on the stochastic Navier-Stokes equations; notably,   in \cite{Flandoli2002,   Romito2010},   it was established that in 3D the same partial regulairy result   as in \cite{Caffarelli} holds for martingale suitable weak solutions for almost surely every random path. Building upon the  insights of \cite{Flandoli2002, Romito2010},  here in this article we establish the  same type of partial regularity result  for  SEL in    space dimension three.
As far as we know, there has been few  reported results  about the regularity of 
the SEL equations in space dimension three,  even with $\epsilon $ being fixed (\cite{Brzezniak2019, Brzezniak2020a}),  despite a  growing interest in this topic recently. In  space dimension two,  the well-posedness results concerning the  SEL equations
have been founded in   \cite{DeBouard2021} (with additive noise)   and   \cite{wanglidan2023} (with multiplicative noise), 
and for further details  see e.g. \cite{Brzezniak2020,   Medjo2019}.

The article is organized as follows. 
In Section \ref{sec:math},  we specify notations and stochastic framework needed to set up the SEL system.  Then in Section \ref{sec:stoc} we 
introduce the notion of suitable weak solutions, 
drawing inspirations 
from a split-up scheme  which is introduced in
\cite{Flandoli2002,  Romito2010}   
in the context of the
stochastic Navier--Stokes equation.
Specifically,   this approach involves splitting
solutions to the SEL equations
into two components.  The first part is  
governed by 
linear stochastic Stokes equations,  and the second part is subject to the remaining nonlinear coupled system. For   the first component of the solution, we  directly obtain a desired bound, whenever    a 
%
 the noise term is bounded in a suitable fractional Sobolev space almost surely  (Section \ref{sec:def}).
 Subsequently over each random path that satisfies this assumption,  we define   deterministic suitable weak solutions which 
satisfy 
two local energy inequalities  (Section \ref{sec:def}).    Thus  we come up with the notion of  a martingale suitable weak solution  as a martingale solution  each   trajectory of which  is almost surely   a deterministic suitable weak solution. Next in Section \ref{sec:main},  the main result  of the article is  stated, namely, the global    existence and  partial regularity of martingale suitable weak solutions. These two claims are proved in Sections \ref{sec:exis} and \ref{sec:partial},  respectively. 
%
For the proof of the global
existence
 (Section \ref{sec:exis}), 
it is worth noting that the main challenge is to construct  an  approximation  such that the local energy inequality  is preserved after passage to the limit.  Here we 
employ a 
specific
 type of approximation by regularization, which was introduced in \cite{leray1934mouvement} for the Navier--Stokes equations. In \cite{Caffarelli},  a similar approximation is adopted through a  'retarded mollification' to establish the existence of suitable weak solutions for the Navier--Stokes equations.

\section{Mathematical settings and notations}
\label{sec:math}

\subsection{Functional spaces}
\label{sec:not}
We introduce two fundamental   functional spaces that take into account the incompressibility condition \eqref{eqn:SEL0}$_2$ (cf. \cite{Temam1}):
$$H:=\{\uu\in L^2(\T^3; \R^3)|\div \uu=0\},$$
and 
$$V:=\{\uu\in H^1(\T^3; \R^3)|\div \uu=0\}.$$
The operator $A:D(A)\subset H\to H$ is defined as $Au=-P_L\Delta \uu$, where $P_L$ is the Helmholtz--Leray projection from $L^2(\T^3; \R^3)$ onto $H$ and $D(A)=H^2(\T^3; \R^3)\cap V$. It is well-known that $A^{-1}$ is well-defined from $H$ to $D(A)$. It is also self-adjoint. By the spectral theory of compact self-adjoint operators in a Hilbert space,  there exists a sequence of eigenvalues  $0<\lambda_1\le \lambda_2\le \cdots\le \lambda_m\le \cdots$, $\lambda_m\to \infty$ as $m\to \infty$ and an orthonormal basis $\{e_i\}_{i=1}^\infty$ such that $Ae_i=\lambda_i e_i$.  
The fractional power $A^\alpha$ of $A$, $\alpha\ge0$, 
is
 simply defined by 
$$A^\alpha \uu=\sum_{i=1}^{\infty}\lambda_i^\alpha e_i ( \uu, e_i)_{L^2(\T^3)} , $$
with 
the 
domain 
$$D(A^\alpha)=\left\{\uu\in H\Big|\|\uu\|_{D(A^\alpha)}^2=\sum_{i=1}^{\infty}\lambda^{2\alpha}_i( \uu, e_i)_{L^2(\T^3)}^2 =\|A^\alpha \uu\|_{L^2(\T^3)}^2<\infty\right\}.$$
We also
 note that $D(A^\alpha)\subset H^{2\alpha}(\T^3; \R^3)$. For the definition of $H^{2\alpha}(\T^3;\R^3)$, we refer to \cite[Definition 1.10]{Robinson2016NS}.

\subsection{Stochastic framework}\label{sec:sto}
In order to define the noise term,  we first recall some basic concepts and notations of probability and stochastic processes.  For more details see for instance 
\cite{ZabczykDaPrato1,Flandoli1, FlandoliGatarek1, Flandoli2002,    pezat2007, Romito2010}.

We fix a probability space $(\Omega, \mathcal F, \mathbb P)$,  where $\Omega$ is a sample space equipped with a $\sigma$-algebra $\mathcal F$, and a probability measure $\mathbb P$. Then we define a
stochastic basis \begin{equation}\label{stochasticbasis}
\mathfrak{B}:= \left( \Omega, \mathcal F, \{\mathcal {F}_t \}_{t\geq 0}, \mathbb P,   \left (W(t) \right)_{t\geq 0} \right),
\end{equation}
which is a filtered probability space with a Brownian motion $ \left (W(t) \right)_{t\geq 0}  $    adapted to the filtration  $\{\mathcal {F}_t \}_{t\geq 0}$. In order to avoid unnecessary complications below we may assume that $\{\mathcal {F}_t \}_{t\geq 0}$ is complete and right continuous (see \cite{pezat2007}). 
We then assume that 
\begin{equation}\label{eq:assum4}
\mbox{$\left (W(t) \right)_{t\geq 0}$
  takes value in
 $D(A^\delta)$ for some $\delta>0$ with a covariance operator $\mathcal{L}$.}
\end{equation}
Here $\mathcal{L}\in \mathcal{B}(D(A^\delta))$ and 
is non-negative and of trace class. Let $\{b_i\}_{i\geq 1}$ be the orthonormal basis of $D(A^\delta)$ consisting of eigenvectors of $\mathcal{L}$ subject to non-negative eigenvalues $\{\gamma_i\}_{i\geq 1}$. Then we have the following expansion (cf. \cite[Theorem 4.20]{pezat2007})
$$W(t)=\sum_i W_i(t)b_i,
$$
where the real-valued
Brownian motions
$$W_i(t)=(W(t), b_i)_{D(A^\alpha)}$$
are independent and their covariances are
$$\mathbb{E}[W_i(t)W_i(s)]=(t\wedge s)\gamma_i.$$
Moreover, by Kolmogorov's continuity criterion (see for instance \cite{stroock1979}), we have that for every $\beta  \in (0,  1/2)$, it holds that 
\begin{equation}\label{eqn:assum2}
W(t) \in  C^{\beta} \left (  [0, T] ;  D (A^\delta) \right) \quad \mathbb P- a.s..
\end{equation}
 {For the purpose of obtaining a key preliminary estimation later on (see Lemma \ref{lemma:boundz} in Section \ref{sec:def}), we will need to further assume that 
$\delta\geq\frac{3}{4}$. 
So
this assumption
 together with (\ref{eq:assum4}) 
yields
the assumption that we require on the noise: 
\begin{equation}\label{eq:assum5}
\mbox{$\left (W(t) \right)_{t\geq 0}$
  takes value in
 $D(A^\delta)$ for some $\delta>3/4$ with a covariance operator $\mathcal{L}$.}
\end{equation}}

\section{Deterministic and martingale suitable weak solutions}
\label{sec:stoc}

In order to define  suitable weak solutions that are deterministic and are martingale,     we follow the approach of \cite{Flandoli2002,  Romito2010},  and  split the SEL   into the sum of a linear stochastic Stokes equation  and a  nonlinear modified Ericksen-Leslie equation.

\subsection{A split-up scheme of the SEL}

We will assume throughout the paper the following assumptions 
on
the initial data
\begin{equation}
  \begin{aligned}
  &({\uu}_0, {\dd}_0)\in H\times H^1(\T^3;\R^3),\quad  |{\dd}_0(x)|=1 \text{ for all }x\in \T^3. 
\label{eqn:assum}
  \end{aligned}
\end{equation} 
As a preparation to introduce notions of deterministic and stochastic suitable
weak martingale  solutions,  we will first split the solutions ${\uu}$ and $P$
properly. 
%
We introduce new variables ${\uu}={\uone}+{\utwo}$, $P={\pone}+{\ptwo}$,  where $({\uone}, {\pone})$ solves the linear stochastic Stokes system as follows
\begin{equation}
  \left\{
    \begin{array}{l} {\rd} {\uone}-\Delta {\uone} {\rd}t +\nabla {\pone} {\rd}t ={\rd} W  , \\
      \div {\uone}=0, \\
      {\uone}(0)=0.
    \end{array}
    \right.
    \label{eqn:SELu1}
\end{equation}
The triple $({\utwo}, {\ptwo}, {\dd})$ solves the modified Ericksen--Leslie equations as follows
\begin{equation}
\begin{cases}
   &   \pa_t {\utwo}-\Delta {\utwo}+({\uone}+{\utwo})\cdot\nabla ({\uone}+{\utwo})+\nabla {\ptwo}\\
&\quad \quad \quad \quad \quad  = -\div\left( \nabla {\dd}\odot \nabla {\dd}-\frac{1}{2}|\nabla {\dd}|^2 {\rm I}_3-F({\dd}){\rm I}_3 \right), \\
   &   \div {\utwo}=0, \\
     & \pa_t {\dd}-\Delta {\dd}+({\uone}+{\utwo})\cdot \nabla {\dd}=-f({\dd}), \\
   &   {\utwo}(0)={\uu}_0, {\dd}(0)={\dd}_0. 
\end{cases}
    \label{eqn:SELu2}
\end{equation}
\begin{remark}\label{rem1}
	Thanks to the incompressibility condition \eqref{eqn:SELu2}$_2$,  the 
 right-hand side of \eqref{eqn:SELu2}$_1$ is is equivalent to the deterministic term on the right-hand side of
\eqref{eqn:SEL0}$_1$. 
 Specifically, for the extra term, we observe:
\begin{equation*}
\displaystyle\div\left(\frac{1}{2}|\nabla {\dd}|^2{\rm I}_3 +F({\dd}){\rm I}_3\right)=\nabla\left(\frac{1}{2}|\nabla {\dd}|^2+F({\dd})\right),
\end{equation*}
 which can be treated as part of the pressure gradient.
Moreover, this observation,  together with direct computations,  yields
 \begin{equation}
     \label{eqn:RHSu}
     -\div\left( \nabla {\dd}\odot \nabla {\dd}-\frac{1}{2}|\nabla {\dd}|^2 {\rm I}_3-F({\dd}){\rm I}_3 \right)=-\nabla \dd\cdot (\Delta \dd-f(\dd)).
 \end{equation}
 {As a result, 
%
for computational convenience, we shall use the right-hand-side of the above equation to replace the    right-hand side of of \eqref{eqn:SELu2}$_1$.
}
\end{remark}


\subsection{Definitions of suitable weak solutions}
\label{sec:def}


 In order to  define what it means for the modified Ericksen-Leslie equation  (\ref{eqn:SELu2}) to have a weak solution,  we need  a preliminary estimation of the solution to the linear stochastic Stokes system \eqref{eqn:SELu1}. We first derive a   result that ensures the boundedness of ${\uone} (t)$, 
%
the stochastic process that solves \eqref{eqn:SELu1}. 
To achieve  this goal, we   need to 
 {utilize}
 the following result, which is also in \cite{Flandoli2002} (see  \cite[Lemma 2.2]{Flandoli2002}). 
\begin{lemma}[{An  almost-sure bound of the linear stochastic Stokes system}]
\label{lemma:boundz}
 {With the assumption on the noise as in \eqref{eq:assum5}, we have }
%
%
\begin{equation}
\label{eqn:zregu}	
	{\uone}(\omega)\in  L_{\rm loc}^\infty(\mathbb{T}^3\times[0,\infty)) \mbox{ for $  \mathbb P$- a.s. $\omega$}.
\end{equation} 
\end{lemma}
The proof of Lemma \ref{lemma:boundz} is provided in Appendix \ref{secA1}. 
 %
%
%
}

With Lemma \ref{lemma:boundz} in hand, 
now we are ready to introduce the  notions of deterministic and martingale suitable weak solutions to SEL. 
\begin{definition}\label{def:weaksolution}
 Assume that $T>0$ and fix $\omega\in \Omega$ such that $\uone(\omega)$ satisfies
 {the bound specified in} \eqref{eqn:zregu}. Then 
we say $({\utwo}, {\dd})$ is a weak solution to (\ref{eqn:SELu2}) in the sense of distribution on 
\begin{equation*}
Q_T:=\T^3\times(0, T],
\end{equation*}
 subject to $ {\utwo}(0)={\uu}_0, {\dd}(0)={\dd}_0$, 
%
{if}   for any 
%
%
{$\phi_1 \in  C^\infty (\T^3, \R^3)$}, 
$\div\phi_1 =0$, 
%
%
{$\phi_2 \in C^\infty (\T^3, \R^3)$},
and $t>0$,  the following weak formulation of (\ref{eqn:SELu2})
is satisfied:
\begin{equation*}
    \begin{aligned}  
   \int_{\T^3 \times{\{t\}}}     {\utwo} \cdot\phi_1 - &\int_0 ^t\int _{\T^3}  {\utwo}\cdot\Delta \phi_1+({\uone}+{\utwo})\otimes({\uone}+{\utwo}) : \nabla \phi_1 \\
  & =
   \int _{\T^3} \uu_0 \cdot\phi_1  
    + \int_0 ^t\int _{\T^3}     \left( \nabla {\dd}\odot \nabla {\dd}-\frac{1}{2}|\nabla {\dd}|^2 {\rm I}_3-F({\dd}){\rm I}_3 \right):\nabla \phi_1, \\
   \int_{\T^3 \times{\{t\}}}     {\dd} \cdot\phi_2 -  &\int_0 ^t\int _{\T^3}  {\dd}\cdot\Delta \phi_2+({\uone}+{\utwo})\otimes \dd  : \nabla \phi_2  
    =
    \int _{\T^3} \dd_0\cdot \phi_2 
    - \int_0 ^t\int _{\T^3}  f(\dd)   \cdot  \phi_2.
    \end{aligned}
\end{equation*}
\end{definition}


\begin{definition}\label{def:suitweak}
Assume that $T>0$ and fix $\omega\in \Omega$ such that $\uone(\omega)$ satisfies
 {the bound specified in} 
 \eqref{eqn:zregu}. Then a suitable weak solution to \eqref{eqn:SEL0} is a triple $({\uu}, P, {\dd})$ such that if ${\utwo}={\uu}-{\uone}$ and ${\ptwo}=P-{\pone}$ where $({\uone}, {\pone})$ is the solution to \eqref{eqn:SELu1}, then 
  \begin{enumerate}
    \item $({\utwo}, {\dd})$ is weakly continuous with respect to time, 
    \item ${\utwo}\in L^\infty(0, T; H)\cap L^2(0, T; V)$, ${\dd}\in L^\infty(0, T; H^1(\T^3; \R^3))\cap L^2(0, T; H^1(\T^3; \R^3))$ and ${\ptwo}\in L^{5/3}(0,T; L^{5/3}(\T^3;\R)),$
   \item $({\utwo}, {\dd})$ is a weak solution to (\ref{eqn:SELu2}) in the sense of distribution on $Q_T$   as specified in Def. \ref{def:weaksolution},
    \item for all $t\le T$ and almost every $s<t$, we have
\begin{equation} \label{eq:ine1}
      \begin{aligned}
       & \int_{\T^3\times\{t\}}\left(\frac{1}{2}|{\utwo}|^2+\frac{1}{2}|\nabla {\dd}|^2+F({\dd})\right)+\int_s^t\int_{\T^3} |\nabla {\utwo}|^2+|\Delta {\dd}-f({\dd})|^2 \\
       \le  &\int_{\T^3\times\{s\}}\left( \frac{1}{2}|{\utwo}|^2+\frac{1}{2}|\nabla {\dd}|^2+F({\dd}) \right)\\
        &+\int_{s}^{t}\int_{\T^3}{\uone}\cdot [({\uone}+{\utwo})\cdot \nabla {\utwo}]+({\uone}\cdot \nabla){\dd}\cdot (\Delta {\dd}-f({\dd})).
      \end{aligned}  
\end{equation}
 \item for any $\varphi\in C_0^\infty(Q_T)$ ({we note   that the subscript “0”    refers to compact support in the corresponding domain}),  $\varphi\ge 0$, we have
\begin{equation}\label{eq:ine2}
      \begin{aligned}
        &\int_{\T^3\times\{T\}}\left( \frac{|{\utwo}|^2}{2}+\frac{|\nabla {\dd}|^2}{2}+F({\dd}) \right)+\int_{Q_T}(|\nabla {\utwo}|^2+|\Delta {\dd}|^2+|f({\dd})|^2) \varphi\\
       \le & \int_{Q_T}\left( \frac{|{\utwo}|^2}{2}+\frac{|\nabla {\dd}|^2}{2}+F({\dd}) \right)\pa_t \varphi+\int_{Q_T}[({\uone}+{\utwo})\cdot \nabla \utwo]\cdot \uone\varphi\\
        &+\int_{Q_T}\left(\frac{|{\utwo}|^2}{2}+\utwo\cdot \uone\right)[({\uone}+{\utwo})\cdot \nabla\varphi]+{\ptwo}{\utwo}\cdot \nabla\varphi+\left( \frac{|{\utwo}|^2}{2}+\frac{|\nabla {\dd}|^2}{2} \right)\Delta \varphi\\
        &+\int_{Q_T}\left( \nabla {\dd}\odot \nabla {\dd}-\frac{1}{2}|\nabla \dd|^2 {\rm I}_3 \right):\nabla^2 \varphi\\
        &+\int_{Q_T}({\uone}\cdot \nabla {\dd})\cdot (\Delta {\dd}-f({\dd}))\varphi+[({\uone}+{\utwo})\cdot \nabla {\dd}]\cdot (\nabla \varphi\cdot \nabla){\dd}\\
        &-\int_{Q_T}f({\dd})\cdot (\nabla \varphi\cdot \nabla){\dd}-2\int_{Q_T}\nabla f({\dd}):\nabla {\dd}\varphi.
      \end{aligned}
\end{equation}
  \end{enumerate}
\end{definition}
%
\begin{definition}\label{def:mar}
A martingale suitable weak solution to \eqref{eqn:SEL0} is a quadruple  $(\tilde {\mathfrak{B}} , \tilde {\uu},  \tilde P,  \tilde {\dd} )$,  
where $ \tilde {\mathfrak{B}}$ is a stochastic basis
\begin{equation}\label{eq:sb}
  \tilde {\mathfrak{B}}= ( \tilde\Omega, \tilde{ \mathcal F}, \{\tilde {\mathcal {F}}_t \}_{t\geq 0},  \tilde {\mathbb P},   (\td W(t))_{t \geq 0}  ), 
\end{equation}
  and  $(\td W (t) )_{t \geq 0}$ is a Brownian motion adapted to the filtration $\{\tilde {\mathcal {F}}_t \}_{t\geq 0}$    with values in $D(A^\delta)$ and covariance operator $\mathcal{L}$,  such that 
$$ (\tilde {\uu} (\cdot),  \tilde P (\cdot), \tilde {\dd}(\cdot)          ): \tilde  \Omega \times [0, \infty) \rightarrow  H\times \R\times H^1(\T^3;\R^3)  $$
 is a  $  \tilde {\mathcal {F}}_t  $  adapted process,  and 
$$\tilde \omega\in  \tilde \Omega\to (\tilde {\uu}(\tilde \omega), \tilde P(\tilde \omega),\tilde  {\dd}(\tilde \omega))\in L^2(0, T; H)\times L^{5/3}(Q_T)\times L^2(0, T; H^1(\T^3;\R^3))$$
  is a measurable mapping,   and  there exists a set $\tilde{\Omega}_0\subset \tilde {\Omega}$ of full probability  such that on this set 
 the triple  $(\tilde {\uu}(\cdot , \tilde \omega), \tilde P(\cdot , \tilde \omega), \tilde {\dd}(\cdot ,  \tilde \omega))$ is a suitable weak solution in the sense of Definition \ref{def:suitweak}.
\label{def:marting}
\end{definition}
\subsection{Main results: existence and partial regularity}
\label{sec:main}
To state the main result of our article  we introduce the notion of singular points for a suitable weak solution.
\begin{definition}
  We call a point $z=(x,t)\in \T^3\times(0, \infty)$   \emph{regular} if there exists a neighborhood of $z$ where $({\utwo}, \nabla {\dd})$ is essentially bounded. The other points will be called \emph{singular}. The set of singular points will be denoted by $\Sigma$.
\end{definition}

Our main result is as follows. 
\begin{theorem}\label{thm:main2}With the assumptions \eqref{eqn:assum},  
suppose furthermore that the Brownian motion $\left (  W (t) \right)_{t \geq 0}$ takes value in
 $D(A^{\frac{1}{4}+\delta})$ for some $\delta>0$. 
Then there exists a martingale suitable weak solution  to \eqref{eqn:SEL0} globally in time such that with full probability, 
  $$\mathcal{P}^1(\Sigma)=0,$$ 
  where $\mathcal{P}^k$, $0\le k\le 4$,  denotes the $k$-dimensional Hausdorff measure on $\T^3\times \R_+$ with respect to the parabolic distance 
  $\delta_p( (x, t), (y, s)):=\max\{|x-y|, \sqrt{|t-s|}\},  \forall (x, t), (y, s)\in \T^3\times\R_+. $
\end{theorem}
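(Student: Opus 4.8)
The plan is to follow the now-classical two-step strategy for partial regularity of Navier--Stokes-type systems (Caffarelli--Kohn--Nirenberg, as adapted to the stochastic setting by Flandoli--Romito), combined with the blow-up machinery for the heat flow of harmonic maps. Concretely, I would first construct a \emph{martingale suitable weak solution} via a compactness/tightness argument on a suitable Galerkin-type or Ginzburg--Landau-type approximation; the delicate point here is that the approximation must be \emph{local-energy preserving}, i.e. it must pass to the limit not merely in the global energy inequality but in the localized one appearing in item~(5) of Definition~\ref{def:suitweak}. This is where the splitting $\uu = \uone + \utwo$ pays off: $\uone$ solves the \emph{linear} stochastic Stokes system \eqref{eqn:SELu1} and, by Lemma~\ref{lemma:boundz} together with the assumption $W \in D(A^{1/4+\delta})$, the path $\uone(\omega)$ is locally bounded on $\T^3\times[0,\infty)$ almost surely. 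Hence, once we fix such an $\omega$, the pair $(\utwo, \dd)$ solves a \emph{deterministic} system with a bounded drift coefficient $\uone$, and the randomness no longer interferes with the local regularity analysis.

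The second step is the deterministic partial regularity statement: for a.e.\ fixed $\omega \in \tilde\Omega_0$, the singular set $\Sigma$ of $(\utwo, \nabla\dd)$ has vanishing one-dimensional parabolic Hausdorff measure. I would prove this through two $\varepsilon$-regularity lemmas obtained by a blow-up/compactness contradiction argument. First, a \emph{smallness criterion}: there is a universal $\varepsilon_0 > 0$ such that if the scale-invariant quantity
\begin{equation*}
\frac{1}{r^2}\int_{Q_r(z_0)} \left( |\utwo|^3 + |\nabla\dd|^3 + |\ptwo|^{3/2} \right) \rd x\, \rd t + (\text{a term measuring } \uone) \;\le\; \varepsilon_0,
\end{equation*}
then $z_0$ is a regular point; and second, the sharper criterion that regularity already follows from the smallness of $\limsup_{r\to 0} \frac{1}{r}\int_{Q_r(z_0)} (|\nabla\utwo|^2 + |\nabla^2\dd|^2 + |\nabla\dd|^4)$. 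The passage from the first to the second criterion uses the local energy inequality to absorb the pressure and the lower-order terms. Given the sharper criterion, the set $\Sigma$ is contained in the set where $\limsup_{r\to 0} r^{-1}\int_{Q_r(z)} (|\nabla\utwo|^2 + |\nabla^2\dd|^2) > \varepsilon_0$, and a standard Vitali covering argument applied to the (finite) dissipation measure $\mu = (|\nabla\utwo|^2 + |\nabla^2\dd|^2 + |f(\dd)|^2)\, \rd x\, \rd t$ — which is a finite measure because $\utwo \in L^2(0,T;V)$ and $\Delta\dd - f(\dd) \in L^2$ by item~(4) — yields $\mathcal{P}^1(\Sigma) = 0$. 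The nonlinear $\dd$-terms (the Ericksen stress $\nabla\dd \odot \nabla\dd$ and the coupling $\nabla\dd \cdot (\Delta\dd - f(\dd))$) are handled exactly as in the deterministic Ericksen--Leslie partial regularity theory of Lin--Liu, using that $F(\dd)$ and $f(\dd)$ are polynomially bounded and that $|\dd|$ stays controlled.

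The main obstacle I anticipate is \textbf{the existence step with the localized energy inequality}, not the blow-up argument. Proving the global martingale weak solution is by now routine (stochastic compactness, Skorokhod or Jakubowski representation, Prokhorov), but extracting the \emph{suitable} local inequality (item~(5)) in the limit requires strong convergence of $\utwo$ in $L^3_{\mathrm{loc}}$ and of $\nabla\dd$ in $L^3_{\mathrm{loc}}$, the pressure estimate $\ptwo \in L^{5/3}(Q_T)$ (obtained from the equation \eqref{eqn:SELu2}$_1$ via Calderón--Zygmund theory, using the bound on $\uone$), and a careful choice of approximation in which the localized energy identity holds \emph{at the approximate level} with an error that vanishes. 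The identity \eqref{eqn:RHSu}, rewriting the Ericksen stress divergence as $-\nabla\dd\cdot(\Delta\dd - f(\dd))$, is what makes the $\dd$-contribution to the local energy inequality tractable; one must check that the Ginzburg--Landau regularization (or whatever approximation is used) is compatible with this algebraic structure. A secondary technical point is verifying that the exceptional $\mathbb{P}$-null set in Definition~\ref{def:marting} can be taken independent of the test function $\varphi$ and of $t$, which follows by taking countable dense families and using the weak-in-time continuity from item~(1).
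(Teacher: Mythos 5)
Your proposal matches the paper's strategy essentially point for point: split $\uu=\uone+\utwo$ with $\uone$ the pathwise-bounded stochastic Stokes part, construct martingale suitable weak solutions via a tightness/Skorokhod argument on a local-energy-preserving approximation, prove an $\varepsilon$-regularity lemma by a blow-up/compactness contradiction, upgrade it via the local energy inequality to the criterion $\limsup_{r\to 0}r^{-1}\int_{Q_r(z)}(|\nabla\utwo|^2+|\nabla^2\dd|^2)\le\varepsilon_1^2$, and finish with a Vitali covering of $\Sigma$ by the finite dissipation measure. The one spot where you should commit rather than hedge is the approximation: a pure Galerkin truncation does not generally preserve the \emph{local} energy identity (the projection spoils the pointwise cancellations needed to test against $\utwo\varphi$), and the paper uses a Leray/CKN-style mollification $\Phi_\sigma$ of the transport terms precisely for this reason; also, the extra $|\nabla\dd|^4$ in your sharper criterion is unnecessary and is not controlled by the dissipation measure you cover with, so it should be dropped as in the paper.
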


\section{Existence of martingale suitable weak solutions}
\label{sec:exis}
In this section, we will prove  the first part of the conclusion of Theorem \ref{thm:main2}, that is, 
  existence of martingale suitable weak solutions to \eqref{eqn:SEL0} in the sense of Definition \ref{def:mar}.
\subsection{Energy-inequality-preserving  approximation}

We begin the proof of existence of 
martingale suitable weak solutions by   introducing a system that converges to the SEL while preserving relevant energy inequalities. 

Given ${\uone}(\omega)$ as in Lemma \ref{lemma:boundz}, we introduce the following 
%
{approximating equations}
  to \eqref{eqn:SELu2}: 
\begin{equation}
\begin{cases}
  &  \pa_t {\utwo}^\sigma-\Delta {\utwo}^\sigma+({\uone}+\Phi_\sigma[{\utwo}^\sigma])\cdot \nabla({\uone}+{\utwo}^\sigma)+\nabla {\ptwo}^\sigma\\
&=-\nabla \Phi_\sigma[{\dd}^
    \sigma]\cdot (\Delta {\dd}^\sigma-f({\dd}^\sigma)), \\
  &  \nabla\cdot {\utwo}^\sigma=0, \\
  &  \pa_t {\dd}^\sigma-({\uone}+{\utwo}^\sigma)\cdot \nabla \Phi_\sigma[{\dd}^\sigma]=\Delta {\dd}^\sigma-f({\dd}^\sigma),
\end{cases}
  \label{eqn:appro}
\end{equation}
where $\sigma>0$ is the approximation parameter and $\Phi_\sigma[f]$ denotes the
smooth mollification of a function $f$. More precisely, suppose that 
 {$\psi\in C_0^\infty(\R^3)$}
is a standard mollifier 
{supported in    $B_1(0)$},
and let $\psi_\sigma:=\sigma^{-3}\psi(x/\sigma)$. Then we define $\Phi_\sigma[f]$ via the following convolution
$$\Phi_\sigma[f]=\int_{\R^3}\psi_\sigma(x-y)f(y)\rd y,$$ where we extend $f$ periodically to the whole $\R^3$. It is easy to show that 
\begin{equation*}
	\|\Phi_{\sigma}[f]\|_{L^p(\T^3)}\le \|f\|_{L^p(\T^3)}, \qquad \lim_{\sigma\to 0}\|\Phi_{\sigma}[f]-f\|_{L^p(\T^3)}=0.
\end{equation*}
As the key feature of this type of approximations is that   global and local energy inequalities are preserved,  it is also  
  applicable to more complicated hydrodynamic equations (see,  for instance, \cite{Du2020,  du2021}). 
 {Moreover, we would like to point out that, for the Ericksen stress tensor term, we have employed the same observation as discussed in  Remark \ref{rem1}.}

We note that \eqref{eqn:appro} has
better regularity in nonlinear terms, and one could obtain the
existence of solutions  {globally strong in the PDE sense} via a similar argument using the Galerkin method and Banach fixed point theorem as in \cite{Flandoli2002} and \cite{Romito2010}. 
 {More precisely, for each fixed $\sigma$, we have
\begin{align} \label{eq:smooth}
\begin{cases}
&{\utwo}^\sigma \in L^2 (0, T; H^2 (\T^3)) \cap L^ \infty (0, T; V), \quad  \pa_t {\utwo}^\sigma \in L^2 (0, T; H),\\
 &{{\dd}}^\sigma \in L^2 (0, T; H^2 (\T^3)) \cap L^ \infty (0, T; V), \quad  \pa_t {{\dd}}^\sigma \in L^2 (0, T; H)  ,  \\
&{\ptwo}^\sigma \in L^2 (0, T;   H^1 (\T^3)). 
\end{cases}
\end{align}
%
Moreover, we note that 
{subtracting any constant from $\pi^\sigma$  yields again a solution to \eqref{eqn:appro}}. 
%
%
%
So
 by subtracting    ${\ptwo}^\sigma $ with its mean, $\int_{\T^3}{\ptwo}^\sigma $, we will obtain a function  with zero mean; we will choose this function as the solution. Thus,  without loss of generality,  from now on we will  assume that 
\begin{equation}\label{eq:mean0}
\int_{\T^3}{\ptwo}^\sigma =0. 
\end{equation}
}
 

\subsubsection{Pathwise estimates independent of the approximation parameter}
{We  can derive the following estimates  on $ ({\utwo}^\sigma,    {\ptwo}^\sigma,  {\dd}^\sigma )$ independent of $\sigma$,  {with a fixed   arbitrary   $\omega\in \Omega$ such that $\uone(\omega)$ satisfies
 {the bound specified in} 
 \eqref{eqn:zregu}}; these estimates will be used to achieve the compactness argument later on (see Section \ref{sec:comp}).}
\begin{lemma}\label{lem:estimates}
{[Energy estimates independent of $\sigma$]}
 Fix $\omega \in \Omega$ such that  ${\uone}(\omega)\in  L_{\rm loc}^\infty(\mathbb{T}^3\times[0,\infty))$. Consider the triple $({\utwo}^\sigma(\omega), {\ptwo}^\sigma(\omega),  {\dd}^\sigma(\omega))$,  a classical strong (in the PDE sense) solution   to \eqref{eqn:appro}, then we have the following estimates:
\begin{equation}\displaystyle
 \begin{cases}
&
{\|{\utwo}^\sigma \|_{L ^\infty (0, T; H)}^2+\|{\dd}^\sigma \|_{L^ \infty (0, T; H^1(\T^3))}^2 
+ \|\nabla {\utwo}^\sigma \|_{L^2(Q_T )}^2+\|\nabla^2 {\dd}^\sigma \|_{L^2(Q_T)}^2  
}
 \\
&\le   \Psi(T), \\
  & \|{\ptwo}^\sigma\|_{L^{\frac{5}{3}}(Q_T)}  
  \le C\Psi(T) +  {C   \|{\uone}\|_{L^{\infty}(Q_T)}^2}   ,   \\
  &\|\pa_t {\utwo}^\sigma\|_{L^2(0, T; D(A^{-1}))+L^2(0, T; H^{-1}(\T^3))+L^{\frac{5}{4}}(Q_T)} 
\le    C(
  \sqrt{\Psi(T)}
  +\Psi(T)), \\
   & \|\pa_t \dd^\sigma\|_{L^{\frac{5}{3}}(Q_T)} 
    \le C(\Psi(T)+T^{\frac{1}{10}}
    \sqrt{\Psi(T)}),    \\
 & |\Delta {\dd}^\sigma-f({\dd}^\sigma)|^2  _{L^2(Q_T )} \leq C\Psi(T)   , 
%
\end{cases}  \label{eqn:lemmaa}
\end{equation}
where $C$ is a constant that is independent of $\sigma$, and 
 \begin{equation}
\begin{aligned}
\Psi(T)  =  & C\left(\|\uu_0\|_{H}^2+\|\dd_0\|_{H^1(\T^3)}^2+ \|{\uone}(t)\|_{L^4(Q_T)}^4  \right) e^{C\int_0^T  {\frac{1}{2}}+ \|\uone(t)\|_{L^4(\T^3)}^4 \rd t}. 
 \label{eqn:GronEstp}
\end{aligned}
\end{equation}
 \end{lemma} 
%
\begin{proof}
{
The proof of this lemma is based on deriving suitable energy estimates. Due to the length of the derivation of the global energy equality, we have placed it in Appendix~\ref{secA2}. In particular, all of the estimates stated above will be derived from equation~\eqref{eqn:Engeqp} in Corollary~\ref{remark:energy}.}

{Firstly, we deal with the  two terms on the right-hand-side (RHS) of \eqref{eqn:Engeqp} one by one.}
For the first term we have the following estimation:
\begin{align*}
  &\left|\int_{\T^3}[({\uone}+\Phi_\sigma[{\utwo}^\sigma])\cdot\nabla {\utwo}^\sigma]\cdot {\uone} {\rd x}\right|\\
  \le&\int_{\T^3}|{\uone}| |\nabla {\utwo}^\sigma||{\uone}|+|\Phi_\sigma[{\utwo}^\sigma]||\nabla {\utwo}^\sigma||{\uone}|\\
 \le & \|{\uone}\|_{L^4(\T^3)}\|{\uone}\|_{L^4(\T^3)}\|\nabla {\utwo}^\sigma\|_{L^2(\T^3)}+\|\Phi_\sigma[{\utwo}^\sigma]\|_{L^4(\T^3)}\|\nabla {\utwo}^\sigma\|_{L^2(\T^3)}\|{\uone}\|_{L^4(\T^3)}\\
 \le & \frac{1}{4}\|\nabla {\utwo}^\sigma\|_{L^2(\T^3)}^2 +C\left( \|{\uone}\|_{L^4(\T^3)}^4+\|{\uone}\|_{L^4(\T^3)}^2 \|{\utwo}^\sigma\|_{L^4(\T^3)}^2 \right)\\
\le  &  \frac{1}{4}\|\nabla {\utwo}^\sigma\|_{L^2(\T^3)}^2 + C \left( \|{\uone}\|_{L^4(\T^3)}^4+  \|{\uone}\|_{L^4(\T^3)}^2 \| {\utwo}^\sigma\|_{  {H^1(\T^3)}}^{\frac{3}{2}}\|{\utwo}^\sigma\|_{L^2(\T^3)}^{\frac{1}{2}}\right)\\
\le  &  \frac{1}{4}\|\nabla {\utwo}^\sigma\|_{L^2(\T^3)}^2 + C \left( \|{\uone}\|_{L^4(\T^3)}^4+  \|{\uone}\|_{L^4(\T^3)}^2 \|\nabla {\utwo}^\sigma\|_{L^2(\T^3)}^{\frac{3}{2}}\|{\utwo}^\sigma\|_{L^2(\T^3)}^{\frac{1}{2}}
+  { \|{\uone}\|_{L^4(\T^3)}^2 \| {\utwo}^\sigma\|_{L^2(\T^3)}^{2} }
\right)
\\
\le  & \frac{1}{2} \|\nabla {\utwo}^\sigma\|_{L^2(\T^3)}^2 +C\left( \|{\uone}\|_{L^4(\T^3)}^4+ \|{\uone}\|_{L^4(\T^4)}^4 \| {\utwo}^\sigma\|_{L^2(\T^3)}^2 
+  { \left (\dfrac{1}{2}\|{\uone}\|_{L^4(\T^3)}^4 + \dfrac{1}{2} \right)  \| {\utwo}^\sigma\|_{L^2(\T^3)}^{2} }
\right)
\\
\le  
& \frac{1}{2} \|\nabla {\utwo}^\sigma\|_{L^2(\T^3)}^2 +C\left( \|{\uone}\|_{L^4(\T^3)}^4+
 {\dfrac{3}{2}} \|{\uone}\|_{L^4(\T^4)}^4 \| {\utwo}^\sigma\|_{L^2(\T^3)}^2 
+  {\dfrac{1}{2} 
 \| {\utwo}^\sigma\|_{L^2(\T^3)}^{2} 
}
\right), 
  \end{align*}
where $C$ is a constant\footnote{Note that C may be different at each 
%
{occurrence}
below but all of them are independent of $\sigma$.} which is  independent of $\sigma$. 
For the second term, by the H\"{o}lder inequality and Young inequality,  we obtain the following estimation:
\begin{align*}
  &\left|\int_{\T^3}{\uone}\cdot \nabla 
{
\Phi_{\sigma}[{\dd}^\sigma]
}
\cdot (\Delta {\dd}^\sigma-f({\dd}^\sigma)){\rd x}\right|\\
\le   &\frac{1}{8}\|\Delta {\dd}^\sigma-f({\dd}^\sigma)\|_{L^2(\T^3)}^2+C \|{\uone}\|_{L^4(\T^3)}^2 \|\nabla {\dd}^\sigma\|_{L^4(\T^3)}^2, \\
\le   &\frac{1}{8}\|\Delta {\dd}^\sigma-f({\dd}^\sigma)\|_{L^2(\T^3)}^2+C\|{\uone}\|_{L^4(\T^3)}^2 \|\nabla^2 {\dd}^\sigma\|_{L^2(\T^3)}^{\frac{3}{2}}\|{\utwo}^\sigma\|_{L^2(\T^3)}^{\frac{1}{2}}\\
\le  & \frac{1}{8}\|\Delta {\dd}^\sigma-f({\dd}^\sigma)\|_{L^2(\T^3)}^2+\frac{1}{4}\|\nabla^2 {\dd}^\sigma\|_{L^2(\T^3)}^2+ C \|{\uone}\|_{L^4(\T^3)}^{4}\|{\utwo}^\sigma\|_{L^2(\T^3)}^2. 
\end{align*}
 {Combining these two estimations, together with \eqref{eqn:Engeqp},   we obtain}
\begin{equation}
  \begin{aligned}
    &\frac{\rd}{{\rd t}}\int_{\T^3}\frac{1}{2}(|{\utwo}^\sigma|^2+|\nabla {\dd}^\sigma|^2)+F({\dd}^\sigma) {\rd}x+
\dfrac{1}{2}\int_{\T^3} |\nabla {\utwo}^\sigma|^2 {\rd}x\\
&
+\dfrac{7}{8}\int_{\T^3}|\Delta {\dd}^\sigma-f({\dd}^\sigma)|^2{\rd}x
\\
    \leq
&
 C\left( \|{\uone}\|_{L^4(\T^3)}^4+
 {\dfrac{3}{2}} \|{\uone}\|_{L^4(\T^4)}^4 \| {\utwo}^\sigma\|_{L^2(\T^3)}^2 
+  {\dfrac{1}{2} 
 \| {\utwo}^\sigma\|_{L^2(\T^3)}^{2} 
}
\right)
\\
&
 +\frac{1}{4}\|\nabla^2 {\dd}^\sigma\|_{L^2(\T^3)}^2+ C \|{\uone}\|_{L^4(\T^3)}^{4}\|{\utwo}^\sigma\|_{L^2(\T^3)}^2
\\
\leq
&
 C\left( \|{\uone}\|_{L^4(\T^3)}^4+
   {\dfrac{1}{2} 
 \| {\utwo}^\sigma\|_{L^2(\T^3)}^{2} 
}
\right)
 +\frac{1}{4}\|\nabla^2 {\dd}^\sigma\|_{L^2(\T^3)}^2+ C \|{\uone}\|_{L^4(\T^3)}^{4}\|{\utwo}^\sigma\|_{L^2(\T^3)}^2. 
  \end{aligned}
  \label{eqn:Engeqp1}
\end{equation}
Here we note again that the constant C may be different at each 
{occurrence}. 
%
%
%
%
%
%
%
 {Now we deal with the last term on the left-hand side (LHS) of (\ref{eqn:Engeqp1}) as follows}
  \begin{align*}
  &  \int_{\T^3}|\Delta {\dd}^\sigma-f({\dd}^\sigma)|^2 {\rd x}\\
=&\int_{\T^3}[|\Delta {\dd}^\sigma|^2+|f({\dd}^\sigma)|^2-2\Delta {\dd}^\sigma\cdot f({\dd}^\sigma)]{\rd x}\\
 =   &\int_{\T^3}[|\Delta {\dd}^\sigma|^2+|f({\dd}^\sigma)|^2+2 \nabla {\dd}^\sigma: \nabla f({\dd}^\sigma)]{\rd x}\\
  =  &\int_{\T^3}[|\Delta {\dd}^\sigma|^2+|f({\dd}^\sigma)|^2-2|\nabla {\dd}^\sigma|^2+2|\nabla {\dd}^\sigma|^2|{\dd}^\sigma|^2+4|(\nabla {\dd}^\sigma)^{\top} {\dd}^\sigma|^2]{\rd x}.
  \end{align*}
 {By slightly rearranging the terms on  the RHS, we arrive at}
\begin{equation}
\label{eq:rea}
  \begin{aligned}
  &  \int_{\T^3}|\Delta {\dd}^\sigma-f({\dd}^\sigma)|^2 {\rd x}\\
  =  &\int_{\T^3}[|\Delta {\dd}^\sigma|^2+|f({\dd}^\sigma)|^2+2|\nabla {\dd}^\sigma|^2|{\dd}^\sigma|^2+4|(\nabla {\dd}^\sigma)^{\top} {\dd}^\sigma|^2]{\rd x}
 -2  \int_{\T^3}|\nabla {\dd}^\sigma|^2 {\rd x}.\\
  \end{aligned}
\end{equation}
 {Combining  \eqref{eq:rea} and \eqref{eqn:Engeqp1}, we obtain, after some slight rearrangements:}
\begin{equation}  \label{eqn:Engeq}
  \begin{aligned}
    &\frac{\rd}{{\rd t}}\int_{\T^3}\frac{1}{2}(|{\utwo}^\sigma|^2+|\nabla {\dd}^\sigma|^2)+F({\dd}^\sigma) {\rd}x+|\nabla {\utwo}^\sigma|^2_{L^2 (\T^3)}
+\dfrac{1}{8}\int_{\T^3}|\Delta {\dd}^\sigma-f({\dd}^\sigma)|^2 {\rd x}
\\
&
+\dfrac{6}{8}\int_{\T^3}[|\Delta {\dd}^\sigma|^2+|f({\dd}^\sigma)|^2 +2|\nabla {\dd}^\sigma|^2|{\dd}^\sigma|^2+4|(\nabla {\dd}^\sigma)^{\top} {\dd}^\sigma|^2]{\rd x}
\\
   &\leq 
 C\left( \|{\uone}\|_{L^4(\T^3)}^4
+  {\dfrac{1}{2} 
 \| {\utwo}^\sigma\|_{L^2(\T^3)}^{2} 
}
\right)
 +\frac{1}{4}\|\nabla^2 {\dd}^\sigma\|_{L^2(\T^3)}^2+ C \|{\uone}\|_{L^4(\T^3)}^{4}\|{\utwo}^\sigma\|_{L^2(\T^3)}^2 
\\
&+ \dfrac{6}{4}\int_{\T^3} |\nabla {\dd}^\sigma|^2 {\rd x}.
  \end{aligned}
\end{equation}
{
By employing techniques including an application of the Gronwall  inequality to  \eqref{eqn:Engeq} (for a detailed derivation, we refer the reader to Section~\ref{secA3} of the Appendix), we obtain}
\begin{equation}
\begin{aligned}\displaystyle
&\sup_{0<t<T}\left(\|{\utwo}^\sigma(t)\|_{H}^2+\|{\dd}^\sigma(t)\|_{H^1(\T^3)}^2\right)\\
&
+\int_{0}^{T} \|\nabla {\utwo}^\sigma(t)\|_{L^2(\T^3)}^2+\|\nabla^2 {\dd}^\sigma(t)\|_{L^2(\T^3)}^2  {\rd t}\\
 \le  & C\left(\|\uu_0\|_{H}^2+\|\dd_0\|_{H^1(\T^3)}^2+\int_0^T\|{\uone}(t)\|_{L^4(\T^4)}^4\rd t\right) e^{C\int_0^T  {\frac{1}{2}}+ \|\uone(t)\|_{L^4(\T^3)}^4 \rd t}\\
=& C\left(\|\uu_0\|_{H}^2+\|\dd_0\|_{H^1(\T^3)}^2+ \|{\uone}(t)\|_{L^4(Q_T)}^4  \right) e^{C\int_0^T  {\frac{1}{2}}+ \|\uone(t)\|_{L^4(\T^3)}^4 \rd t}\\
 =:&\Psi(T).
\label{eqn:GronEst}
\end{aligned}
\end{equation}
Then from (\ref{eqn:GronEst})   we infer  \eqref{eqn:GronEstp}, and also we obtain
\begin{equation*}
  \sup_{0<t<T}\left(\|{\utwo}^\sigma(t)\|_{H}^2+\|{\dd}^\sigma(t)\|_{H^1(\T^3)}^2\right)
+ \|\nabla {\utwo}^\sigma \|_{L^2(Q_T )}^2+\|\nabla^2 {\dd}^\sigma \|_{L^2(Q_T)}^2  
\leq \Psi(T), 
\end{equation*}
which implies  (\ref{eqn:lemmaa})$_1$. 
Also   from   (\ref{eqn:Engeq})  and \eqref{eqn:GronEst} we obtain  (\ref{eqn:lemmaa})$_5$.

Next, we aim to derive  (\ref{eqn:lemmaa})$_2$, a  uniform   bound for  ${\ptwo}^\sigma$ in $\sigma$.
Taking the divergence of \eqref{eqn:appro}$_1$ we obtain
\begin{equation}
  -\Delta {\ptwo}^\sigma=\div^2[({\uone}+\Phi_\sigma[{\utwo}^\sigma])\otimes({\uone}+{\utwo}^\sigma)]+\div(\nabla \Phi_\sigma[{\dd}^\sigma]\cdot (\Delta {\dd}^\sigma-f({\dd}^\sigma))).
  \label{eqn:approPeq}
\end{equation}
Applying the elliptic theory  {as in Lemma 5.1 of \cite{Robinson2016NS}},  and utilizing   H\"older's inequality we obtain:  
\begin{equation}\label{eq:rob}
\begin{aligned}
   \|{\ptwo}^\sigma\|_{L^{\frac{5}{3}}(Q_T)} 
\le& C\|({\uone}+\Phi_\sigma[{\utwo}^\sigma])\otimes ({\uone}+{\utwo}^\sigma)\|_{L^{\frac{5}{3}}(Q_T)}\\
&+\|\nabla (\Phi_\sigma[{\dd}^\sigma])\cdot (\Delta {\dd}^\sigma-f({\dd}^\sigma))\|_{ L^{\frac{5}{3}} (0, T; W^{-1, \frac{5}{3}} (\T^3))}\\
\le&C \left(\|{\uone}\|_{L^{\frac{10}{3}}(Q_T)}^2+\|{\utwo}^\sigma\|^2_{L^{\frac{10}{3}}(Q_T)}\right)\\
&+\|\nabla (\Phi_\sigma[{\dd}^\sigma])\cdot (\Delta {\dd}^\sigma-f({\dd}^\sigma))\|_{ L^{\frac{5}{3}} (0, T; W^{-1, \frac{5}{3}} (\T^3))}.
\end{aligned}\end{equation}
Now we first deal with the first term on the RHS. We   easily observe that 
\begin{equation}\label{eq:z1}
\|{\uone}\|_{L^{\frac{10}{3}}(Q_T)} \leq \|{\uone}\|_{L^{\infty}(Q_T)} .
\end{equation}  
By the Riesz–Thorin theorem
and  the Sobolev embedding theorem in 3D we observe that 
\begin{equation}
\begin{aligned}\label{eq:sob}
\|{\utwo}^\sigma\| _{L^{\frac{10}{3}}(Q_T)}& \leq C \|{\utwo}^\sigma\| _{L^{2} (0, T; L^6 (\T^3)) }  ^{\frac{2}{5}}\|{\utwo}^\sigma\| _{L^{\infty} (0, T; L^2 (\T^3)) } ^{\frac{3}{5}}\\
&\leq C \|{\utwo}^\sigma\| _{L^{2} (0, T; H^1 (\T^3)) }  ^{\frac{2}{5}}\|{\utwo}^\sigma\| _{L^{\infty} (0, T; L^2 (\T^3)) } ^{\frac{3}{5}}
\end{aligned}
\end{equation}
Then this together with \eqref{eq:z1} and  \eqref{eq:rob} imply that 
\begin{equation}\label{eq:rob7}
\begin{aligned}
   \|{\ptwo}^\sigma\|_{L^{\frac{5}{3}}(Q_T)} 
\le&C \left(\|{\uone}\|_{L^{\infty}(Q_T)}^2+ C \|{\utwo}^\sigma\| _{L^{2} (0, T; H^1 (\T^3)) }  ^{\frac{4}{5}}\|{\utwo}^\sigma\| _{L^{\infty} (0, T; L^2 (\T^3)) } ^{\frac{6}{5}}\right)\\
&+\|\nabla (\Phi_\sigma[{\dd}^\sigma])\cdot (\Delta {\dd}^\sigma-f({\dd}^\sigma))\|_{ L^{\frac{5}{3}} (0, T; W^{-1, \frac{5}{3}} (\T^3))}.
\end{aligned}\end{equation}
Next we deal with the second term  on the RHS of \eqref{eq:rob7}.  We first observe that in 3D, the space $L^{\frac{15}{14}} (\T^3)$ is   embedded in $W^{-1, \frac{5}{3}} (\T^3)$ thanks to the Sobolev inequalities , so applying this observation to the second term on the RHS   we obtain
\begin{equation}\label{eq:rob2}
\begin{aligned}
   \|{\ptwo}^\sigma\|_{L^{\frac{5}{3}}(Q_T)} 
\le& C \left(\|{\uone}\|_{L^{\infty}(Q_T)}^2+ C \|{\utwo}^\sigma\| _{L^{2} (0, T; H^1 (\T^3)) }  ^{\frac{4}{5}}\|{\utwo}^\sigma\| _{L^{\infty} (0, T; L^2 (\T^3)) } ^{\frac{6}{5}}\right)
\\
&+\|\nabla (\Phi_\sigma[{\dd}^\sigma])\cdot (\Delta {\dd}^\sigma-f({\dd}^\sigma))\|_{ L^{\frac{5}{3}} (0, T;  L^{\frac{15}{14}} (\T^3))}.
\end{aligned}\end{equation}
Now applying   H\"older's inequality to the second term on the RHS   we obtain 
\begin{equation}\label{eq:rob3}
\begin{aligned}
   \|{\ptwo}^\sigma\|_{L^{\frac{5}{3}}(Q_T)} 
\le& C \left(\|{\uone}\|_{L^{\infty}(Q_T)}^2+ C \|{\utwo}^\sigma\| _{L^{2} (0, T; H^1 (\T^3)) }  ^{\frac{4}{5}}\|{\utwo}^\sigma\| _{L^{\infty} (0, T; L^2 (\T^3)) } ^{\frac{6}{5}}\right)
\\
&+C\|\nabla {\dd}^\sigma\|^2_{L ^{10} (0, T;  L^{\frac{30}{13}} (\T^3)}\|\Delta {\dd}^\sigma-f({\dd}^\sigma)\|_{L^2(Q_T)}.
\end{aligned}\end{equation}
Now  applying Riesz–Thorin theorem  to the second term on the RHS of (\ref{eq:rob3}), and utilizing  the    Sobolev embedding theorem in 3D, we observe that 
\begin{equation}\label{eq:rob5}
\begin{aligned}
\|\nabla {\dd}^\sigma\|_{L ^{10} (0, T;  L^{\frac{30}{13}} (\T^3))} \leq C \|\nabla {\dd}^\sigma\|_{L ^{\infty} (0, T;  L^{2} (\T^3))}^{\frac{1}{5}} \times  \|\nabla {\dd}^\sigma\|_{L ^{2} (0, T;  L^{6} (\T^3))}^{\frac{4}{5}}
\\
\leq
 C \|\nabla {\dd}^\sigma\|_{L ^{\infty} (0, T;  L^{2} (\T^3))}^{\frac{1}{5}} \times  \|\nabla {\dd}^\sigma\|_{L ^{2} (0, T;  H^1 (\T^3))}^{\frac{4}{5}}
\end{aligned}
\end{equation}
This together with (\ref{eq:rob3}) implies that 
\begin{equation}\label{eq:rob6}
\begin{aligned}
  & \|{\ptwo}^\sigma\|_{L^{\frac{5}{3}}(Q_T)} \\
\le& C  \left(\|{\uone}\|_{L^{\infty}(Q_T)}^2+ C \|{\utwo}^\sigma\| _{L^{2} (0, T; H^1 (\T^3)) }  ^{\frac{4}{5}}\|{\utwo}^\sigma\| _{L^{\infty} (0, T; L^2 (\T^3)) } ^{\frac{6}{5}}\right)
\\
&+C \|\nabla {\dd}^\sigma\|_{L ^{\infty} (0, T;  L^{2} (\T^3))}^{\frac{2}{5}}   \|\nabla {\dd}^\sigma\|_{L ^{2} (0, T;  H^1 (\T^3))}^{\frac{8}{5}}
\|\Delta {\dd}^\sigma-f({\dd}^\sigma)\|_{L^2(Q_T)}\\
\end{aligned}\end{equation}
Finally from \eqref{eq:rob6},      (\ref{eqn:lemmaa})$_1$,  and (\ref{eqn:lemmaa})$_5$     we obtain       
\begin{equation}
  \|{\ptwo}^\sigma\|_{L^{\frac{5}{3}}(Q_T)}  
  \le C\Psi(T) +  {C   \|{\uone}\|_{L^{\infty}(Q_T)}^2},  \label{eqn:paiest}
\end{equation} which is \eqref{eqn:lemmaa}$_4$. 


{Next}, 
to prove \eqref{eqn:lemmaa}$_3$, 
  we can utilize \eqref{eqn:appro}$_1$ to show that 
\begin{equation}
\begin{aligned}
  &\|\pa_t {\utwo}^\sigma\|_{L^2(0, T; D(A^{-1}))+L^2(0, T; H^{-1}(\T^3))+L^{\frac{5}{4}}(Q_T)}\\
\le  & C(\|{\utwo}^\sigma\|_{L^2(Q_T)} +\|\uone\|_{L^4(Q_T)}^2+\|{\utwo}^\sigma\|_{L^4(Q_T)}^2\\
&+\|\nabla \dd^\sigma\|_{L^{\frac{10}{3}}(Q_T)}\|\Delta \dd^\sigma-f({\dd}^\sigma)\|_{L^{2}(Q_T)})\\
\le  & C(
  \sqrt{\Psi(T)}
  +\Psi(T)). 
  \label{eqn:patvEq}
  \end{aligned}
\end{equation}

Finally, to prove \eqref{eqn:lemmaa}$_4$, 
we can utilize \eqref{eqn:appro}$_3$ to derive that
\begin{equation}
  \begin{aligned}
    &\|\pa_t \dd^\sigma\|_{L^{\frac{5}{3}}(Q_T)}\\
  \le  & C  \|{\uone}\|_{L^{\frac{10}{3}}(Q_T)}\|\nabla \dd^\sigma\|_{L^{\frac{10}{3}}(Q_T)}+
C\|{\utwo}^\sigma\|_{L^{\frac{10}{3}}(Q_T)}\|\nabla \dd^\sigma\|_{L^{\frac{10}{3}}(Q_T)} 
\\
&+
CT^{\frac{1}{10}}\|\Delta \dd^\sigma-f({\dd}^\sigma)\|_{L^2(Q_T)} \\
   \le  &C(\Psi(T)+
CT^{\frac{1}{10}}
    \sqrt{\Psi(T)}).    
\end{aligned}
  \label{eqn:patdEq}
\end{equation}
\end{proof}

\subsection{Approaching the existence of 
{martingale}
 suitable solutions}
 Based on the pathwise estimates derived above,  we can infer  a compactness argument from
  tightness properties of the approximation sequence.  Next we will use 
  the  Skorokhod embedding theorem  (see Theorem 2.4 in \cite{ZabczykDaPrato1}) to construct an approximation sequence with a possible shift of the stochastic basis. It is then necessary to verify that the key energy equality \eqref{eqn:LocEngy} is preserved by the constructed sequence. Finally   
{
passing to the limit  in (\ref{eqn:appro})  
}
we  obtain
the global existence of   martingale suitable weak solutions  satisfying the local energy inequalities as desired.

\subsubsection{Compactness of the approximation sequence}
\label{sec:comp}
 Let $\nu^\sigma$ be the law of the random variable $U_{\sigma}:=({\utwo}^\sigma, {\ptwo}^\sigma, {\dd}^\sigma, {\uone}, W)$ with values in  $\mathcal{E}$:
$$\mathcal{E}=L^2(0, T; H)\times L^{\frac{5}{3}}(Q_T)\times L^2(0, T; H^1(\T^3;\R^3))\times C([0,T]; H)\times C_0([0,T]; H). $$
Let $\mu^\sigma$ be the projection of $\nu^\sigma$ in the variables $({\utwo}^\sigma, {\dd}^\sigma)$, that is,  the law of $({\utwo}^\sigma, {\dd}^\sigma)$.  
%
The key step to establish the probabilistic convergence of this approximation scheme
is to show that the family of measures $\mu^\sigma$ is tight. Namely, for each $\varepsilon>0$, there exists a compact set $K_\varepsilon$ in $\mathcal{E}$ such that 
\begin{equation}\label{eq:tight}
\mu^\sigma(K_\varepsilon)\ge 1-\varepsilon, \text{ for }0<\sigma<1.
\end{equation}
We take 
$$
\begin{aligned}
  K_R&:=\Big\{({\utwo}, \dd)\Big| \|{\utwo}\|_{L^\infty(0, T; H)\cap L^2(0, T; V)}+\|\pa_t {\utwo}\|_{L^2(0,T; D(A^{-1}))+L^2(0, T; H^{-1}(\T^3))+L^{\frac{5}{4}}(Q_T)}\\
  &+\|{\dd}\|_{L^\infty(0, T; H^1(\T^3))\cap L^2(0, T; H^2)\cap H^1(0, T; L^2(\T^3))}+\|\pa_t \dd^\sigma\|_{L^{\frac{5}{3}}(Q_T)}\le R\Big\}.
\end{aligned}
$$
By the Aubin-Lions lemma, we conclude that $K_R$ is compact in $L^{p_1}(Q_T)\times L^{p_2}(Q_T)$ with $1<p_1<\frac{10}{3}$, and $1<p_2<10$. Moreover, by the estimates \eqref{eqn:zestima}, \eqref{eqn:GronEst}, \eqref{eqn:patvEq} and \eqref{eqn:patdEq}, we can follow the same argument as in \cite[(381)-(383)]{FlandoliGatarek1} to conclude that there exists a sufficiently large $R>0$ such that $K_{1/R}$ satisfies \eqref{eq:tight}.
Consequently,   the laws of   $({\utwo}^\sigma,  {\dd}^\sigma, {\uone}, W)$,   which will be denoted as $\xi^\sigma$ 
subsequently, 
   are tight over the phase space  $\mathcal{G}$:
$$ \mathcal{G} =L^{p_1}(Q_T)\times  L^{p_2}(Q_T)\times C([0,T]; H)\times C_0([0,T]; H),
%
%
 1<p_1<\frac{10}{3}, 1<p_2<10. $$  Since the family of measures $\{\xi^\sigma \} $ is weakly compact on $\mathcal G$,  we deduce that $\{\xi^\sigma \} $ converges weakly to a probability measure $\xi$ on $\mathcal G$ up to a subsequence. 
%
%
%
%
Hence,  using a version of the Skorokhod  embedding theorem (See \cite[Lemma 3.3]{Romito2010}), 
we can conclude that there exists a probability
triple
$  ( \tilde\Omega, \tilde{ \mathcal F},    \tilde {\mathbb P})$,
a sequence of random variables 
$\tilde{U}_k=(\tilde{{\utwo}}^{\sigma_k},\tilde{\ptwo}^{\sigma_k}, \tilde{{\dd}}^{\sigma_k}, \tilde{{\uone}}^k,  \tilde{W}^{k})$ with values in $ \mathcal{E}$,   and an element $\tilde{U} = (\tilde {\utwo},  \tilde {\ptwo}, \tilde{\dd},\tilde {\uone}, \tilde W) \in \mathcal{E}$ such that for each $k$, $\tilde{U}_k$ has the same law with $U_{\sigma_k}$, 
in particular, $\tilde{W}^{k}$   is a Brownian motion adapted to the filtration $\{\tilde {\mathcal {F}}^{k}_t \}_{t\geq 0}$ which is given by the completion of the $\sigma$-algebra generated by $\{\tilde{U}_k (s);  s\leq t\}$, and
\begin{equation}\label{eq:convergemain}
(\tilde{{\utwo}}^{\sigma_k},\tilde{{\dd}}^{\sigma_k}, \tilde{{\uone}}^k, \tilde{W}^{k}) \rightarrow 
  (\tilde {\utwo},   \tilde{\dd},\tilde {\uone}, \tilde W) \quad \tilde{\mathbb P}- a.s.,  \mbox{ as }k\to \infty,  
\end{equation}
in the topology of $\mathcal G$.  Additionally,   setting $\{\tilde {\mathcal {F}}_t \}_{t\geq 0}$ as the completion of the $\sigma$-algebra generated by $\{\tilde{U} (s);  s\leq t\}$, we obtain a new stochastic basis 
%
{
$  \tilde {\mathfrak{B}}= \left ( \tilde\Omega, \tilde{ \mathcal F}, \{\tilde {\mathcal {F}}_t \}_{t\geq 0},  \tilde {\mathbb P},   \left(\td W(t) \right)_{t \geq 0} \right)$, 
}
 as in \eqref{eq:sb}.

\subsubsection{Preservation of the key energy equality}

Next we   show that for each $k$,  the random variable $(\tilde {\utwo}^{\sigma_k},   \tilde {{\ptwo}}^{\sigma_k},  \tilde{\dd}^{\sigma_k}, 
\tilde{{\uone}}^k
)$ satisfies the local energy equality \eqref{eqn:LocEngy}. 
%
We adapt 
a 
technique
due to
Bensoussan \cite{bensoussan1995stochastic}. 
Given
%
{$\varphi\in C_0^\infty(Q_T;\R_+)$},  
 for $( {\utwo} ^\sigma,   {\ptwo}^\sigma,   {\dd}^\sigma,  {{\uone}} )$,   the components of $  {U}_\sigma$,   
define the random variable $X^\sigma$ on $(\Omega, \mathcal{F})$ as
\begin{align}\label{eq:xsigma}
    X^\sigma&:=\left|X_1^\sigma -X_2^\sigma  \right|,
\end{align}
where 
\begin{align*}
   X_1^\sigma&:= \int_{\T^3\times\{T\}}\left( \frac{|{\utwo}^\sigma|^2}{2}+\frac{|\nabla {\dd}^\sigma|^2}{2}+F({\dd}^\sigma) \right)\varphi+\int_{Q_T}(|\nabla {\utwo}^\sigma|^2+|\Delta {\dd}^\sigma|^2+|f({\dd}^\sigma)|^2)\varphi
\end{align*}
and
\begin{align*}
    X_2^\sigma&:= \int_{Q_T}\left( \frac{|{\utwo}^\sigma|^2}{2}+\frac{|\nabla {\dd}^\sigma|^2}{2}+F({\dd}^\sigma) \right)\pa_t \varphi
    +\int_{\T^3}[({\uone}+\Phi_\sigma[{\utwo}^\sigma])\cdot \nabla {\utwo}^\sigma]\cdot {\uone}\varphi\\
    &+\int_{Q_T}\left (\frac{|{\utwo}^\sigma|^2}{2} + {\utwo}^\sigma \cdot \uone\right)({\uone}+\Phi_\sigma[{\utwo}^\sigma])\cdot \nabla \varphi
    +{\ptwo}^\sigma {\utwo}^\sigma \cdot  \nabla \varphi
    +\left( \frac{|{\utwo}^\sigma|^2}{2}
    +\frac{|\nabla {\dd}^\sigma|^2}{2} \right)\Delta \varphi\\
    &+\int_{Q_T}\left( \nabla {\dd}^\sigma\odot \nabla {\dd}^\sigma-\frac{1}{2}|\nabla {\dd}^\sigma|^2{\rm I}_3 \right):\nabla^2 \varphi\\
    &+\int_{Q_T}({\uone}\cdot \nabla \Phi_\sigma[{\dd}^\sigma])\cdot (\Delta {\dd}^\sigma-f({\dd}^\sigma))
    +\int_{Q_T}[({\uone}+{\utwo}^\sigma)\cdot \nabla {\dd}^\sigma]\cdot (\nabla \varphi\cdot \nabla){\dd}^\sigma\\
    &-\int_{Q_T}f({\dd}^\sigma)(\nabla \varphi\cdot \nabla {\dd}^\sigma)-2\nabla f({\dd}^\sigma):\nabla {\dd}^\sigma \varphi .
\end{align*}
  From \eqref{eqn:LocEngy} in  Lemma \ref{lemma:LocalEnergy1}, we  infer that 
\begin{align}\label{eq:xsigma1}
 X_1^\sigma =X_2^\sigma   \mbox{ with Probability 1}. 
\end{align} 
This together with 
\eqref{eq:xsigma} implies that $X^\sigma=0$ with probability 1.  Thus we have $\mathbb{E}(g(U_\sigma))=0$,
where $g(U_\sigma):=X^\sigma/(1+X^\sigma).$
Let $\tilde{X}^k$ be defined analogously for   $(\tilde{{\utwo}}^{\sigma_k},  \tilde{{\ptwo}}^{\sigma_k},    \tilde{{\dd}}^{\sigma_k},
\tilde{{\uone}}^k
)$,   the components of $\tilde {U}_k$,  as a random variable on $(\tilde {\Omega}, \tilde {\mathcal F})$. 
Observing that $g(\cdot)$
is a deterministic bounded continuous function on 
a
subset of $\mathcal{E}$, we have
$$\tilde{\mathbb{E}}\left[ \frac{\tilde{X}^{k}}{1+\tilde{X}^k} \right]=\tilde{\mathbb{E}}[g(\tilde{U}_k)]=\mathbb{E}[g(U_{\sigma_k})]=\mathbb{E}\left[ \frac{X^{\sigma_k}}{1+X^{\sigma_k}} \right]=0, $$
which implies that 
\begin{equation}\label{eq:xi}
\mathbb{\tilde {P}} \left ( \tilde{X}^{ k}=0 \right )=1,
\end{equation} 
as required.
%
%
%

\subsubsection{Passage to the limit}
With a similar method as above,  we can further verify  that      for each $\sigma_k$,   $\tilde {U}_{k}$ satisfies the equations \eqref{eqn:SELu1}   and 
\eqref{eqn:appro}  in the sense of distribution    
$\tilde{\mathbb{P}}$-a.s..
Particularly,   
the random variables $(\tilde{{\utwo}}^{\sigma_k},  \tilde{{\ptwo}}^{\sigma_k},    \tilde{{\dd}}^{\sigma_k},    \tilde {{\uone}}^k)$
satisfy the approximation equation
%
 \eqref{eqn:appro}   $\tilde{\mathbb{P}}$-a.s.,  that is
\begin{equation}
\begin{cases}
   & \pa_t \tilde {\utwo}^{\sigma_k}-\Delta\tilde {\utwo}^{\sigma_k}+(\tilde{\uone}^k+\Phi_{\sigma_k}[\tilde  {\utwo}^{\sigma_k}])\cdot \nabla(\tilde{\uone}^k+\tilde{\utwo}^{\sigma_k})+\nabla\tilde {\ptwo}^{\sigma_k}\\
&\quad\quad\quad\quad \quad\quad=-\nabla  \Phi_{\sigma_k}[\tilde{\dd}^
    {\sigma_k}]\cdot (\Delta\tilde {\dd}^{\sigma_k}-f(\tilde{\dd}^{\sigma_k})), \\
  &  \nabla\cdot \tilde{\utwo}^{\sigma_k}=0, \\
 &   \pa_t \tilde {\dd}^{\sigma_k}-(\tilde{\uone}^k+\tilde {\utwo}^{\sigma_k})\cdot \nabla \Phi_{\sigma_k}[ \tilde {\dd}^{\sigma_k}]=\Delta \tilde {\dd}^{\sigma_k}-f(\tilde  {\dd}^{\sigma_k}). 
\end{cases}
  \label{eqn:approtilde}
\end{equation} 
As a result,   
{
passing  to the limit in
}
 (the weak form of) the equations
 \eqref{eqn:SELu1}  and 
 \eqref{eqn:approtilde}  as $\sigma_k\to 0$,    we conclude that $(\tilde {\utwo},   \tilde{\dd},  \tilde {\uone}, \tilde W)$ solves  \eqref{eqn:SELu1} and \eqref{eqn:SELu2} in the sense of distribution 
   (see Def. \ref{def:weaksolution})
   $\tilde{\mathbb{P}}$-a.s..   Thus there exists a distribution $\tilde {\ptwo}$ defined on the stochastic basis $\tilde{\mathfrak{B}}$
 which plays the role of the pressure field for $\tilde {\utwo}$. In fact, let $\tilde{{\ptwo}}$ solve the following Poisson equation
 $$
   -\Delta \tilde{{\ptwo}}=\div^2[(\tilde{{\uone}}+\tilde{{\utwo}})\otimes(\tilde{{\uone}}+\tilde{{\utwo}})]+\div(\nabla \tilde{\dd}\cdot (\Delta \tilde{\dd}-f(\tilde{\dd}))),
   $$
 with $\int_{{\mathbb T}^3} \tilde  {\ptwo} (t)   = 0$ for every $t$.  
Thanks to \eqref{eqn:approtilde},  for almost sure $\tilde \omega \in \tilde \Omega$,  we have 
the same estimate as in \eqref{eqn:paiest} for $\tilde{{\ptwo}}^{\sigma_k} (\tilde \omega)$,  which implies that the sequence $\tilde{{\ptwo}}^{\sigma_k}(\tilde \omega)$ is bounded uniformly in $L^{5/3} (Q_T)$ independent of $\sigma$.  Hence we have for almost every fixed $\tilde \omega \in \tilde \Omega$,  there exists a subsequence    
%
which may depend on
  $\tilde \omega$ such that
\begin{equation}\label{eq:piconv}
 \tilde{{\ptwo}}^{\sigma_k} (\tilde\omega)\to \tilde {\ptwo} (\tilde \omega) \mbox{ weakly in }L^{\frac{5}{3}}(Q_T)  \mbox{ up to the subsequence}. 
\end{equation}

Finally, we verify that $(\tilde {\utwo},  \tilde {\ptwo}, \tilde {\dd})$ satisfies the local energy inequality described in Definition \ref{def:suitweak}.   For almost sure $\tilde \omega \in \tilde \Omega$,  as proven above (see equation \eqref{eq:xi}) we know that  the random variables $(\tilde {\utwo}^{\sigma_k} (\tilde\omega),   \tilde {{\ptwo}}^{\sigma_k}(\tilde \omega),  \tilde{\dd}^{\sigma_k}(\tilde \omega), 
\tilde{{\uone}}^k (\tilde \omega))$ satisfy the local energy   equality \eqref{eqn:LocEngy}.  Thus for each such $\tilde \omega$, passing to    the limit  
upon the corresponding subsequence as in  \eqref{eq:piconv}(using also   \eqref{eq:convergemain}),    we obtain   the local energy inequality as (\ref{eq:ine1}) and (\ref{eq:ine2})  for $(\tilde {\utwo} (\tilde \omega),  \tilde {\ptwo}  (\tilde \omega),  \tilde{\dd} (\tilde \omega),
\tilde{{\uone}} (\tilde \omega))$ 
 by the lower semicontinuity.  

\section{Partial regularity of the martingale suitable weak solutions}
\label{sec:partial}

With the global existence of martingale suitable weak solutions in hand  (Section \ref{sec:exis}), we now focus on the   second result of  Theorem \ref{thm:main2},  
 that is,  to establish the partial regularity of these martingale suitable weak solutions.

\subsection{Blowing up analysis}

First, we recall the maximum principle \cite[Lemma 2.2]{lin2016global} for the $L^\infty$ of $\dd$ from the transported Ginzburg--Landau heat flow :
\begin{equation}
	\left\{
	\begin{array}{l}
		\pa_t {\dd}+{\uu}\cdot \nabla {\dd}=\Delta {\dd}-f({\dd}), \\
		\nabla \cdot  \uu=0, \\
		{\dd}(0)={\dd}_0.
	\end{array}
	\right.
	\label{eqn:transprotHE}
\end{equation}
\begin{lemma}\label{lemma:maximum}
	For $T>0$, assume that ${\uu}\in L^2(0,T; V )$ and ${\dd}_0\in H^1(\T;\R^3)$ with 
	$|{\dd}_0(x)|\le 1$ a.e. $x\in \mathbb{T}^3$. Suppose ${\dd}\in L^2(0, T; H^2(\T^3, \R^3))$ is a weak solution to \eqref{eqn:transprotHE}. Then $|{\dd}(x, t)|\le 1$ for a.e. $(x, t)\in 
 Q_T$.
\end{lemma}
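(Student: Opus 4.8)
The plan is to prove the pointwise bound by an $L^\infty$-type maximum-principle argument: multiply the director equation \eqref{eqn:transprotHE}$_1$ by $\psi:=(|\dd|^2-1)_+\,\dd$, integrate over $\T^3$, and exploit the defocusing sign of the Ginzburg--Landau force. Since $\epsilon=1$ gives $F(\dd)=(|\dd|^2-1)^2$ and hence $f(\dd)=\nabla_\dd F(\dd)=4(|\dd|^2-1)\dd$, the relevant sign structure is $\dd\cdot f(\dd)=4(|\dd|^2-1)|\dd|^2$. Write $w:=|\dd|^2-1$ and $w_+:=\max\{w,0\}$, so $\nabla w_+=\mathbf 1_{\{w>0\}}\nabla w$ with $\nabla w=2\dd^{\!\top}\nabla\dd$; the hypothesis $|\dd_0|\le1$ gives $w_+(0)=0$ a.e. In dimension three $H^2(\T^3)\hookrightarrow L^\infty(\T^3)$, so $\dd\in L^2(0,T;L^\infty_x)$ and $w_+\in L^2(0,T;H^1(\T^3))$, which is what makes $\psi$ an admissible test function.

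Granting admissibility, I would pair \eqref{eqn:transprotHE}$_1$ with $\psi$ and integrate over $\T^3$, treating the four terms as follows. The time term is a total derivative, $\int_{\T^3}\psi\cdot\pa_t\dd=\tfrac14\,\frac{\rd}{\rd t}\int_{\T^3}w_+^2$; the transport term vanishes by incompressibility, since $w_+\nabla w=\nabla(\tfrac12 w_+^2)$ and $\div\uu=0$. The diffusion term is nonpositive: integrating by parts and using $(\dd\otimes\nabla w_+):\nabla\dd=\tfrac12\nabla w_+\cdot\nabla w=\tfrac12|\nabla w_+|^2$,
\[
\int_{\T^3}\psi\cdot\Delta\dd=-\int_{\T^3}w_+\,|\nabla\dd|^2-\tfrac12\int_{\T^3}|\nabla w_+|^2\le0 .
\]
So is the potential term, because $w_+\,w=w_+^2$ and $w+1=|\dd|^2\ge0$, whereas $w+1>1$ wherever $w_+>0$, so
\[
-\int_{\T^3}\psi\cdot f(\dd)=-4\int_{\T^3}w_+^2\,(w+1)\le0 .
\]
Adding the four identities gives $\frac{\rd}{\rd t}\int_{\T^3}w_+^2\le0$ for a.e.\ $t\in(0,T)$; combined with $\int_{\T^3}w_+^2(0)=0$ this forces $\int_{\T^3}w_+^2(t)=0$ for a.e.\ $t$, i.e.\ $|\dd(x,t)|\le1$ for a.e.\ $(x,t)\in Q_T$.

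The only genuine point requiring care is rigor: for a weak rather than classical solution one must justify the chain rules for $w\mapsto w_+^2$ under $\pa_t$ and $\nabla$, the spatial integration by parts above, and in particular the Lions--Magenes-type identity $\int_{\T^3}w_+\,\pa_t w=\frac{\rd}{\rd t}\int_{\T^3}\tfrac12 w_+^2$, within the available regularity class ($\uu\in L^2(0,T;V)$, $\dd\in L^2(0,T;H^2)\cap L^\infty(0,T;H^1)$, with $\pa_t\dd\in L^2(0,T;L^2)+L^1(0,T;L^{3/2})$ coming from the equation and parabolic theory). This is carried out by a Steklov-averaging (or mollification-in-time) argument together with the embedding $H^2\hookrightarrow L^\infty$ in three dimensions, which keeps every pairing finite; one may also replace $s\mapsto s_+^2$ by a smooth convex surrogate supported on $[1,\infty)$ to sidestep the non-smoothness, or simply invoke \cite{lin2016global}, where this maximum principle is established in exactly this setting. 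I expect this time-regularization/justification step to be the main — though entirely routine — obstacle, all the remaining algebra being soft.
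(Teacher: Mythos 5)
The paper itself gives no proof of this lemma---it is quoted directly from \cite{lin2016global}---so there is nothing internal to compare against; your argument is the standard Stampacchia truncation proof used in that reference and its algebra is correct (sign of the Ginzburg--Landau term $-\psi\cdot f(\dd)=-4w_+^2(w+1)\le 0$, vanishing of the transport term by $\div\uu=0$, nonpositivity of the diffusion term after integration by parts). Your own closing caveats about justifying the chain rules for $w\mapsto w_+^2$ and the Lions--Magenes identity in the stated regularity class (via Steklov averaging or a smooth convex surrogate for $s\mapsto s_+^2$) are exactly the points that make the argument rigorous rather than formal, and with them the proof is complete.
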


The following lemma will play an essential role in the partial regularity of suitable weak solutions. 
\begin{lemma}	\label{lemma:smallenergyimprv}
	There exist $\varepsilon_0>0$, $0<\tau_0<\frac{1}{2}$ 
 and $C_0>0$ such that if $({\utwo}, {\dd}, {\ptwo})$ is a suitable weak solution in $\mathbb{T}^3\times(0, \infty)$, then for $z_0=(x_0, t_0)\in \mathbb{T}^3\times(r^2, \infty)$, and $r>0$, if it holds that 
	$$|{\uone}|\le M,\quad |{\dd}|\le M  \text{ in }\Q_r(z_0),$$ where $\Q_r(z_0)=B_r(x_0)\times(t_0-r^2, t)$,   
	and
	$$\Theta_{z_0}(r):=r^{-2}\int_{\Q_r(z_0)}(|{\utwo}|^3+|\nabla {\dd}|^3)+\left( r^{-2}\int_{\Q_r(z_0)}|\ptwo|^{\frac{3}{2}} \right)^2\le \varepsilon_0^3,$$
	then 
\begin{align*}
	\Theta_{z_0}(\tau_0 r) &=(\tau_0 r)^{-2}\int_{\Q_{\tau_0 r}(z_0)}(|{\utwo}|^3+|\nabla {\dd}|^3)+\left( (\tau_0 r)^{-2}\int_{\Q_{\tau_0 r}(z_0)}|\ptwo|^{\frac{3}{2}} \right)^2\\
&\le  \frac{1}{2}\max\{\Theta_{z_0}(r), C_0r^3\}.
\end{align*}
\end{lemma}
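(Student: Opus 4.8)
The statement is the standard $\varepsilon$-regularity excess-decay lemma adapted to the coupled system, so I would follow the Caffarelli--Kohn--Nirenberg / Lin-type blow-up scheme, carried through for the triple $({\utwo},{\dd},{\ptwo})$ with the extra linear drift ${\uone}$ treated as a bounded (by $M$) lower-order perturbation. The plan is to argue by contradiction: suppose the conclusion fails. Then there exist sequences $r_k\downarrow$ (or bounded), points $z_k=(x_k,t_k)$, and suitable weak solutions $({\utwo}_k,{\dd}_k,{\ptwo}_k)$ with $|{\uone}_k|,|{\dd}_k|\le M$ on $\Q_{r_k}(z_k)$, $\varepsilon_k^3:=\Theta_{z_k}(r_k)\le\varepsilon_0^3$, $C_0 r_k^3\le \Theta_{z_k}(r_k)$ (this is the regime we must rule out; when $C_0r_k^3$ dominates the conclusion is trivial), yet $\Theta_{z_k}(\tau_0 r_k)>\tfrac12\Theta_{z_k}(r_k)$ for every choice of $\tau_0$. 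Rescale to unit scale: set $\lambda_k:=\Theta_{z_k}(r_k)^{1/3}\to 0$ (wlog, else a compactness argument at fixed scale closes it), and define
\begin{equation*}
\utwo_k'(y,s):=\frac{r_k}{\lambda_k}\,\utwo_k(x_k+r_k y,\,t_k+r_k^2 s),\qquad
\dd_k'(y,s):=\frac{1}{\lambda_k}\bigl(\dd_k(x_k+r_k y,\,t_k+r_k^2 s)-(\dd_k)_{z_k,r_k}\bigr),
\end{equation*}
with ${\ptwo}_k'$ and ${\uone}_k'$ rescaled with the same parabolic weights, so that on $\Q_1(0)$ the normalized excess $\int_{\Q_1}(|\utwo_k'|^3+|\nabla\dd_k'|^3)+(\int_{\Q_1}|{\ptwo}_k'|^{3/2})^2=1$.

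Next I would extract uniform bounds and pass to the limit. From the local energy inequality (item (5) of Definition \ref{def:suitweak}) together with the $L^3$ normalization and the pressure estimate, one gets, after choosing a slightly smaller ball, uniform bounds on $\utwo_k'$ in $L^\infty_t L^2_x\cap L^2_t H^1_x$, on $\nabla\dd_k'$ in $L^\infty_t L^2_x$ with $\Delta\dd_k'$ (or $\Delta\dd_k'-f(\dd_k')$) in $L^2$, on ${\ptwo}_k'$ in $L^{5/3}$, and on the time derivatives in negative-order spaces as in \eqref{eqn:patvEq}--\eqref{eqn:patdEq}; crucially the rescaled linear part satisfies ${\uone}_k'\to 0$ (it carries a factor $\lambda_k^{-1}$ but $|{\uone}_k|\le M r_k$-type smallness via the $C_0 r_k^3\le\varepsilon_k^3$ hypothesis forces $\|{\uone}_k'\|\to 0$ in the relevant norm — this is precisely why the $C_0 r^3$ term is in the statement), and $f(\dd_k')\to 0$ as well after rescaling since $f$ is smooth and $\dd_k'$ has vanishing amplitude. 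By Aubin--Lions, $\utwo_k'\to \utwo_\infty$ and $\nabla\dd_k'\to\nabla\dd_\infty$ strongly in $L^3_{loc}$, ${\ptwo}_k'\rightharpoonup {\ptwo}_\infty$ weakly, and the limit $(\utwo_\infty,\dd_\infty,{\ptwo}_\infty)$ solves the \emph{linear} limiting system — a Stokes system $\partial_t\utwo_\infty-\Delta\utwo_\infty+\nabla{\ptwo}_\infty=0$, $\div\utwo_\infty=0$ decoupled from the heat equation $\partial_t\nabla\dd_\infty-\Delta\nabla\dd_\infty=0$ — because every nonlinear and coupling term carries at least one extra power of $\lambda_k$ or $r_k$.

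Then invoke interior regularity for these linear equations: solutions of Stokes and of the heat equation enjoy the Campanato-type decay $\int_{\Q_\theta}(|\utwo_\infty-(\utwo_\infty)_{\Q_\theta}|^3+|\nabla\dd_\infty-\overline{\nabla\dd_\infty}|^3)+\theta^{-?}(\cdots)\le C\theta^{?}\cdot(\text{unit excess})$, so one can fix $\tau_0$ small enough that the limiting excess at scale $\tau_0$ is $\le \tfrac14$ (beating the factor $\tfrac12$). By the strong $L^3$ convergence the same holds for $({\utwo}_k',\dd_k',{\ptwo}_k')$ for $k$ large — here one must handle the pressure's mean-value/Poisson representation carefully to get strong convergence of the $L^{3/2}$ pressure term on the smaller ball, which is the one genuinely delicate point — contradicting $\Theta_{z_k}(\tau_0 r_k)>\tfrac12\Theta_{z_k}(r_k)$. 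Unwinding the scaling and absorbing the lower-order contributions from ${\uone}$ and $f(\dd)$ into the $C_0 r^3$ term yields the claimed inequality with the stated $\tfrac12\max\{\Theta_{z_0}(r),C_0 r^3\}$.

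\textbf{Main obstacle.} The crux is the treatment of the pressure in the blow-up: unlike the velocity and $\nabla\dd$, the pressure is nonlocal, so strong $L^{3/2}$ convergence of ${\ptwo}_k'$ on interior balls is not immediate from the energy bounds alone; one must decompose ${\ptwo}_k'$ via the Poisson equation \eqref{eqn:approPeq} into a ``local'' harmonic-pressure part (which converges strongly, using the strong convergence of the quadratic terms ${\utwo}_k'\otimes{\utwo}_k'$ and $\nabla\dd_k'\odot\nabla\dd_k'$ in $L^{3/2}_{loc}$) plus a harmonic correction that is smooth and converges in every interior norm. A secondary technical point is making the rescaling of the auxiliary field ${\uone}$ and of $f(\dd)$ genuinely subcritical: this is exactly where the $C_0 r^3$ safeguard and the pointwise bounds $|{\uone}|,|\dd|\le M$ enter, ensuring these perturbations either vanish in the limit or are dominated by $C_0 r^3$ after unscaling.
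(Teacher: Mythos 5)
Your plan is essentially the paper's proof: contradiction, parabolic rescaling normalized by the excess, uniform bounds from the local energy inequality with a cut-off, Aubin--Lions to upgrade to strong $L^3$ convergence, a limiting linear Stokes/heat system with Campanato-type decay, and the Calder\'on--Zygmund-plus-harmonic decomposition of the blown-up pressure with the mean-value property on the harmonic part. One small mechanism you have slightly off: in the paper the director $\dd$ is rescaled in the independent variables only (not in amplitude before centering), so the Ginzburg--Landau term does not vanish because of ``vanishing amplitude of $\dd_k'$'' but because it carries the explicit prefactor $r_i^2/\varepsilon_i$ in the blown-up equation, which tends to zero thanks to the inequality $r_i\lesssim (\varepsilon_i^3/C_i)^{1/3}$; similarly $\widehat{\uone}^i$ is controlled by $Mr_i/\varepsilon_i\to 0$ and the advection term additionally carries an explicit $\varepsilon_i$ prefactor --- both the scaling factor and the $C_0r^3$ safeguard do the work, not a smallness of $\uone$ or $\dd$ themselves.
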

\begin{proof}
	Proof by contradiction. Suppose that 
 it is false. Then there exists an $M_0>0$ such that for any $\tau\in(0, \frac{1}{2})$, there exist  
   sequences
 $\varepsilon_i\to 0$, $C_i\to \infty$, $r_i>0$, and $z_i = (x_i,t_i)\in \T^3\times(r_i^2, \infty)$ such that 
	\begin{equation}
		\left\{
		\begin{array}{l}
			|{\uone}|\le M_0, ,\quad |{\dd}|\le M_0 \text{ in }\Q_{r_i}(z_i), \\
			\Theta_{z_i}(r_i)=\varepsilon_i^3, \\
			\Theta_{z_i}(\tau r_i)>\frac{1}{2}\max\{\varepsilon_i^3, C_i r_i^3\}.
		\end{array}
		\right.
		\label{eqn:blowupassum}
	\end{equation}
	First we notice that $C_i r_i^3\le 2 \Theta_{z_i}(\tau r_i)\le 2\tau^{-4} \Theta_{z_i}(r_i)=2\tau^{-4}\varepsilon_i^3$.  This implies that 
	$$r_i\le \left( \frac{2\varepsilon_i^3}{C_i \tau^4} \right)^{\frac{1}{3}}\to 0.$$
	Next, we define the rescaling sequence
	$$({{\uone}}^i, {{\utwo}}^i, {{\dd}}^i, \ptwo^i)(x, t)=(r_i {\uone}, r_i {\utwo}, {\dd}, r_i^2 {\ptwo})(x_i+r_i x, t_i+r_i^2 t).$$
	It is straightforward to verify that $({\uone}^i, {\utwo}^i, {\dd}^i, {\ptwo}^i)$ satisfies the rescaled equation 
	\begin{equation}
		\left\{
		\begin{array}{l}
			\pa_t {\utwo}^i+({\uone}^i+{\utwo}^i)\cdot \nabla ({\uone}^i+{\utwo}^i)+\nabla \ptwo^i\\
\qquad \qquad \qquad  =\Delta \utwo^i-\div\left( \nabla {\dd}_i\odot \nabla {\dd}^i-\frac{1}{2}|\nabla {\dd}^i|^2{\rm I}_3-r_i^2 F({\dd}^i){\rm I}_3 \right), \\
			\nabla\cdot  {\utwo}^i=0, \\
			\pa_t {\dd}^i+({\uone}^i+{\utwo}^i)\cdot \nabla {\dd}^i=\Delta {\dd}^i-r^2_i f({\dd}^i).
		\end{array}
		\right.
		\label{eqn:rescaledEQ}
	\end{equation}
	And \eqref{eqn:blowupassum} becomes
	\begin{equation}
		\left\{
		\begin{array}{l}
			\displaystyle \int_{\Q_1(0)}(|{\utwo}^i|^3+|\nabla {\dd}^i|^3)+\left( \int_{\Q_1(0)}|\ptwo^i|^{\frac{3}{2}} \right)^{2}=\varepsilon_i^3, \\
			\displaystyle \tau^{-2}\int_{\Q_\tau(0)}(|{\utwo}^i|^3+|\nabla {\dd}^i|^3)+\left( \tau^{-2}\int_{\Q_\tau(0)}|\ptwo^i|^{\frac{3}{2}} \right)^3>\frac{1}{2}\max\{\varepsilon_i^3, C_i r_i^3\}. 
		\end{array}
		\right.
		\label{eqn:rescalenergy}
	\end{equation}
	Then we define the blow-up sequence 
	$$(\widehat{\uone}^i, \widehat{\utwo}^i, \widehat{\dd}^i, \hP^i)=\left( \frac{{\uone}^i}{\varepsilon_i}, \frac{{\utwo}^i}{\varepsilon_i}, \frac{{\dd}^i-\bar{{\dd}^i}}{\varepsilon_i} , \frac{{\ptwo}^i}{\varepsilon_i}\right), $$
	where $\bar{{\dd}^i}={\fint}_{\Q_1(0)}{\dd}^i$, which denotes the average of ${\dd}^i$ over $\Q_1(0)$. Then it turns out that $(\widehat{\uone}^i, \widehat{\utwo}^i, \widehat{\dd}^i, \widehat{\ptwo}^i)$ solves the following equation:
	\begin{equation}
		\left\{
		\begin{array}{l}
			\pa_t \widehat{\utwo}^i+\ve_i(\widehat{\uone}^i+\widehat{\utwo}^i)\cdot \nabla(\widehat{\uone}^i+\widehat{\utwo}^i)+\nabla \widehat{\ptwo}^i\\
\quad \quad \quad 
=\Delta \widehat{\utwo}^i
  -\div\left( \varepsilon_i  \nabla \widehat{\dd}^i\odot \nabla \widehat{\dd}^i-\frac{\varepsilon_i}{2}|\nabla \widehat{\dd}^i|^2{\rm I}_3 -\frac{r_i^2}{\varepsilon_i}F({\dd}^i){\rm I}_3\right), \\
			\div \widehat{\utwo}^i=0, \\
			\pa_t \widehat{\dd}^i+\ve_i(\widehat{\uone}^i+ \widehat\utwo^i)\cdot \nabla \widehat{\dd}^i=\Delta \widehat{\dd}^i-\frac{r_i^2}{\ve_i}f(\dd^i).
		\end{array}
		\right.	\label{eqn:blowingupEQ}
	\end{equation}
	Furthermore, we have 
	\begin{equation}
\begin{aligned}
		\left\{
		\begin{array}{l}
			\int_{\Q_1(0)}\widehat{\dd}^i=0, \\
			\int_{\Q_1(0)}\left( |\widehat{\utwo}^i|^3+|\nabla\widehat{\dd}^i|^3 \right)+\left( \int_{\Q_1(0)}|\widehat{\ptwo}^i|^{\frac{3}{2}} \right)^2=1, \\
			\tau^{-2}\int_{\Q_\tau(0)}\left( |\widehat{\utwo}^i|^3+|\nabla \widehat{\dd}^i|^3 \right) +\left( \tau^{-2}\int_{\Q_\tau(0)}|\widehat{\ptwo}^i|^{\frac{3}{2}} \right)^2\\
 \quad\quad > 
{
\frac{1}{2}\max \left \{1, C_i\left( \frac{r_i}{\ve_i} \right)^3 \right\}.
}
%
%
%
		\end{array}
		\right.
		\label{eqn:blowingupEnergy}
\end{aligned}
	\end{equation}
	From \eqref{eqn:blowingupEnergy}, by weak compactness, we can show there exists
	$$(\widehat{\utwo}, \widehat{\dd}, \widehat{\ptwo})\in L^3(\Q_1(0))\times L_t^3 W_x^{1, 3}(\Q_1(0))\times L^{\frac{3}{2}}(\Q_1(0)),$$
	such that, after passing to 
the limit upon
a subsequence, we have
	$$(\widehat{\utwo}^i, \widehat{\dd}^i, \widehat{\ptwo}^i)\rightharpoonup (\widehat{\utwo}, \widehat{\dd}, \widehat{\ptwo}) \text{ in }L^3(\Q_1(0))\times L_t^3 W_x^{1, 3}(\Q_1(0))\times L^{\frac{3}{2}}(\Q_1(0)).$$
	It follows from \eqref{eqn:blowingupEnergy} and the lower semicontinuity that 
	$$\int_{\Q_1(0)}\left( |\widehat{\utwo}|^3+|\nabla\widehat\dd|^3 \right)+\left( \int_{\Q_1(0)}|\widehat{\ptwo}|^{\frac{3}{2}} \right)\le 1.$$
	We claim that 
	$$\sup_{i}\left( \|\widehat{\utwo}^i\|_{L_t^\infty L_x^2\cap L_t^2 H_x^1( {\Q_{1/2}(0)})}+\|\widehat{\dd}^i\|_{L_t^\infty H_x^1\cap L_t^2 H_x^2( {\Q_{1/2}(0)})} \right)<\infty.$$
	We choose a cut-off function $\varphi\in C_0^\infty(\Q_1(0))$ 
such that 
	$$0\le \varphi\le 1, \quad \varphi\equiv 1\text{ on }\Q_{\frac{1}{2}}(0), \quad \text{ and }|\pa_t\varphi|+|\nabla \varphi|+|\nabla^2 \varphi|\le C.$$
	Define $\varphi_i(x, t)=\varphi\left( \frac{x-x_i}{r_i}, \frac{t-t_i}{r_i^2} \right)$, $\forall (x, t)\in \T^3\times(0, \infty).$ Replacing $\varphi$ by $\varphi_i^2$ in the local energy inequality \eqref{eqn:LocEngy}  we can show  that
	\begin{align*}
		&\sup_{t_i-\frac{r_i^2}{4}\le t\le t_i}\int_{B_{r_i}(x_i)}\left(| {\utwo} |^2+|\nabla  {\dd} |^2+F( {\dd} )\right)\varphi_i^2 {\rd x}\\
&+\int_{\Q_{r_i/2}(z_i)}\left( |\nabla  {\utwo} |^2+|\Delta  {\dd} |^2 +|f( {\dd} )|^2 \right)\varphi_i^2 \\
	\le	& C\int_{\Q_{r_i}(z_i)}\left( | {\utwo} |^2+|\nabla  {\dd} |^2+F( {\dd} ) \right)|\pa_t \left (\varphi_i^2\right)|\\
		&+C\int_{\Q_{r_i}(z_i)}|( {\uone} + {\utwo} )|| {\uone} ||\nabla  {\utwo} ||\nabla \left (\varphi_i^2\right)|+
  | {\ptwo} || {\utwo} ||\nabla \left (\varphi_i^2 \right)|+\left( | {\utwo} |^2+|\nabla  {\dd}  |^2\right)\Delta \left (\varphi_i^2\right)
		\\
		&+C\int_{\Q_{r_i}(z_i)}|\nabla  {\dd} |^2 |\nabla^2 \left (\varphi_i^2\right)|+|  {\uone} ||\nabla  {\dd} ||\Delta  {\dd} -f( {\dd}  )|^2 |\varphi_i^2|+| {\uone} + {\utwo} ||\nabla  {\dd} |^2 |\nabla \left (\varphi_i^2\right)|\\
		&+C\int_{\Q_{r_i}(z_i)}|f( {\dd} )||\nabla  {\dd} ||\nabla \left (\varphi_i^2\right)|+|\nabla f( {\dd} )||\nabla  {\dd} | \varphi_i^2. 
	\end{align*}
 	By rescaling and Young's inequality, we can show that for every $i$, 
	\begin{align}\label{eq:young1}
		 \sup_{-\frac{1}{4}\le t\le 0}\int_{B_{\frac{1}{2}(0)}}(|\widehat{\utwo}^i|^2+|\nabla\widehat{\dd}^i|^2)+\int_{\Q_{\frac{1}{2}}(0)}(|\nabla \widehat{\utwo}^i|+|\nabla^2\widehat{\dd}^i|^2) 
		 \le C.
	\end{align}
	Hence the claim holds.  {Now we intend to apply  the Aubin–Lions lemma; up to now what we still need is to   obtain estimates on $\partial_t \widehat{\utwo}^i  $ and $\partial_t   \widehat{\dd}^i$}. To achieve this goal, 
%
{
firstly,
}
we observe that from the equations for $\widehat{\utwo}^i$ in \eqref{eqn:blowingupEQ}$_{1,2}$, we
can show that,  for
any test functions $\zeta\in C_{c, \rm{div}}^\infty (B_{\frac{1}{2}(0)};\mathbb{R}^3)$ we have 
\begin{equation}\label{eq:part1}
\begin{aligned}
 \left|\int_{Q_{\frac{1}{2}}(0)}\partial_t \widehat{\utwo}^i\cdot \zeta \right|&\le\varepsilon_i\left|\int_{Q_{\frac{1}{2}}(0)}( \widehat{\uone}^i+\widehat{\utwo}^i)\cdot((\widehat{\uone}^i+\widehat{\utwo}^i)\cdot \nabla \zeta)\right|+\left|\int_{Q_{\frac{1}{2}}(0)}\nabla \widehat{\utwo}^i:\nabla \zeta\right|\\
 &+\left|\int_{Q_{\frac12(0)}}\varepsilon_i\left(\nabla \widehat{\dd}^i\odot \nabla \widehat{\dd}^i-\frac{\varepsilon_i}{2}|\nabla \widehat{\dd}^i{\rm I_3}-\frac{r_i^2}{\varepsilon_i}F({\dd}^i){\rm I}_3\right):\nabla \zeta\right|\\
 &\le C\|\widehat{\uone}^i+\widehat{\utwo}^i\|_{L^\infty_tL_x^2(Q_{\frac12}(0))}^\frac12\|\   \widehat{\uone}^i +\widehat{\utwo}^i\|^\frac32_{L_t^2 L_x^6(Q_{\frac12}(0))}\|\nabla\zeta\|_{L_t^4{L_x^2}(Q_{\frac{1}{2}}(0))}\\
 &+C\|\widehat{\utwo}^i\|_{L_t^2H_x^1(Q_{\frac{1}{2}}(0))}\|\nabla \zeta\|_{L_t^2L_x^2(Q_{\frac12}(0))}\\
 &+C\|\nabla \widehat{\dd}^i\|_{L_t^\infty L_x^2(Q_{\frac12}(0))}^\frac12\|\nabla \widehat{\dd}^i\|_{L_t^2 L_x^6(Q_{\frac12}(0))}^\frac32\|\nabla \zeta\|_{L_t^4{L_x^2}(Q_{\frac12}(0))}\\
 &+C\|F( {\dd}^i)\|_{L_t^2 L_x^2 (Q_{\frac12}(0))}\|\nabla \zeta\|_{L_t^2 L_x^2 (Q_{\frac12}(0))}\\
 &\le C\|\zeta\|_{L_t^4{\bf V}(Q_{\frac12}(0))}, 
\end{aligned}
\end{equation}
where we have used the uniform estimates already obtained in \eqref{eq:young1}, together with the uniform $L^\infty$ bounds for ${\dd} $ and $ {\uone}$ provided in \eqref{eqn:blowupassum}.
Then from \eqref{eq:part1} we obtain the following uniform estimate:
\begin{equation}\label{eq:key1}
  \|\partial_t  \widehat{\utwo}^i    \|_{L_t^{\frac43}{\bf V}'(Q_{\frac12}(0))}\le C, \quad \forall i=1,2, ,\cdots.
\end{equation}
Now for the equations for $\widehat{\dd}^i$ in \eqref{eqn:blowingupEQ}$_{3}$, similarly, we can conclude that for any test function 
%
{$\phi\in C_0^\infty(B_{\frac{1}{2}}(0))$}
 we have
\begin{equation*}
    \begin{aligned}
        \left|\int_{Q_{\frac12}(0)}\partial_t \widehat{\dd}^i\cdot \phi\right|&\le \varepsilon_i\left|\int_{Q_{\frac12}(0)}((\widehat{\uone}^i+\widehat{\utwo}^i    )\otimes \widehat{\dd}^i):\nabla \phi \right|+\left|\int_{Q_{\frac12}(0)}\nabla \widehat{\dd}^i:\nabla \phi\right|\\
        &+\frac{r_i^2}{\varepsilon_i}\left|\int_{Q_{\frac12}(0)}f( {\dd}^i)\cdot \phi\right|\\
        &\le C\|\widehat{\uone}^i+ \widehat{\utwo}^i   \|_{L_t^2 L_x^2(Q_{\frac12}(0))}\|\widehat{\dd}^i\|_{L_t^\infty L_x^\infty(Q_{\frac12}(0))}\|\nabla \phi\|_{L_t^2 L_x^2 (Q_{\frac12}(0))}\\
        &+C\|\nabla \widehat{\dd}^i\|_{L_t^2L_x^2(Q_{\frac12}(0))}\|\nabla \phi\|_{L_t^2 L_x^2(Q_{\frac12}(0))}
\\
&+C\|f( {\dd}^i)\|_{L_t^2 L_x^2 (Q_{\frac12}(0))}\|\phi\|_{L_t^2 L_x^2 (Q_{\frac12}(0))}\\
        &\le C\|\phi\|_{L_t^2 H_x^1(Q_{\frac12}(0))}.
    \end{aligned}
\end{equation*}
This implies that 
\begin{equation}\|\partial_t   \widehat{\dd}^i \|_{L_t^2 H_x^{-1}(Q_{\frac12}(0))}\le C, \quad \forall i=1, 2, \cdots. 
\end{equation}

With the estimates on  
 $\partial_t \widehat{\utwo}^i  $ and $\partial_t   \widehat{\dd}^i$
 in hand, now we are ready to apply  the Aubin–Lions lemma. We find  that 
	$$(\widehat{\utwo}^i, \widehat{\dd}^i)\to (\widehat{\utwo}, \widehat{\dd}) \text{ strongly in }L^3(\Q_{\frac{1}{2}}(0))\times L_t^3W_x^{1, 3}(\Q_{\frac{1}{2}}(0)).$$
	We observe that $(\widehat{\utwo}, \widehat{\dd},\widehat{\ptwo})$ solves the
{linear}
system
	\begin{equation}
		\left\{
		\begin{array}{l}
			\pa_t \widehat{\utwo}+\nabla\widehat{\ptwo}=\Delta\widehat{\utwo}, \\
			\nabla\cdot \widehat{\utwo}=0, \\
			\pa_t \widehat{\dd}=\Delta \widehat{\dd}.
		\end{array}
		\right.
		\label{eqn:linearEQ}
	\end{equation}
	By the standard interior regularity estimate we
can conclude that
	$({ {\widehat{\utwo}}}, \hd)\in C^\infty(\Q_{\frac{1}{4}}(0))$ and  $\widehat{\ptwo}\in L^\infty([-\frac{1}{16}, 0], C^\infty(B_{\frac{1}{4}}(0)))$ with the following estimate:
	\begin{equation}
		\begin{aligned}
			&\tau^{-2}\int_{\Q_{\tau}(0)}(|{ {\widehat{\utwo}}}|^3+|\nabla \hd|^3)+\left( \tau^{-2}\int_{\Q_\tau(0)}|\hP|^{\frac{3}{2}} \right)^2\\
			&\le C\tau^3 \left[ \int_{\Q_{\frac{1}{2}}(0)}(|{ {\widehat{\utwo}}}|^3+|\nabla \hd|^3)+\left( \int_{\Q_{\frac{1}{2}}(0)}|\hP|^{\frac{3}{2}} \right)^2 \right]\\
			&\le C\tau^{3}, \forall \tau\in\left( 0,\frac{1}{8} \right).
		\end{aligned}
		\label{eqn:regularityLinearEq}
	\end{equation}
	From the strong convergence of $(\widehat{\utwo} ^i, \widehat{\dd}^i)$ we infer that 
	$$\tau^{-2}\int_{\Q_\tau(0)}(|{ {\widehat{\utwo}}}^i|^3+|\nabla\widehat{\dd}^i|^3)=\tau^{-2}\int_{\Q_\tau(0)}(|\widehat{\utwo}|^3+|\nabla \widehat{\dd}|^3)+\tau^{-2}o(i),$$
	where $o(i)\to 0$ as $i\to \infty$. It remains to estimate the pressure term $\hP^i$. Taking the divergence in \eqref{eqn:blowingupEQ}$_1$, we see that 
	\begin{equation}
\begin{aligned}
		&-\Delta\hP^i\\
&=\varepsilon_i\div^2\left[ {\left (\widehat{\uone}^i+\widehat{\utwo}^i\right)}\otimes {\left (\widehat{\uone}^i+\widehat{\utwo}^i\right)}+\nabla\widehat{\dd}^i\odot \nabla\widehat{\dd}^i-\frac{1}{2}|\nabla\widehat{\dd}^i|^2{\rm I}_3-\frac{r_i^2}{\ve_i^2}F({\dd}^i){\rm I}_3 \right].
		\label{eqn:blowP}
\end{aligned}
	\end{equation}
	We need to show that 
	\begin{equation}
		\tau^{-2}\int_{\Q_\tau(0)}|\hP^i|^{\frac{3}{2}}\le C\tau^{-2}(\ve_i+o(i))+C\tau.
		\label{eqn:Pesti}
	\end{equation}
	This will lead  to a contradiction of \eqref{eqn:blowingupEnergy}$_2$ if we chose sufficient small $\tau_0\in(0, \frac{1}{4})$ and sufficient large $i_0$ such that for $i\ge i_0$, it holds that 
 \begin{equation}
	\tau_0^{-2}\int_{\Q_{\tau_0}(0)}\left( |\widehat{\utwo}^i|^3+|\nabla\widehat{\dd}^i|^3 \right)+\left( \tau_0^{-2}\int_{\Q_{\tau_0}(0)}|\hP_i|^{\frac{3}{2}} \right)^2\le \frac{1}{4}. \label{eqn:Contra}
 \end{equation}
    To prove \eqref{eqn:Pesti}, let $\eta\in C_0^\infty(B_1(0))$
%
 be a cut-off function of $B_{\frac{3}{8}}(0)$. For any $t$, $-(\frac{3}{8})^2\le t\le 0$, we define 
  \begin{align*}
&\hP_{(1)}^i(x, t)\\
=&\int_{\T^3}\nabla_x^2 G(x-y)\eta(y)\varepsilon_i\left[  {\left (\widehat{\uone}^i+\widehat{\utwo}^i\right)}   \otimes {\left (\widehat{\uone}^i+\widehat{\utwo}^i\right)}+\nabla\widehat{\dd}^i\odot\nabla\widehat{\dd}^i-\frac{1}{2}|\nabla \widehat{\dd}^i|^2 {\rm I}_3-\frac{r_i^2}{\ve_i^2}F({\dd}^i){\rm I}_3 \right](y, t){\rd y},
\end{align*}
    where $G$ is the Green function of $-\Delta$ for $\T^3$. Then it is straightforward to see that $\hP_{(2)}^i(\cdot , t):=(\hP^i-\hP_{(1)}^i)(\cdot , t)$ satisfies
    \begin{equation}
      -\Delta \hP_{(2)}^i=0 \text{ in }B_{\frac{3}{8}}(0). 
      \label{eqn:hP2}
    \end{equation}
Applying the Calderon--Zygmund theory we can show that 
\begin{align}
  \left\|\hP_{(1)}^i\right\|_{L^{\frac{3}{2}}(\T^3)} &\le C\ve_i\left[ \left\|
 { \widehat{\uone}^i+\widehat{\utwo}^i }
\right\|_{L^3(B_1(0))}^2+\left\|\nabla \widehat{\dd}^i\right\|_{L^3(B_1(0))}^2+\frac{r_i^2}{\ve_i^2}\|F(\dd^i)\|_{L^{\frac{3}{2}}(B_1(0))} \right] \nonumber\\
&\le C(\ve_i+o(i)).
  \label{eqn:hP1}
\end{align}
From the mean value property of harmonic functions, we infer that for $0<\tau<\frac{1}{4}$, 
\begin{align}
\tau^{-2}\int_{\Q_\tau(0)}|\hP_{(2)}^i|^{\frac{3}{2}}&\le C\tau\int_{\Q_{\frac{1}{3}}(0)}|\hP_{(2)}^i|^{\frac{3}{2}} \nonumber\\
&\le C\tau\int_{\Q_{\frac{1}{3}}(0)}(|\hP^i|^{\frac{3}{2}}+|\hP_{(1)}^i|^{\frac{3}{2}}) \nonumber\\
&\le C\tau(1+\ve_i+o(i)).\label{eqn:hP2est}
\end{align}
Combining \eqref{eqn:hP1} and \eqref{eqn:hP2est} yields \eqref{eqn:Pesti}, and the contradiction \eqref{eqn:Contra} is achieved.  
\end{proof}

\subsection{An almost boundedness result}

By iterating Lemma \ref{lemma:smallenergyimprv} and utilizing the Riesz potential in Morrey spaces (see \cite{Adams1975,HuangWangNote2010}) we can show the following local $L^p$ estimate for all $p>1$: 
\begin{lemma}
  For any $M>0$, there exists $\ve_0>0$ depending on $M$, such that  for a suitable weak solution $({\utwo}, {\ptwo}, {\dd})$ of \eqref{eqn:SELu2} in ${\T^3}\times(0, \infty)$, if,  for $z_0=(x_0, t_0)\in {\T^3}\times(r_0^2,  \infty)$, and $r_0>0$,
   it holds that $|\uone|\le M, |{\dd}|\le M$ in $\Q_{r_0}(z_0)$,  and $\Theta_{z_0}(r_0)\le \ve_0^3$, then for any $1<p<\infty$, it holds
  true that
  $$\|(\utwo, \ptwo, \nabla {\dd})\|_{L^p(\Q_{\frac{r_0}{4}}(z_0))}\le C(p, \ve_0, M).$$
  \label{lemma:MorreyEst}
\end{lemma}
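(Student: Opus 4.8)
The proof is a standard bootstrap argument built on iterating the $\varepsilon$-regularity lemma (Lemma \ref{lemma:smallenergyimprv}) to obtain decay of the scaled quantity $\Theta_{z_0}$, then converting that decay into a Morrey-space bound, and finally feeding it through a Riesz-potential estimate in Morrey spaces to upgrade integrability to arbitrary $L^p$. First I would fix $\varepsilon_0, \tau_0, C_0$ as in Lemma \ref{lemma:smallenergyimprv}. The hypothesis $\Theta_{z_0}(r_0)\le \varepsilon_0^3$ together with $|\uone|\le M$, $|\dd|\le M$ in $\Q_{r_0}(z_0)$ lets us apply the lemma; but to iterate we need the smallness and the boundedness hypotheses to persist at every dyadic (with ratio $\tau_0$) scale and at every nearby center. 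For the boundedness this is immediate since $\Q_{\tau_0^k r}(z)\subset \Q_{r_0}(z_0)$ for $z$ near $z_0$; for the smallness one checks by induction that $\Theta_z(\tau_0^k r_0)\le \max\{\varepsilon_0^3, C_0 r_0^3\}$ stays below the threshold provided $r_0$ is chosen small enough that $C_0 r_0^3\le \varepsilon_0^3$ (if $r_0$ is not already that small one simply works at the scale $r_0/2$, or absorbs the $C_0 r^3$ term which is lower order). Carrying this out uniformly for all $z_0'\in \Q_{r_0/2}(z_0)$ gives, for each such center, $\Theta_{z_0'}(\rho)\le C\max\{(\rho/r_0)^{2\theta}\varepsilon_0^3,\, C_0\rho^3\}$ for all $\rho\le r_0/2$, where $\theta=\theta(\tau_0)>0$ comes from summing the geometric decay factors $\tfrac12$; the standard interpolation-between-dyadic-scales argument fills in non-dyadic $\rho$.

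**From decay to Morrey bounds.** The estimate $\rho^{-2}\int_{\Q_\rho(z_0')}(|\utwo|^3+|\nabla\dd|^3)\le C\rho^{2\theta}+C\rho^3$, valid for all centers $z_0'$ in a fixed sub-cylinder and all small radii $\rho$, says precisely that $|\utwo|^3+|\nabla\dd|^3$ lies in the parabolic Morrey space $M^{1,\lambda}$ for $\lambda=3-\min\{2\theta,3\}<3$ (dimension being $5$ for parabolic cylinders in $\T^3\times\R$), and similarly $|\ptwo|^{3/2}\in M^{1,\lambda'}$ for the corresponding $\lambda'$. Equivalently $(\utwo,\nabla\dd)\in M^{3,\lambda}$ and $\ptwo\in M^{3/2,\lambda'}$ on $\Q_{r_0/2}(z_0)$ with gains in the Morrey exponents over the Lebesgue scaling. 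This is the step where the uniform-in-center version of the iteration is essential; without it one only controls a single point and cannot get a function-space membership.

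**Bootstrapping via Riesz potentials in Morrey spaces.** Now write the equations \eqref{eqn:SELu2} (in the re-scaled/localized form, cut off by a suitable $\varphi$) as a linear heat/Stokes system with right-hand side built from the nonlinearities $(\uone+\utwo)\otimes(\uone+\utwo)$, $\nabla\dd\odot\nabla\dd$, $\nabla\ptwo$-type terms, $F(\dd)$, and $f(\dd)$, all of which — using $|\uone|,|\dd|\le M$ and the Morrey bounds just obtained — lie in Morrey spaces strictly better than the baseline. Applying the Duhamel formula, the solution and its first derivative in $\dd$ are controlled by parabolic Riesz potentials $I_1, I_2$ acting on these data; the key input is the Adams-type estimate (cf. \cite{Adams1975,HuangWangNote2010}) that $I_\alpha: M^{q,\lambda}\to M^{\tilde q,\lambda}$ with $\tfrac{1}{\tilde q}=\tfrac1q-\tfrac{\alpha}{\lambda}$ boosts the integrability exponent while keeping (or improving) the Morrey scaling. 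Iterating finitely many times — each step trading a fixed amount of the Morrey gap $\lambda<5$ for a jump in $q$, and re-estimating the pressure through \eqref{eqn:approPeq}-type Calder\'on--Zygmund bounds in Morrey spaces at each stage — pushes $(\utwo,\ptwo,\nabla\dd)$ into $L^p(\Q_{r_0/4}(z_0))$ for any prescribed finite $p$, with a constant depending only on $p$, $\varepsilon_0$ and $M$.

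**Main obstacle.** The technical heart is the uniform, center-independent iteration of Lemma \ref{lemma:smallenergyimprv}: one must verify that the smallness hypothesis $\Theta_z(\cdot)\le\varepsilon_0^3$ truly propagates through all scales and all nearby centers simultaneously — handling the $\max\{\cdot,\,C_0 r^3\}$ branch carefully (it is the term that forces the extra $C\rho^3$, i.e. the $M^{q,\lambda}$ rather than borderline-Morrey membership) — and then correctly bookkeeping the pressure at every bootstrap stage, since $\ptwo$ is only defined nonlocally through its Poisson equation and its Morrey regularity must be re-derived after each improvement of $(\utwo,\nabla\dd)$. The Riesz-potential iteration itself is routine once the right Morrey spaces are identified, but matching exponents so that each step is a genuine gain (and the process terminates after finitely many steps for any given $p$) requires care with the parabolic scaling dimensions.
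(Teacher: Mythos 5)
Your proposal follows the paper's proof in all essential respects: iterate Lemma \ref{lemma:smallenergyimprv} uniformly over centers in $\Q_{r_0/2}(z_0)$ to obtain parabolic Morrey decay for $(\utwo,\nabla\dd,\ptwo)$, write the director and velocity equations as a cut-off heat/Stokes system with Morrey-space right-hand side, upgrade integrability via the Adams/Riesz-potential estimate in parabolic Morrey spaces (heat kernel for $\dd$, Oseen kernel for $\utwo$, with caloric/Stokes remainders), and recover $\ptwo$ from the associated Poisson equation. The paper implements the bootstrap by sending the Morrey parameter $\alpha\uparrow\tfrac12$ (so that $\tfrac{3(1-\alpha)}{1-2\alpha}\to\infty$) rather than explicitly iterating in the integrability exponent as you sketch, but this is only a bookkeeping difference and your outline is sound.
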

\begin{proof}
  From the assumption, we have that $\Phi_{z}(r_0/2)\le 8\ve_0^3$ holds for any $z\in \Q_{\frac{r_0}{2}}(z_0)$. Then we apply Lemma \ref{lemma:smallenergyimprv} repeatedly on $\Q_{\frac{r_0}{2}}(z)$, and we have that for any $k\ge 1$ 
 and $\tau_0 \in (0, 1/4)$,
  $$\Theta_{z}(\tau_0^k r_0)\le 2^{-k}\max\{\Theta_{z}\left( \frac{r_0}{2} \right), \frac{C_0 r_0^3}{1-2 \tau_0^3}\}.$$
  Hence, it holds for $0<s<\frac{r_0}{2}$, and 
  $z\in \Q_{\frac{r_0}{2}}(z_0)$  that 
  \begin{equation}
      s^{-2}\int_{\Q_s(z)}(|\utwo|^2+|\nabla {\dd}|^3+|\ptwo|^{\frac{3}{2}})\le C(1+\ve_0^3)\left( \frac{s}{r_0} \right)^{3\theta_0},\label{eqn:Morrey1}
  \end{equation}
  for $\theta_0=\frac{\ln 2}{3|\ln \tau_0|}\le (0, \frac{1}{3})$. 
  Now we apply the local energy inequality and boundedness of $(\uone, {\dd})$ on $\Q_{\frac{r_0}{2}}(z_0)$, for $z\in \Q_{\frac{r_0}{2}}(z_0)$ to show that 
  \begin{equation}
  \begin{aligned}
    &s^{-1}\int_{\Q_s(z)}(|\nabla {\utwo}|^2+|\nabla^2 {\dd}|^2)\\
    \le & C\left[ (2s)^{-3}\int_{\Q_{2s}(z)}(|{\utwo}|^2+|\nabla {\dd}|^2)+(2s)^{-2}\int_{\Q_{2s}(z)}(|{\utwo}|^3+|\nabla {\dd}|^2+|P|^{\frac{3}{2}}) \right]\\
    \le & C(1+\ve_0^3)\left( \frac{s}{r_0} \right)^{2\theta_0}.
  \end{aligned}\label{eqn:Morrey2}
  \end{equation}
 For any open set $U\subset \T^3\times \R_+$, $1\le p<\infty$, $0\le \lambda\le 5$, the Morrey space $M^{p, \lambda}$ is defined by 
 \begin{equation*}
     M^{p, \lambda}(U):=\left\{f\in L^p_{\rm loc}(U):\|f\|^p_{M^{p, \lambda}(U)}=\sup_{z\in U, r>0} r^{\lambda-5}\int_{\Q_r(z)}|f|^p\rd x\rd t<\infty\right\}.
 \end{equation*}
 Then \eqref{eqn:Morrey1} and \eqref{eqn:Morrey2}
read  \begin{equation*}
  	 \left\{
  	   \begin{aligned}
  	 	&({\utwo}, \nabla {\dd})\in M^{3, 3(1-\alpha)}\left( \Q_{\frac{r_0}{2}}(z_0) \right), \\
  	 	&\ptwo\in M^{\frac{3}{2}, 3(1-\alpha)}\left( \Q_{\frac{r_0}{2}}(z_0) \right), \\
  	 	&(\nabla {\utwo}, \nabla^2 {\dd})\in M^{2, 4-2\alpha}(\Q_{\frac{r_0}{2}}(z_0)).  
  	 \end{aligned}
  	 \right.
  \end{equation*}
  Now we view \eqref{eqn:SEL0}$_3$ as
  $$\pa_t{\dd}-\Delta {\dd}=-{\uu}\cdot \nabla {\dd}-f({\dd})\in M^{\frac{3}{2}, 3(1-\alpha)}\left( \Q_{\frac{r_0}{2}}(z_0) \right).$$
  Let $\eta$ be the cut off function of $\Q_{\frac{r_0}{2}}(z_0)$ with $|\pa_t \eta|+|\nabla^2 \eta|\le C r_0^{-2}$, let $\widecheck{\bf d}=\eta^2({\dd}-({\dd})_{z_0, r_0})$, 
  where $({\dd})_{z_0, r_0}$ is the average of ${\dd}$ over $\Q_{\frac{r_0}{2}}(z_0)$. Then we have 
  $$\pa_t \widecheck{\dd}-\Delta \widecheck{\bf d}=\mathsf{F}, \quad \mathsf{F}:=\eta^2 (-{\uu}\cdot \nabla {\dd}-f({\dd}))+(\pa_t \eta^2-\Delta \eta^2)({\dd}-({\dd})_{z_0, r_0})-\nabla \eta^2\cdot \nabla {\dd},$$
where {$\mathsf{F}$} satisfies 
$$\|\mathsf{F}\|_{M^{\frac{3}{2}, 3(1-\alpha)}(\R^4)}\le C(1+\ve_0).$$
By the Duhamel formula, we have that 
\begin{equation}\label{eq:du}
|\nabla\widecheck{\bf d}(x, t)|\le \int_{0}^{t}\int_{\R^3}|\nabla \Gamma(x-y, t-s)||\mathsf{F}(y, s)|dyds\le C \mathcal{I}_1(|\mathsf{F}|)(x,t),
\end{equation}
where $\Gamma$ is the heat kernel in $\R^3$, and $\mathcal{I}_\beta$ is the Riesz potential of order $\beta$ on $\R^4$, $\beta\in [0,4]$, defined by 
$$I_\beta(g)(x, t):=\int_{\R^4}\frac{|g(y, s)|}{\delta_p^{5-\beta}( (x ,t), (y, s))}dyds.$$
 {Applying}
the Riesz potential estimates (for a reference on the Riesz potential estimates, see e.g. \cite{Huang2010}), we conclude that 
\begin{equation}
    \|\nabla {\widecheck{\bf d}}\|_{M^{\frac{3(1-\alpha)}{1-2\alpha},3(1-\alpha)}(\R^4)}\le C\|\mathsf{F}\|_{M^{\frac{3}{2}, 3(1-\alpha)}(\R^4)}\le C(1+\ve_0).\label{eqn:dcheckest}
\end{equation}
Since ${\dd}-{\widecheck{\bf d}}$ is caloric in $\Q_{\frac{r_0}{2}}(z_0)$, 
\begin{equation}
    \|\nabla ({\dd}-{\widecheck{\bf d}})\|_{L^p(\Q_{\frac{r_0}{2}(z_0)})}\le C(p, r_0, \ve_0).\label{eqn:dminusdcheck}
\end{equation}
On the other hand, let $\widecheck{\utwo}$ 
solve the following Stokes equations
\begin{equation}
  \left\{
  \begin{array}{l}
    \pa_t\widecheck{\utwo}-\Delta\widecheck{\utwo}+\nabla \widecheck{\ptwo}\\
    =-\nabla\cdot[\eta^2({\uu}\otimes {\uu})+(\nabla {\dd}\odot \nabla {\dd}-\frac{1}{2}|\nabla {\dd}|^2 {\rm I}_3)-(F({\dd})-(F({\dd}))_{z_0, r_0}){\rm I}_3]\\
=:-\div \mathsf{X}, \\\\
    \nabla\cdot \widecheck{\utwo}=0, \\
    \widecheck{\utwo}(\cdot , 0)=0.
  \end{array}
  \right.
  \label{}
\end{equation}
Now we apply the Oseen kernel for estimations of $\widecheck{\utwo}$;  {for references about the 
{Oseen kernel},
 we refer the readers to e.g. \cite{Robinson2016NS, PG2023, PG2002} Particularly, by using the decaying property of the 
%
{Oseen kernel}
 (see Proposition 11.1 in \cite{PG2002}),  then with a computation similar  to \eqref{eq:du}, we can arrive at
\begin{equation}\label{eq:os}
    |\widecheck{\utwo}(x, t)|\le C\int_0^t\int_{\mathbb{R}^3}\frac{|{\mathsf{X}}(y,s)|}{\delta_p^4((x,t), (y,s))}dyds\le C \mathcal{I}_1(|\mathsf{X}|)(x, t).
\end{equation}} 
%
Note that 
\begin{align*}
  \|\mathsf{X}\|_{M^{\frac{3}{2}, 3(1-\alpha)}}\le C(1+\ve_0). 
\end{align*}
This yields that 
\begin{equation}
\|\widecheck{\utwo}\|_{M^{\frac{3(1-\alpha)}{1-2\alpha}, 3(1-\alpha)}(\R^4)}\le C\|\mathsf{X}\|_{M^{\frac{3}{2}, 3(1-\alpha)}(\R^4)}\le C(1+\ve_0).\label{eqn:vcheckest}
\end{equation}
Meanwhile, we have $({\utwo}-\widecheck{\utwo})$ solves the linear homogeneous Stokes equation in $\Q_{\frac{r_0}{2}}(z_0)$ and 
\begin{equation}
    \|{\utwo}-\widecheck{\utwo}\|_{L^p(\Q_{\frac{r_0}{4}}(z_0))}\le C(p, r_0, \ve_0).\label{eqn:vminusvcheck}
\end{equation}
Bootstrapping $\alpha$ in the previous estimates \eqref{eqn:dcheckest}, \eqref{eqn:dminusdcheck}, \eqref{eqn:vcheckest} and \eqref{eqn:vminusvcheck} to get $\alpha\uparrow \frac{1}{2}$, and then by the embedding theorem for the Morrey spaces, we obtain that 
$$\|({\utwo}, \nabla {\dd})\|_{L^p(\Q_{\frac{r_0}{4}}(z_0))}\le C(p, r_0, \ve_0).$$
The estimate for ${\ptwo}$ comes directly from the standard $L^p$ estimate for the Poisson equation:
$$-\Delta {\ptwo}=\div^2[{\uu}\otimes {\uu}+(\nabla {\dd}\odot \nabla {\dd}-\frac{1}{2}|\nabla {\dd}|^2{\rm I}_3)-(F({\dd})-(F({\dd}))_{z_0, r_0})].$$
\end{proof}

\subsection{The A-B-C-D Lemma and Criteria for the regular points} 

Now we are ready to verify the first criteria for the regular points:
\begin{proposition}\label{prop:firstReg}
  If $z_0$ satisfies the assumption in Lemma \ref{lemma:smallenergyimprv}, then $z_0$ is a regular point.  
\end{proposition}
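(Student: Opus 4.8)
The plan is to upgrade the conclusion of Lemma \ref{lemma:MorreyEst} into genuine continuity via a bootstrapping argument. By Lemma \ref{lemma:MorreyEst}, under the smallness hypothesis $\Theta_{z_0}(r_0)\le\varepsilon_0^3$ and $|\uone|,|\dd|\le M$ on $\Q_{r_0}(z_0)$, we have $(\utwo,\ptwo,\nabla\dd)\in L^p(\Q_{r_0/4}(z_0))$ for every $p<\infty$. The goal is to show $(\utwo,\nabla\dd)$ is in fact essentially bounded near $z_0$, which is exactly the definition of a regular point.

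First I would treat the director equation. Rewrite \eqref{eqn:SELu2}$_3$ as a linear heat equation $\pa_t\dd-\Delta\dd=-({\uone}+{\utwo})\cdot\nabla\dd-f(\dd)=:\mathsf{G}$. Since $\uone$ is bounded, $\utwo\in L^p$ for all $p$, $\nabla\dd\in L^p$ for all $p$, and $f(\dd)$ is bounded (because $|\dd|\le 1$ by Lemma \ref{lemma:maximum}, so $f(\dd)=4(|\dd|^2-1)\dd$ is bounded, hence in every $L^p$), H\"older's inequality gives $\mathsf{G}\in L^q(\Q_{r_0/8}(z_0))$ for every $q<\infty$. Parabolic $L^q$ regularity (applied after multiplying by a cutoff, as in the proof of Lemma \ref{lemma:MorreyEst}) then yields $\dd\in W^{2,1}_q$ locally, and choosing $q>5$ and using the parabolic Sobolev embedding $W^{2,1}_q\hookrightarrow C^{\alpha}$ for the parabolic metric $\delta_p$ gives that $\nabla\dd$ is H\"older continuous, hence bounded, in a smaller cylinder.

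Next I would treat the velocity. Using \eqref{eqn:RHSu}, the $\utwo$-equation is a forced Stokes system
\begin{equation*}
\pa_t\utwo-\Delta\utwo+\nabla\ptwo=-({\uone}+{\utwo})\cdot\nabla({\uone}+{\utwo})-\nabla\dd\cdot(\Delta\dd-f(\dd)).
\end{equation*}
The convective term $({\uone}+{\utwo})\cdot\nabla({\uone}+{\utwo})$ lies in $L^q$ for all $q<\infty$: $\uone$ is smooth (indeed bounded with bounded derivatives, recalling $\uone\in L^\infty_{\mathrm{loc}}$ and the higher regularity from \eqref{eqn:zestima}), while $\utwo\in L^p$ and $\nabla\utwo\in L^2$, and one can bootstrap once more using $\nabla\utwo\in L^p$ which follows from the Morrey estimate argument in Lemma \ref{lemma:MorreyEst}. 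The term $\nabla\dd\cdot(\Delta\dd-f(\dd))$ is controlled once $\dd$ enjoys the $W^{2,1}_q$ regularity just obtained, so the whole right-hand side is a divergence of an $L^q$ tensor (absorbing the Ericksen stress as $-\div(\nabla\dd\odot\nabla\dd-\cdots)$). Local maximal $L^q$ regularity for the Stokes system then gives $\utwo\in W^{2,1}_q$ locally for large $q$, hence $\utwo\in C^\alpha$ and in particular bounded. Combining, $(\utwo,\nabla\dd)$ is essentially bounded in a neighborhood of $z_0$, so $z_0$ is regular.

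The main obstacle is the circular coupling between the two equations: the bound on $\nabla\dd$ needs $\utwo\in L^p$, while the bound on $\utwo$ needs control of $\nabla\dd$ and $\Delta\dd$. This is resolved because Lemma \ref{lemma:MorreyEst} already furnishes $(\utwo,\ptwo,\nabla\dd)\in L^p$ for all finite $p$ simultaneously as the starting point, breaking the circularity; after that the bootstrap proceeds one equation at a time. A secondary technical point is that all the parabolic regularity estimates must be localized by cutoff functions (generating lower-order commutator terms supported in the full cylinder, all of which are already in the relevant $L^q$ spaces), exactly as carried out in the proof of Lemma \ref{lemma:MorreyEst}; I would not reproduce those routine computations in detail.
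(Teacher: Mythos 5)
Your overall strategy matches the paper's: feed the $L^p$ bounds from Lemma \ref{lemma:MorreyEst} into linear parabolic/Stokes regularity theory to obtain H\"older continuity of $(\utwo,\nabla\dd)$. The director step is correct (the RHS $-({\uone}+{\utwo})\cdot\nabla\dd-f(\dd)$ really is in every $L^q$ since $\uone\in L^\infty$, $\utwo,\nabla\dd\in L^p$ for all $p$, and $|\dd|\le 1$), and the paper does exactly this. The paper treats the velocity first via the generalized Stokes regularity of Jia--\v{S}ver\'{a}k, then the director; you reverse the order, which would also be fine if the velocity step were carried out correctly.

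The velocity step, however, has a genuine gap. You assert that $({\uone}+{\utwo})\cdot\nabla({\uone}+{\utwo})\in L^q$ for all $q$, and justify this by (a) claiming $\uone$ has bounded derivatives and (b) claiming $\nabla\utwo\in L^p$ for all $p$ ``follows from the Morrey estimate argument.'' Neither is available: under the hypotheses of Theorem \ref{thm:main2}, $\uone(\omega)\in D(A^{3/4+\beta-\varepsilon})\subset H^{3/2+}$ gives boundedness of $\uone$ but not of $\nabla\uone$; and Lemma \ref{lemma:MorreyEst} only yields $(\utwo,\ptwo,\nabla\dd)\in L^p$, not $\nabla\utwo\in L^p$ (the Morrey argument controls $\nabla\utwo$ only in $M^{2,4-2\alpha}$). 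Consequently the product $({\uone}+{\utwo})\cdot\nabla\utwo$ is only known to lie in $L^{2-}$, not in all $L^q$. The fix, which is what the paper's citation to Jia--\v{S}ver\'{a}k accomplishes, is to put the convective term in divergence form, $({\uone}+{\utwo})\cdot\nabla({\uone}+{\utwo})=\div\left[({\uone}+{\utwo})\otimes({\uone}+{\utwo})\right]$, which is $\div$ of an $L^q$ tensor for every $q$, no gradient control needed. A related overclaim is the conclusion $\utwo\in W^{2,1}_q$: a forcing of the form $\div F$ with $F\in L^q$ only yields $\nabla\utwo\in L^q$ locally, not full $W^{2,1}_q$ regularity, but $\nabla\utwo\in L^q$ with $q>5$ already gives $\utwo\in C^\alpha$, which is all that is needed and is precisely what the paper records.
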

\begin{proof}
  By Lemma \ref{lemma:MorreyEst}, we have that $({\uu}, {\ptwo}, \nabla {\dd})\in L^p(\Q_{r_0/4}(z_0))$ for $1<p<\infty$. Applying the regularity estimate for the generalized Stokes system (cf. \cite[Lemma 2.2]{jia2014local}) to the $\utwo$ equation in \eqref{eqn:SELu2}$_1$, we get that $\utwo\in C^\alpha(\Q_{\frac{r_0}{8}}(z_0))$ for some $\alpha \in (0, 1)$. On the other hand, the ${\dd}$ equation reads
  \begin{equation*}
  	\pa_t {\dd}-\Delta {\dd}=-({\uone}+{\utwo})\cdot \nabla {\dd}-f({\dd})\in L^p(\Q_{\frac{r_0}{4}}(z_0)).
  \end{equation*} 
  By the standard $W^{2, p}$ estimate we can get that $\|(\pa_t {\dd}, \nabla^2 {\dd})\|_{L^p(\Q_{\frac{r_0}{8}}(z_0))}<C(p, r_0, \ve_0)$. And the Sobolev embedding implies $\nabla {\dd}\in C^\alpha(\Q_{\frac{r_0}{8}}(z_0)),$ thus $z_0$ is a regular point. 
\end{proof}
In fact we can improve Proposition \ref{prop:firstReg} into the following theorem:
\begin{theorem}
	There exists $\varepsilon_1>0$ such that if $({\uu}, P, {\dd})$ is a suitable weak solution to \eqref{eqn:SEL0}, which satisfies, for $z_0\in \T^3\times(0, \infty)$, 
	\begin{equation}
		\limsup_{r\to 0}\frac{1}{r}\int_{\Q_r(z_0)}(|\nabla {\utwo}|^2+|\nabla^2 {\dd}|^2)\le \ve_1^2,
		\label{}
	\end{equation}
	then $({\uu}, \nabla {\dd})$ is bounded near $z_0$. 
	\label{thm:mainReq}
\end{theorem}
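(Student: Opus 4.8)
The plan is to reduce the statement to Proposition~\ref{prop:firstReg}: it suffices to find $\rho>0$ with $|{\uone}|\le M$, $|{\dd}|\le M$ on $\Q_\rho(z_0)$ and $\Theta_{z_0}(\rho)\le\varepsilon_0^3$. These boundedness requirements hold automatically near $z_0$, since ${\uone}$ is locally bounded (Lemma~\ref{lemma:boundz}) and $|{\dd}|\le 1$ a.e.\ by the maximum principle (Lemma~\ref{lemma:maximum}, applied to \eqref{eqn:SELu2}$_3$ with velocity ${\uone}+{\utwo}\in L^2(0,T;V)$ and $|{\dd}_0|=1$); in particular $F({\dd})$ and $|f({\dd})|$ are bounded and $|\nabla f({\dd})|\lesssim|\nabla {\dd}|$ near $z_0$. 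So everything reduces to upgrading the scale-invariant smallness of the dissipation to that of $\Theta_{z_0}$.

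For this I would carry out a Caffarelli--Kohn--Nirenberg-type $A$-$B$-$C$-$D$ iteration. Put, for small $r>0$,
\[
A(r)=\frac1r\sup_{t_0-r^2\le t\le t_0}\int_{B_r(x_0)}\bke{|{\utwo}|^2+|\nabla{\dd}|^2},\qquad
E(r)=\frac1r\int_{\Q_r(z_0)}\bke{|\nabla{\utwo}|^2+|\nabla^2{\dd}|^2},
\]
\[
C(r)=\frac1{r^2}\int_{\Q_r(z_0)}\bke{|{\utwo}|^3+|\nabla{\dd}|^3},\qquad
D(r)=\frac1{r^2}\int_{\Q_r(z_0)}|{\ptwo}|^{3/2},
\]
so that $\Theta_{z_0}(r)=C(r)+D(r)^2$ and the hypothesis reads $\limsup_{r\to0}E(r)\le\varepsilon_1^2$. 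The three dimensionless estimates I would establish are: (i) the interpolation bound $C(r)\lesssim A(r)^{3/4}E(r)^{3/4}+A(r)^{3/2}$, from the Ladyzhenskaya/Gagliardo--Nirenberg inequality applied to ${\utwo}-({\utwo})_{B_r}$ and to $\nabla{\dd}$ together with Hölder in time; (ii) the pressure decay $D(\theta r)\lesssim\theta D(r)+\theta^{-2}\bke{C(r)+r^3}$ for $0<\theta<\tfrac14$, obtained by splitting ${\ptwo}$ on $\Q_r(z_0)$ into the Newtonian potential of the localised right-hand side of $-\Delta{\ptwo}=\div^2[({\uone}+{\utwo})\otimes({\uone}+{\utwo})+\nabla{\dd}\odot\nabla{\dd}-\tfrac12|\nabla{\dd}|^2{\rm I}_3-F({\dd}){\rm I}_3]$, controlled by Calder\'on--Zygmund, plus a part harmonic in $x$ whose scaled $L^{3/2}$-norm contracts by $\theta$ via the mean value property; and (iii) the local energy bound $A(\rho)+E(\rho)\lesssim C(2\rho)^{2/3}+C(2\rho)+C(2\rho)^{1/3}D(2\rho)^{2/3}+M\,A(2\rho)^{1/2}E(2\rho)^{1/2}+(\text{terms}\lesssim\rho^3)$, obtained by inserting a cutoff $\varphi^2$ into the local energy inequality of Definition~\ref{def:suitweak}(5) and absorbing the drift and potential terms through $|{\uone}|\le M$ and $|{\dd}|\le 1$; in the term $\int{\ptwo}{\utwo}\cdot\nabla\varphi$ one first subtracts from ${\ptwo}$ its mean over the support of $\varphi$, which is legitimate because $\div{\utwo}=0$.

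Feeding $E(r)\le2\varepsilon_1^2=:\eta$ into (i)--(iii) produces, after bookkeeping of the exponents, a coupled system for $A,C,D$ which --- once $\theta$ is fixed small, then $\varepsilon_1$ (hence $\eta$) small depending on $\theta$, and one restricts to scales at which $A+D$ lies below a threshold $\delta_0$ chosen small in terms of $\theta$ but large compared to a fixed power of $\eta$ --- collapses into a genuine contraction $A(\theta r)+D(\theta r)\le\tfrac12\bke{A(r)+D(r)}+Cr^3$. Iterating it downward gives $A(\theta^k\rho)+D(\theta^k\rho)\to0$, whence by (i) and (ii) also $C(\theta^k\rho)\to0$, so $\Theta_{z_0}(\theta^k\rho)\to0$; taking $k$ large yields $\Theta_{z_0}(\rho')\le\varepsilon_0^3$ for some small $\rho'$, and Proposition~\ref{prop:firstReg} finishes the proof.

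The step I expect to be the main obstacle is \emph{seeding} the iteration, i.e.\ exhibiting one scale at which $A+D\le\delta_0$: the hypothesis only supplies smallness of $E$, while the crude global bound $A(r)\le r^{-1}\|{\utwo}\|_{L^\infty(0,T;H)}^2$ blows up as $r\to0$, so smallness of $A$ (and of $D$) at some scale is not free. I would obtain the seed by a compactness/blow-up argument in the spirit of the proof of Lemma~\ref{lemma:smallenergyimprv}: if it failed, a sequence of suitable weak solutions with $E(r_i)\le\eta$ along $r_i\to0$ but $A(r_i)+D(r_i)$ bounded below, rescaled at $(z_0,r_i)$ and passed to a weak limit via the local energy inequality, would yield a limit object with $\nabla{\utwo}$ and $\nabla^2{\dd}$ small in $L^2(\Q_1)$ (and vanishing as $\varepsilon_1\to0$), hence with ${\utwo}$ and $\nabla{\dd}$ essentially independent of $x$; inserting this into the limiting equations and energy inequality would force the limit to be trivial, contradicting the lower bound at the unit scale. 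Carrying out this extraction without an a priori bound on $A$, while simultaneously controlling the ${\utwo}$--${\dd}$ coupling and the drift ${\uone}$ throughout (i)--(iii), is the genuinely delicate point.
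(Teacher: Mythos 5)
Your overall plan---reduce to Proposition~\ref{prop:firstReg}, obtain boundedness of $\uone$ and $\dd$ from Lemma~\ref{lemma:boundz} and Lemma~\ref{lemma:maximum}, and run a Caffarelli--Kohn--Nirenberg iteration on the quantities $A$, $B$ (your $E$), $C$, $D$---coincides with the paper's, and your pressure estimate and your use of the local energy inequality match Lemma~\ref{lemma:DAB} and the energy step in the paper's proof. The genuine gap is your interpolation estimate~(i), $C(r)\lesssim A(r)^{3/4}E(r)^{3/4}+A(r)^{3/2}$, which lacks the crucial scale-separation factor. The paper's Lemma~\ref{lemma:CAB} instead gives, for $0<r\le\rho$,
\[
C(r)\ \lesssim\ \left(\tfrac{r}{\rho}\right)^{3}A^{3/2}(\rho)+\left(\tfrac{\rho}{r}\right)^{3}A^{3/4}(\rho)\,B^{3/4}(\rho),
\]
where the decay $(r/\rho)^3$ in front of $A^{3/2}$ comes from first applying Poincar\'e on $B_\rho$ to split $\int_{B_r}(|\utwo|^2+|\nabla\dd|^2)$ into an oscillation part (controlled by the dissipation $B$) and a mean part (carrying the volume factor $(r/\rho)^3$), and only then invoking Lemma~\ref{lemma:interpolation}. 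Without this gain, the $A^{3/2}$ contribution feeds back at scale $\theta r$ with a factor of order $\theta^{-3/2}$, which is exactly why your iteration closes only in a small-$(A+D)$ regime and forces you to manufacture a ``seed'' scale.

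That seed step is both unnecessary and problematic. Unnecessary: with the $\theta^3$-gain of Lemma~\ref{lemma:CAB} (and the analogous geometric factors in Lemma~\ref{lemma:DAB}), the iteration in the paper becomes affine,
\[
A^{3/2}\!\left(\tfrac{1}{2}\theta_0\rho\right)+D^2\!\left(\tfrac{1}{2}\theta_0\rho\right)\ \le\ c\bke{\theta_0^{2}+\theta_0^{-8}B^{3/2}(\rho)}\bke{A^{3/2}(\rho)+D^2(\rho)}+\text{(small)},
\]
whose contraction factor is made $\le\tfrac12$ by first fixing $\theta_0$ small and then taking $\rho$ small so that $B(\rho)\lesssim\ve_1^{2}$; no smallness of $A$ or $D$ is required, since the geometric $1/2^k$ kills any finite starting value, and $A(\rho),D(\rho)<\infty$ holds for every $\rho$ because $\utwo\in L^\infty(0,T;H)$, $\dd\in L^\infty(0,T;H^1)$, $\ptwo\in L^{5/3}(Q_T)$. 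Problematic: in your compactness sketch, smallness of the rescaled $\|\nabla\utwo\|_{L^2(\Q_1)}$ and $\|\nabla^2\dd\|_{L^2(\Q_1)}$ only forces the limits of $\utwo$ and $\nabla\dd$ to be nearly constant in $x$; those constants need not be small (and the normalization of $A$ by $r^{-1}$ is singular), so the limiting object need not be trivial and the contradiction you want does not obviously materialise. The repair is to replace your estimate~(i) by the Poincar\'e-refined Lemma~\ref{lemma:CAB}; then the seed is not needed and the iteration closes as in the paper.
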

For simplicity, we assume that $z_0=0$. We generalize the dimensionless estimates in \cite{Caffarelli, lin1995partial} with the following quantities:
\begin{align*}
	A(r)&:=\sup_{-r^2\le t\le 0}r^{-1}\int_{B_r(0)\times\{t\}}(|{\utwo}|^2+|\nabla {\dd}|^2), \\
	B(r)&:=r^{-1}\int_{\Q_r(0)}\left( |\nabla {\utwo}|^2+|\nabla^2 {\dd}|^2 \right),\\
	C(r)&:=r^{-2}\int_{Q_r(0)}(|{\utwo}|^3+|\nabla {\dd}|^3),\\ 
	D(r)&:=r^{-2}\int_{\Q_r(0)}|{\ptwo}|^{\frac{3}{2}},
\end{align*}
To prove Theorem \ref{thm:mainReq},
we will need some preparations.   The first step is to note that we have the following interpolation lemma. 
\begin{lemma}	\label{lemma:interpolation}
	For $v\in H^1(\T^3)$, we have 
	\begin{equation}
		\begin{aligned}
			&\int_{B_r(0)}|v|^q(x, t){\rd x}\lesssim \left( \int_{B_r(0)}|\nabla v|^2(x, t){\rd x} \right)^{\frac{q}{2}-a}\left( \int_{B_r(0)}|v|^2(x, t) \right)^a\\
			&+r^{3(1-\frac{q}{2})}\left( \int_{B_r(0)}|v|^2(x, t){\rd x} \right)^{\frac{q}{2}}, 
		\end{aligned}
		\label{}
	\end{equation}
	for every $B_r(0)\subset\T^3$, $2\le q\le 6$, $a=\frac{3}{2}(1-\frac{q}{6}).$ 
\end{lemma}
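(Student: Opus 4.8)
The plan is to reduce the general statement to the single scale $r=1$ by an elementary rescaling, since the inequality is precisely the dimensionally correct form of a Gagliardo--Nirenberg inequality and the explicit powers of $r$ are forced by scaling. First I would treat $r=1$ directly: fixing $t$ and abbreviating $v=v(\cdot,t)$, the three-dimensional Sobolev embedding $W^{1,2}(B_1(0))\hookrightarrow L^6(B_1(0))$ gives $\|v\|_{L^6(B_1(0))}\le C\bigl(\|\nabla v\|_{L^2(B_1(0))}+\|v\|_{L^2(B_1(0))}\bigr)$, and for $2\le q\le 6$ log-convexity of $L^p$ norms gives $\|v\|_{L^q(B_1(0))}\le\|v\|_{L^2(B_1(0))}^{1-\theta}\,\|v\|_{L^6(B_1(0))}^{\theta}$ with $\theta=\tfrac32-\tfrac3q\in[0,1]$. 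Combining these, raising to the power $q$, and using $(a+b)^{q\theta}\le C(a^{q\theta}+b^{q\theta})$ for $a,b\ge0$ (valid since $q\theta\ge0$, i.e. since $q\ge 2$) yields
\begin{equation*}
\int_{B_1(0)}|v|^q\le C\,\|\nabla v\|_{L^2(B_1(0))}^{q\theta}\,\|v\|_{L^2(B_1(0))}^{q(1-\theta)}+C\,\|v\|_{L^2(B_1(0))}^{q},
\end{equation*}
and a one-line computation gives $q\theta=\tfrac{3q}{2}-3=2\bigl(\tfrac q2-a\bigr)$ and $q(1-\theta)=3-\tfrac q2=2a$ with $a=\tfrac32\bigl(1-\tfrac q6\bigr)$, the exponents $\tfrac q2-a$ and $a$ both being nonnegative exactly because $2\le q\le 6$. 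This is the claimed inequality at $r=1$, where the prefactor $r^{3(1-q/2)}$ equals $1$.

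For a general ball $B_r(0)\subset\mathbb T^3$ I would set $w(y):=v(ry)$ for $y\in B_1(0)$, so that $w\in H^1(B_1(0))$ and
\begin{equation*}
\int_{B_1(0)}|w|^q=r^{-3}\int_{B_r(0)}|v|^q,\qquad \int_{B_1(0)}|\nabla w|^2=r^{-1}\int_{B_r(0)}|\nabla v|^2,\qquad \int_{B_1(0)}|w|^2=r^{-3}\int_{B_r(0)}|v|^2.
\end{equation*}
Applying the $r=1$ estimate to $w$, substituting, and multiplying through by $r^{3}$, the power of $r$ in front of the interpolation term is $3-\bigl(\tfrac q2-a\bigr)-3a=3-\tfrac q2-2a=0$ (using $2a=3-\tfrac q2$), while the power in front of the $L^2$ term is $3-\tfrac{3q}{2}=3\bigl(1-\tfrac q2\bigr)$. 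This is exactly the asserted inequality. The torus structure is irrelevant to the argument: it only guarantees that $B_r(0)$ is a genuine Euclidean ball on which the Sobolev and interpolation inequalities apply, so the lemma could equally be stated for balls in $\mathbb R^3$.

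There is no serious obstacle here; the only thing requiring care is the exponent bookkeeping in the rescaling, which is arranged so that the interpolation term comes out with $r^{0}$ and the low-order term with exactly $r^{3(1-q/2)}$. That cancellation is really the content of the lemma, since it is precisely what makes the inequality a scale-invariant building block in the subsequent $A$--$B$--$C$--$D$ estimates; the range $2\le q\le 6$ is used in two places — to ensure $\theta\in[0,1]$ (hence the Sobolev/interpolation chain is valid) and to ensure $q\theta\ge0$ and $a\ge0$ so that the split $(a+b)^{q\theta}\lesssim a^{q\theta}+b^{q\theta}$ and the final form are legitimate.
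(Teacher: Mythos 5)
Your proof is correct. The paper states this lemma without proof, treating it as a standard Gagliardo--Nirenberg interpolation inequality inherited from the Caffarelli--Kohn--Nirenberg and Lin--Liu literature (\cite{Caffarelli, lin1995partial}), and your argument is exactly the standard derivation: Sobolev embedding $W^{1,2}(B_1)\hookrightarrow L^6(B_1)$ in dimension three combined with log-convexity of $L^p$-norms at unit scale, followed by the parabolic-free rescaling $w(y)=v(ry)$. The exponent bookkeeping is right: with $\theta=\frac{3}{2}-\frac{3}{q}$ one has $\frac{q\theta}{2}=\frac{q}{2}-a$ and $\frac{q(1-\theta)}{2}=a$ where $a=\frac{3}{2}\left(1-\frac{q}{6}\right)$, the elementary inequality $(a+b)^{q\theta}\lesssim a^{q\theta}+b^{q\theta}$ is valid because $q\theta\ge 0$ on the stated range, and in the rescaling the $r$-powers collapse to $r^0$ on the interpolation term (since $3-\left(\frac{q}{2}-a\right)-3a=0$) and to $r^{3(1-q/2)}$ on the low-order term, as required. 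Your closing remark about the torus is also well taken: one only needs $B_r(0)$ to be a genuine Euclidean ball, which holds for the small radii used in the blow-up and iteration arguments.
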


With application of   Lemma \ref{lemma:interpolation} we can establish the following result.
\begin{lemma}	\label{lemma:CAB}
	For any ${\uu}\in L^\infty([-\rho^2, 0]; L^2(B_\rho(0)))\cap L^2([-\rho^2, 0]; H^1(B_\rho(0)))$, ${\dd}\in L^\infty([-\rho^2, 0]; H^1(B_\rho(0)))\cap L^2([-\rho^2, 0]; H^2(B_\rho(0)))$, it holds that for any $0<r\le \rho$,
	\begin{equation}
		C(r)\lesssim \left( \frac{r}{\rho} \right)^3 A^{\frac{3}{2}}(\rho)+\left( \frac{\rho}{r} \right)^3 A^{\frac{3}{4}}(\rho) B^{\frac{3}{4}}(\rho). 
		\label{eqn:CAB}
	\end{equation}
\end{lemma}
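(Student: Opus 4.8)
The plan is to follow the classical Caffarelli–Kohn–Nirenberg strategy for estimating the scale-invariant quantity $C(r)$ by the "energy" quantities $A(\rho)$ and $B(\rho)$ on a larger ball. The starting point is Lemma \ref{lemma:interpolation} applied with $q=3$ (so $a=\tfrac34$) to both $v=\utwo$ and $v=\nabla\dd$, at each fixed time slice $t\in[-\rho^2,0]$. This yields, for $B_r(0)\subset B_\rho(0)$,
\begin{equation*}
\int_{B_r(0)}(|\utwo|^3+|\nabla\dd|^3)(x,t)\,\rd x
\lesssim \Big(\int_{B_r}(|\nabla\utwo|^2+|\nabla^2\dd|^2)\Big)^{\!\frac34}\Big(\int_{B_r}(|\utwo|^2+|\nabla\dd|^2)\Big)^{\!\frac34}
+ r^{-\frac32}\Big(\int_{B_r}(|\utwo|^2+|\nabla\dd|^2)\Big)^{\!\frac32}.
\end{equation*}
Here I must be slightly careful about which ball the gradient integral lives on; the standard fix is to note that a Gagliardo–Nirenberg/Sobolev inequality on $B_r$ produces a gradient term on $B_r$ plus a correction, and on the small ball $B_r$ one bounds $\int_{B_r}|\nabla\utwo|^2 \le \int_{B_\rho}|\nabla\utwo|^2$ only after the time integration; at the pointwise-in-$t$ level one keeps the $B_r$ integral.

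The next step is to integrate in $t$ over $[-r^2,0]$. For the first term on the right I would use Hölder in time with exponents $(\tfrac43,4)$: pulling the $\sup_t$ out of the second factor (which is controlled by $r\cdot A(\rho)$ after using $B_r\subset B_\rho$) gives
\begin{equation*}
\int_{-r^2}^0\Big(\int_{B_r}(|\nabla\utwo|^2+|\nabla^2\dd|^2)\Big)^{\!\frac34}(rA(\rho))^{\frac34}\,\rd t
\le (rA(\rho))^{\frac34}\,(r^2)^{\frac14}\Big(\int_{\Q_r}(|\nabla\utwo|^2+|\nabla^2\dd|^2)\Big)^{\!\frac34}.
\end{equation*}
Since $\int_{\Q_r}(|\nabla\utwo|^2+|\nabla^2\dd|^2)\le \rho\,B(\rho)$, this contributes $\lesssim r^{\frac34}\,r^{\frac12}\,A(\rho)^{\frac34}\,\rho^{\frac34}B(\rho)^{\frac34}$ to $\int_{\Q_r}(|\utwo|^3+|\nabla\dd|^3)$. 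Dividing by $r^2$ as in the definition of $C(r)$ gives a term $\lesssim r^{-\frac34}\rho^{\frac34}A(\rho)^{\frac34}B(\rho)^{\frac34}$, and on reconciling exponents (using $r\le\rho$ to absorb any mismatch into the stated powers) one reaches the $(\rho/r)^3 A^{3/4}B^{3/4}$ term — the precise bookkeeping of the power of $\rho/r$ is where I expect to spend the most care. For the second (lower-order) term, integrating $r^{-3/2}(rA(\rho))^{3/2}$ over $[-r^2,0]$ gives $r^{-3/2}\cdot r^{3/2}A(\rho)^{3/2}\cdot r^2 = r^2 A(\rho)^{3/2}$, so after dividing by $r^2$ it contributes $\lesssim A(\rho)^{3/2}$; rewriting this as $(r/\rho)^3 A^{3/2}(\rho)$ times $(\rho/r)^3$ is not what we want, so instead I would argue more carefully: the lower-order term should really be estimated against $A(\rho)$ on the ball $B_\rho$ only for $|t|\le r^2$, and a standard rescaling shows it is dominated by $(r/\rho)^3 A^{3/2}(\rho)$, matching the first term in \eqref{eqn:CAB}.

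The main obstacle is the correct tracking of scaling exponents so that both the leading term $(\rho/r)^3 A^{3/4}B^{3/4}$ and the lower-order term $(r/\rho)^3 A^{3/2}$ come out with exactly the stated powers; this is routine but error-prone, and the cleanest route is to first prove the inequality in the normalized case $\rho=1$ (where it reduces to a Hölder-in-time plus Gagliardo–Nirenberg computation with the correction term $r^{-3/2}$ handled by the hypothesis $r\le 1$) and then restore general $\rho$ by the parabolic rescaling $(x,t)\mapsto(\rho x,\rho^2 t)$, under which $A,B,C$ are scale-invariant. No probabilistic input is needed here: this is a purely deterministic estimate valid for any functions in the stated energy classes, to be applied later on each slice $\Q_\rho(z_0)$ of a suitable weak solution.
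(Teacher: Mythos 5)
Your proposal correctly identifies the key tool (Lemma~\ref{lemma:interpolation} with $q=3$, $a=\tfrac34$) and handles the leading term in essentially the same way the paper does: Hölder in time with exponents $(\tfrac43,4)$, then $\int_{\Q_r}(|\nabla\utwo|^2+|\nabla^2\dd|^2)\le\rho\,B(\rho)$, giving a contribution $\lesssim(\rho/r)^{3/2}A^{3/4}B^{3/4}\le(\rho/r)^3A^{3/4}B^{3/4}$, which matches the stated bound. (One small slip: $\int_{B_r}(|\utwo|^2+|\nabla\dd|^2)\le\int_{B_\rho}\le\rho A(\rho)$, not $rA(\rho)$; the quantity $A$ is not monotone in $r$, so you cannot slide the radius. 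With $\rho A(\rho)$ the exponent bookkeeping still closes for the leading term.)

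However, there is a genuine gap in your treatment of the lower-order term $r^{-3/2}\bigl(\int_{B_r}(|\utwo|^2+|\nabla\dd|^2)\bigr)^{3/2}$. The crude bound $\int_{B_r}\le\rho A(\rho)$ yields, after the time integration and division by $r^2$, a contribution of order $(\rho/r)^{3/2}A^{3/2}(\rho)$ — which blows up as $r\to0$, the opposite of the required $(r/\rho)^3A^{3/2}(\rho)$. Your proposed remedy, ``a standard rescaling shows it is dominated by $(r/\rho)^3A^{3/2}(\rho)$,'' cannot work: $A$, $B$, $C$ are already parabolic scale invariants, so normalizing to $\rho=1$ and rescaling back is a tautology and will not change the exponent of $r/\rho$. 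What is needed is an argument that genuinely exploits $r\ll\rho$ to extract decay in the small-ball $L^2$ integral. This is exactly the mean-subtraction / Poincar\'e step the paper performs \emph{before} applying the interpolation lemma: writing $|\utwo|^2 = \bigl(|\utwo|^2-(|\utwo|^2)_\rho\bigr) + (|\utwo|^2)_\rho$ and similarly for $|\nabla\dd|^2$, one gets
\begin{equation*}
\int_{B_r}(|\utwo|^2+|\nabla\dd|^2)\lesssim \rho\int_{B_\rho}\bigl(|\utwo||\nabla\utwo|+|\nabla\dd||\nabla^2\dd|\bigr)+\Bigl(\frac{r}{\rho}\Bigr)^3\int_{B_\rho}(|\utwo|^2+|\nabla\dd|^2),
\end{equation*}
whose second term carries the $(r/\rho)^3$ factor directly and whose first term, after Cauchy--Schwarz and the time integration, feeds into the $A^{3/4}B^{3/4}$ contribution. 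Without this decomposition the lower-order term cannot be controlled, so the proof as written does not close.
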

\begin{proof}
	By Poincar\'{e}'s inequality, we can show that for $0<r\le \rho$, 
	\begin{align*}
		&\int_{B_r(0)}(|{\utwo}|^2+|\nabla {\dd}|^2)\\
	\lesssim	& \int_{B_r(0)}\left( \left||{\utwo}|^2-(|{\utwo}|^2)\rho\right|+\left||\nabla {\dd}|^2-(|\nabla {\dd}|^2)_\rho\right| \right){\rd x}+\left( \frac{r}{\rho} \right)^3\int_{B_\rho(0)}(|{\utwo}|^2+|\nabla {\dd}|^2){\rd x}\\
	\lesssim	& \rho\int_{B_\rho(0)}(|{\utwo}||\nabla {\utwo}|+|\nabla {\dd}||\nabla^2 {\dd}|){\rd x}+\left( \frac{r}{\rho} \right)^3\int_{B_\rho(0)}(|{\utwo}|^2+|\nabla {\dd}|^2){\rd x}\\
	\lesssim 	&\rho^{\frac{3}{2}}\left( \rho^{-1}\int_{B_\rho(0)}|{\utwo}|^2+|\nabla {\dd}|^2 {\rd x} \right)^{\frac{1}{2}}\left( \int_{B_\rho(0)}|\nabla {\utwo}|^2+|\nabla^2 {\dd}|^2 \right)^{\frac{1}{2}}\\
&+\left( \frac{r}{\rho} \right)^3\int_{B_\rho(0)}\left( |{\utwo}|^2+|\nabla {\dd}|^2 \right){\rd x}\\
\lesssim		& \rho^{\frac{3}{2}}A^{\frac{1}{2}}(\rho)\left( \int_{B_\rho(0)}|\nabla {\utwo}|^2+|\nabla^2 {\dd}|^2{\rd x} \right)^{\frac{1}{2}}+\left( \frac{r}{\rho} \right)^3 A(\rho). 
	\end{align*}
	Applying Lemma \ref{lemma:interpolation} with $q=3$, $a=\frac{3}{4}$ we can get 
	\begin{align*}
		&\int_{B_r(0)}(|{\utwo}|^3+|\nabla {\dd}|^3){\rd x}\\
	\lesssim 	&\rho^{\frac{4}{3}}\left( \int_{B_r(0)}(|\nabla {\utwo}|^2+|\nabla^2 {\dd}|^2){\rd x} \right)^{\frac{3}{4}}\left( \rho^{-1}\int_{B_r(0)}(|{\utwo}|^2+|\nabla {\dd}|^2){\rd x} \right)^{\frac{3}{4}}\\
&+r^{-\frac{3}{2}}\left( \int_{B_r(0)}(|{\utwo}|^2+|\nabla {\dd}|^2){\rd x} \right)^{\frac{3}{2}}\\
	\lesssim	& \rho^{\frac{3}{4}}A^{\frac{3}{4}}(\rho)\left( \int_{B_r(0)}|\nabla {\utwo}|^2+|\nabla^2 {\dd}|^2  \right)^{\frac{3}{4}}+r^{-\frac{3}{2}}\left( \int_{B_r(0)}(|{\utwo}|^2+|\nabla {\dd}|^2) \right)^{\frac{3}{2}}\\
	\lesssim 	&\left( \rho^{\frac{3}{4}}+\frac{\rho^{\frac{9}{4}}}{r^{\frac{3}{2}}} \right)\left( \int_{B_r(0)}(|\nabla {\utwo}|^2+|\nabla^2 {\dd}|^2) \right)^{\frac{3}{4}}A^{\frac{3}{4}}(\rho)+\left( \frac{r}{\rho} \right)^3 A^{\frac{3}{2}}(\rho),
	\end{align*}
	where in the last we apply the previous inequality. Integrating the inequality over $[-r^2, 0]$ and by H\"{o}lder's inequality, we have 
	\begin{align*}
		C(r)&=\frac{1}{r^2}\int_{\Q_r(0)}(|{\utwo}|^3+|\nabla {\dd}|^3)\\
		&\lesssim \left( \frac{r}{\rho} \right)^{3}A^{\frac{3}{2}}(\rho)+\left( \rho^{\frac{3}{4}}+\frac{\rho^{\frac{9}{4}}}{r^{\frac{3}{2}}} \right)\int_{-r^2}^0\left( \int_{B_r(0)}(|\nabla {\utwo}|^2+|\nabla^2 {\dd}|^2) \right)^{\frac{3}{4}}A^{\frac{3}{4}}(\rho)\\
		&\lesssim \left( \frac{r}{\rho} \right)^3 A^{\frac{3}{2}}(\rho)+r^{-\frac{3}{2}}\rho^{\frac{3}{4}}\left( \rho^{\frac{3}{4}}+\frac{\rho^{\frac{9}{4}}}{r^{\frac{3}{2}}} \right)A^{\frac{3}{4}}(\rho)B^{\frac{3}{4}}(\rho)\\
		&=\left( \frac{r}{\rho} \right)^3 A^{\frac{3}{2}}(\rho)+\left[ \left( \frac{\rho}{r} \right)^{\frac{3}{2}}+\left( \frac{\rho}{r} \right)^3 \right]A^{\frac{3}{4}}(\rho)B^{\frac{3}{4}}(\rho)\\
		&\lesssim \left( \frac{r}{\rho} \right)^3 A^{\frac{3}{2}}(\rho)+\left( \frac{\rho}{r} \right)^3 A^{\frac{3}{4}}(\rho)B^{\frac{3}{4}}(\rho). 
	\end{align*}
\end{proof}
Next, we estimate the pressure function to get
\begin{lemma}	\label{lemma:DAB}
	Under the same assumption of Lemma \ref{lemma:CAB}, it holds that for any $0<r<\rho/2$, 
	\begin{equation}
		D(r)\lesssim \frac{r}{\rho}D(\rho)+\left( \frac{\rho}{r} \right)^2 \left[ A^{\frac{3}{4}}(\rho)B^{\frac{3}{4}}(\rho)+\rho^{\frac{3}{2}}B^{\frac{3}{4}}(\rho)+\rho^3\right]. 
		\label{eqn:DAB}
	\end{equation}
\end{lemma}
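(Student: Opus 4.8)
The plan is to use the Caffarelli--Kohn--Nirenberg-type splitting of the pressure into a singular-integral piece and a harmonic piece. Taking the divergence of \eqref{eqn:SELu2}$_1$ and using $\div\utwo=0$, the pressure solves, for a.e.\ $t$, the Poisson equation (equivalent, via \eqref{eqn:RHSu}, to the one already used in the paper)
\[
  -\Delta\ptwo=\pa_i\pa_j T_{ij},\qquad
  T:=(\uone+\utwo)\otimes(\uone+\utwo)+\nabla\dd\odot\nabla\dd-\tfrac12|\nabla\dd|^2{\rm I}_3-F(\dd){\rm I}_3.
\]
Fix $0<r<\rho/2$. The range $\rho/4\le r<\rho/2$ is immediate from $\Q_r(0)\subset\Q_{\rho/2}(0)$ together with $D(\rho/2)\le 4D(\rho)$, so assume $r<\rho/4$. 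Choose $\eta\in C_0^\infty(B_{3\rho/4})$ with $\eta\equiv1$ on $B_{\rho/2}$ and $|\nabla^k\eta|\lesssim\rho^{-k}$, and set $\ptwo=\ptwo_{(1)}+\ptwo_{(2)}$ on $B_{\rho/2}$, where $\ptwo_{(1)}(\cdot,t):=\pa_i\pa_j N\ast(\eta\,T_{ij}(\cdot,t))$, $N$ the Green function of $-\Delta$ on $\T^3$, so that $-\Delta\ptwo_{(2)}=0$ in $B_{\rho/2}$.

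For $\ptwo_{(1)}$ I would use the Calderón--Zygmund estimate $\|\ptwo_{(1)}(\cdot,t)\|_{L^{3/2}(\T^3)}\lesssim\|T(\cdot,t)\|_{L^{3/2}(B_{3\rho/4})}$ and then estimate $\|T\|_{L^{3/2}}$ termwise. Since $z=0$ lies in the region where the suitable weak solution is defined, \eqref{eqn:zregu} gives $|\uone|\le M$ on a fixed parabolic cylinder and Lemma~\ref{lemma:maximum} gives $|\dd|\le1$, so $|F(\dd)|\le C$; hence the $\uone\otimes\uone$ and $F(\dd)$ contributions to $\|T(\cdot,t)\|_{L^{3/2}(B_\rho)}$ are $\lesssim C(M)\rho^2$, which after raising to the power $3/2$, integrating over $(-r^2,0)$, and dividing by $r^2$ produces the $\rho^3$ term. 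The quadratic terms $|\utwo|^2$ and $|\nabla\dd|^2$ are handled by $\int_{-r^2}^0\bigl(\|\utwo(\cdot,t)\|_{L^3(B_\rho)}^3+\|\nabla\dd(\cdot,t)\|_{L^3(B_\rho)}^3\bigr)\rd t\le\rho^2C(\rho)$ followed by Lemma~\ref{lemma:CAB} (used with equal scales) to trade $C(\rho)$ for $A(\rho),B(\rho)$; equivalently one applies Lemma~\ref{lemma:interpolation} with $q=3$, $a=\tfrac34$ at each time slice together with $\int_{B_\rho}(|\utwo|^2+|\nabla\dd|^2)\le\rho A(\rho)$ and the time-Hölder bound $\int_{-r^2}^0\bigl(\int_{B_\rho}|\nabla\utwo|^2+|\nabla^2\dd|^2\bigr)^{3/4}\rd t\lesssim(\rho B(\rho))^{3/4}r^{1/2}$; this yields the $A^{3/4}(\rho)B^{3/4}(\rho)$ contribution. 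The mixed term $\uone\otimes\utwo$, estimated via $\|\uone\utwo\|_{L^{3/2}(B_\rho)}\le\|\uone\|_{L^2(B_\rho)}\|\utwo\|_{L^6(B_\rho)}$ and the Sobolev--Poincaré inequality on $B_\rho$, produces after the same Hölder-in-time step the $\rho^{3/2}B^{3/4}(\rho)$ contribution. Using Young's inequality to redistribute the remaining error products, the $\ptwo_{(1)}$ part is thus $\lesssim(\rho/r)^2\bigl[A^{3/4}(\rho)B^{3/4}(\rho)+\rho^{3/2}B^{3/4}(\rho)+\rho^3\bigr]$.

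For $\ptwo_{(2)}$, which is harmonic in $B_{\rho/2}$, the mean value property gives $\sup_{B_r}|\ptwo_{(2)}(\cdot,t)|^{3/2}\lesssim\rho^{-3}\int_{B_{\rho/4}}|\ptwo_{(2)}(\cdot,t)|^{3/2}$ for $r<\rho/4$, whence $\int_{\Q_r(0)}|\ptwo_{(2)}|^{3/2}\lesssim(r/\rho)^3\int_{-r^2}^0\!\!\int_{B_{\rho/4}}\bigl(|\ptwo|^{3/2}+|\ptwo_{(1)}|^{3/2}\bigr)$; bounding $\int_{-r^2}^0\!\!\int_{B_{\rho/4}}|\ptwo|^{3/2}\le\rho^2D(\rho)$ and dividing by $r^2$ yields the $\tfrac r\rho D(\rho)$ term, while the $\ptwo_{(1)}$ piece appearing here is absorbed into the previous step. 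Adding the two contributions gives \eqref{eqn:DAB}. The main obstacle, beyond the routine singular-integral machinery, is the bookkeeping forced by the Ericksen stress $\nabla\dd\odot\nabla\dd$ and the Ginzburg--Landau potential $F(\dd)$ in the pressure source — absent in the Navier--Stokes case — together with the fact that $\uone$ is only locally bounded rather than scale-covariant: one must isolate the genuinely scale-invariant part of the source (controlled by Lemma~\ref{lemma:CAB}/Lemma~\ref{lemma:interpolation}) from the lower-order pieces that produce the non-scale-invariant $\rho^{3/2}B^{3/4}(\rho)$ and $\rho^3$ terms, and keep exact track of the powers of $\rho/r$ through the interpolation and time-Hölder steps.
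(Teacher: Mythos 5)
Your overall strategy is the same as the paper's: split $\ptwo$ on $B_{\rho/2}$ into a Calder\'on--Zygmund piece $\ptwo_{(1)}$ driven by the localized pressure source and a harmonic remainder $\ptwo_{(2)}$, estimate $\ptwo_{(1)}$ termwise in $L^{3/2}$, and use the mean-value property on $\ptwo_{(2)}$ to produce the $\frac{r}{\rho}D(\rho)$ term. However, there is a genuine gap in how you handle the quadratic part of the source.

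You feed the raw tensors $\utwo\otimes\utwo$ and $\nabla\dd\odot\nabla\dd$ into the CZ estimate and then invoke ``Lemma \ref{lemma:CAB} at equal scales'' or, equivalently, Lemma \ref{lemma:interpolation} with $q=3$, $a=\tfrac34$, to convert $C(\rho)$ into $A,B$. But Lemma \ref{lemma:interpolation} applied to $\utwo$ itself carries the lower-order term $r^{3(1-q/2)}\bigl(\int|\utwo|^2\bigr)^{q/2}$, and Lemma \ref{lemma:CAB} with $r=\rho$ gives $C(\rho)\lesssim A^{3/2}(\rho)+A^{3/4}(\rho)B^{3/4}(\rho)$. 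Carrying this through the CZ bound and dividing by $r^2$ produces an extraneous term of order $(\rho/r)^2 A^{3/2}(\rho)$, which does \emph{not} appear in \eqref{eqn:DAB}. This is not a cosmetic loss: in the dichotomy argument for Theorem \ref{thm:mainReq}, $D^2(\theta\rho)$ would then contain $\theta^{-4}A^{3}(\rho)$ with no small prefactor, and the iteration $A^{3/2}+D^2\mapsto A^{3/2}+D^2$ cannot be closed (you would need smallness of $A^{3/2}(\rho)$ at the starting scale, which is exactly what you are trying to prove).

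The fix, and the one step you omit, is what the paper's proof does before invoking CZ: replace $\utwo$ and $\nabla\dd$ by $\utwo-(\utwo)_\rho$ and $\nabla\dd-(\nabla\dd)_\rho$ in the quadratic blocks, using $\div\utwo=0$ and the vanishing of $\div^2$ on constant tensors so that $\div^2\mathcal G=\div^2 T$. Once the source is written in terms of mean-zero quantities, the scale-invariant Gagliardo--Nirenberg inequality
$\norm{v-\bar v}_{L^3(B_\rho)}\lesssim \norm{\nabla v}_{L^2(B_\rho)}^{1/2}\norm{v-\bar v}_{L^2(B_\rho)}^{1/2}$
applies \emph{without} the lower-order term, and the time-H\"older step gives a clean $A^{3/4}(\rho)B^{3/4}(\rho)$. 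The cross-terms that this subtraction generates are exactly what produce the remaining $\rho^{3/2}B^{3/4}(\rho)$ (from $\int_{B_\rho}|\utwo-(\utwo)_\rho|^{3/2}$ via Poincar\'e and H\"older) and the $\rho^3$ (from the constants, $\uone\otimes\uone$, and $F(\dd)$), matching \eqref{eqn:DAB}. With this subtraction inserted, the rest of your outline goes through as written.
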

\begin{proof}
	Taking the divergence of \eqref{eqn:SELu2}$_1$ yields 
		\begin{align*}
			-\Delta {\ptwo}=&\div^2\Big[{\utwo}\otimes {\utwo}+\nabla {\dd}\odot \nabla {\dd}-\frac{1}{2}(\nabla {\dd}:\nabla {\dd}){\rm I}_3+{\uone}\otimes {\utwo}+{\utwo}\otimes {\uone}+{\uone}\otimes {\uone}-F({\dd}){\rm I}_3\Big]\\
		=	&\div^2\Big[({\utwo}-({\utwo})_\rho)\otimes ({\utwo}-({\utwo})_\rho)+(\nabla {\dd}-(\nabla {\dd})_\rho)\odot (\nabla {\dd}-(\nabla {\dd})_\rho)\\
   &-\frac{1}{2}(\nabla {\dd}-(\nabla {\dd})_\rho):(\nabla {\dd}-(\nabla {\dd})_\rho){\rm I}_3\\
			&+(\nabla {\dd}-(\nabla {\dd})_\rho)\odot (\nabla {\dd})_\rho+(\nabla {\dd})_\rho\odot (\nabla {\dd}-(\nabla {\dd})_\rho)\\
&-\frac{1}{2}(\nabla {\dd}-(\nabla {\dd})_\rho):(\nabla {\dd})_\rho {\rm I}_3\\
   &-\frac{1}{2}(\nabla {\dd})_\rho\odot (\nabla {\dd}-(\nabla {\dd})_\rho){\rm I}_3\\
			&+{\uone}\otimes ({\utwo}-({\utwo})_\rho)+({\utwo}-({\utwo})_\rho)\otimes {\uone}+{\uone}\otimes {\uone}-F({\dd}){\rm I}_3\Big]\\
		:=	&\div^2\mathcal{G} \text{ in }B_\rho. 	
		\end{align*}
	Here $({\utwo})_\rho$ denotes the average of ${\utwo}$ over $B_\rho$. 
	Let $\eta\in C_0^\infty(\T^3)$
 be a cut-off function of $B_{\rho/2}$ such that $|\nabla \eta|\lesssim \rho^{-1}$. Define an auxiliary function by
	\begin{align*}
		\widecheck{\ptwo}&(x)=-\int_{\T^3}\nabla_y^2 G(x-y):\eta^2(y) \mathcal{G}(y) {\rd y},
	\end{align*}
	where $G$ is the corresponding kernel of Laplacian on $\T^3$. Then we have
	$$-\Delta \widecheck{\ptwo}=\div^2\mathcal{G} \text{ in }B_{\rho/2},$$
	and 
	$$-\Delta ({\ptwo}-\widecheck{\ptwo})=0 \text{ in }B_{\rho/2}.$$
	From the singular integral theory (Calderon--Zygmund theory) and the $L^\infty$ bound on $({\uone}, {\dd})$ we can infer that
	\begin{align*}
		\int_{\T^3}|\widecheck{\ptwo}|^{\frac{3}{2}}{\rd x}\lesssim& \int_{\T^3}\eta^3\left( |{\utwo}-({\utwo})_\rho|^3+|\nabla {\dd}-(\nabla {\dd})_\rho|^3 \right)\\
		&+|(\nabla {\dd})_\rho|^{\frac{3}{2}}\int_{\T^3}\eta^3 |\nabla {\dd}-(\nabla {\dd})_\rho|^{\frac{3}{2}}+\int_{\T^3}\eta^3 |{\utwo}-({\utwo})_\rho|^{\frac{3}{2}}+\int_{\T^3}\eta^3\\
		\lesssim &\int_{B_\rho}(|{\utwo}-({\utwo})_\rho|^3+|\nabla {\dd}-(\nabla {\dd})_\rho|^3)+|(\nabla {\dd})_\rho|^{\frac{3}{2}}\int_{B_\rho}|\nabla {\dd}-(\nabla {\dd})_\rho|^{\frac{3}{2}}\\
&+\int_{B_\rho}(|{\utwo}-({\utwo})_\rho|^{\frac{3}{2}}+1)\end{align*}
	Notice that ${\ptwo}-\widecheck{\ptwo}$ is harmonic in $B_{\rho/2}$, and we have that for $0<r\le \rho/2$, 
	\begin{align*}
		\frac{1}{r^2}\int_{B_r}|{\ptwo}-\widecheck{\ptwo}|^{\frac{3}{2}}&\lesssim \left( \frac{r}{\rho} \right)\frac{1}{\rho^2}\int_{B_{\rho/2}}|{\ptwo}-\widecheck{\ptwo}|^{\frac{3}{2}}\\
		&\lesssim \left( \frac{r}{\rho} \right)\left[ \frac{1}{\rho^2}\int_{B_{\rho/2}}|{\ptwo}|^{\frac{3}{2}} +\frac{1}{\rho^2}\int_{B_{\rho/2}}|\widecheck{\ptwo}|^{\frac{3}{2}}\right].
	\end{align*}
	Integrating the expression above over $[-r^2, 0]$, we can show that 
	\begin{align*}
		\frac{1}{r^2}\int_{\Q_r(0)}|{\ptwo}|^{\frac{3}{2}}\lesssim& \left( \frac{r}{\rho} \right)\frac{1}{\rho^2}\int_{\Q_\rho}|{\ptwo}|^{\frac{3}{2}}+\left( \frac{\rho}{r} \right)^2\frac{1}{\rho^2}\int_{\Q_\rho}|\widecheck{\ptwo}|^{\frac{3}{2}}\\
		\lesssim& \left( \frac{r}{\rho} \right)D(\rho)+\left( \frac{\rho}{r} \right)^2 \Big[\frac{1}{\rho^2}\int_{\Q_\rho}|{\utwo}-({\utwo})_\rho|^3+|\nabla {\dd}-(\nabla {\dd})_\rho|^3\\
		&+\frac{1}{\rho^2}(\sup_{-\rho^2\le t\le 0}|(\nabla {\dd}(t))_\rho|^{\frac{3}{2}})\int_{\Q_\rho}|\nabla {\dd}-(\nabla {\dd})_\rho|^{\frac{3}{2}}\\
&+\frac{1}{\rho^2}\int_{\Q_\rho}|{\utwo}-({\utwo})_\rho|^{\frac{3}{2}} +\rho^3 \Big].
		\label{}
	\end{align*}
	Now we apply the interpolation inequality
	\begin{align*}
		&\frac{1}{\rho^2}\int_{\Q_\rho}(|{\utwo}-({\utwo})_\rho|^3+|\nabla {\dd}-(\nabla {\dd})_\rho|^3)\\
		&\lesssim\left( \sup_{-\rho^2\le t\le 0} \frac{1}{\rho}\int_{B_\rho}(|{\utwo}|^2+|\nabla {\dd}|^2)  \right)^{\frac{3}{4}}\left( \frac{1}{\rho}\int_{\Q_\rho}(|\nabla {\utwo}|^2+|\nabla^2 {\dd}|^2) \right)^{\frac{3}{4}}\\
&=A^{\frac{3}{4}}(\rho)B^{\frac{3}{4}}(\rho),
	\end{align*}
	and by H\"{o}lder's inequality and Poincar\'{e}'s inequality, we can verify that 
	\begin{align*}
		&\frac{1}{\rho^2}(\sup_{-\rho^2\le t\le 0}|(\nabla {\dd})_\rho|^{\frac{3}{2}})\int_{\Q_\rho}|\nabla {\dd}-(\nabla {\dd})_\rho|^{\frac{3}{2}}\\
		\lesssim& \rho^2\left( \sup_{-\rho^2\le t\le 0} \int_{B_\rho}|\nabla {\dd}|^2 \right)^{\frac{3}{4}}\left( \frac{1}{\rho}\int_{Q_\rho}|\nabla^2 {\dd}|^2  \right)^{\frac{3}{4}}\\
\le& A^{\frac{3}{4}}(\rho)B^{\frac{3}{4}}(\rho),
	\end{align*}
and
	\begin{align*}
		\frac{1}{\rho^2}\int_{\Q_\rho}|{\utwo}-({\utwo})_\rho|^{\frac{3}{2}}\lesssim \frac{1}{\rho^{2}}\left(\int_{\Q_\rho}|{\utwo}-({\utwo})_\rho|^2 \right)^{\frac{3}{4}}\lesssim\rho^{\frac{3}{2}} B^{\frac{3}{4}}(\rho).
	\end{align*}
\end{proof}
Now we have the interpolation estimate \eqref{eqn:CAB} and the pressure estimate \eqref{eqn:DAB}, the proof of Theorem \ref{thm:mainReq} is based on a dichotomy argument:
\begin{proof}
	[Proof of Theorem \ref{thm:mainReq}]. For $\theta\in(0, \frac{1}{2})$ and $\rho\in(0, 1)$, let $\varphi\in C_0^\infty(\Q_{\theta \rho}(0))$ be a function such that 
	$$\varphi=1 \text{ in }\Q_{\theta\rho/2}(0),\qquad |\nabla \varphi|\lesssim (\theta\rho)^{-1},\qquad |\nabla^2 \varphi|+|\pa_t \varphi|\lesssim (\theta\rho)^{-2}. $$
	Applying the local energy inequality in
\eqref{eq:ine2}
%
%
with $\varphi^2$,
the $L^\infty$-bounded of $({\uone}, {\dd})$, and integrating by parts, we obtain that 
	\begin{align*}
		&\sup_{-(\theta\rho)^2\le t\le 0}\int_{\T^3}(|{\utwo}|^2+|\nabla {\dd}|^2)\varphi^2 +\int_{\T^3\times [-(\theta \rho)^2, 0]}(|\nabla {\utwo}|^2+|\nabla^2 {\dd}|^2)\varphi^2 \\
	\lesssim	& \int_{\T^3\times[-(\theta \rho)^2, 0]}(|{\utwo}|^2+|\nabla {\dd}|^2)(|\pa_t \varphi|+|\nabla \varphi|^2+|\nabla^2 \varphi|)\\
		&+\int_{\T^3\times[-(\theta\rho)^2, 0]}[(|{\utwo}|^2-(|{\utwo}|^2)_\rho)+|{\ptwo}||{\utwo}|]|\nabla \varphi|\\
		&+\int_{\T^3\times[-(\theta \rho)^2, 0]}(|{\utwo}|^2+|\nabla {\dd}|^2) \varphi^2+ |\nabla \dd|^2|{\utwo}||\nabla \varphi|\\
&+\int_{\T^3\times[-(\theta \rho)^2, 0]}|{\uone}|^2 |\nabla \varphi|\\
		&+\int_{\T^3\times[-(\theta \rho)^2, 0]} \varphi^2. 
	\end{align*}

 Hence we obtain that 
	\begin{align*}
		&A\left( \frac{1}{2}\theta \rho \right)+B\left( \frac{1}{2}\theta \rho \right)\\
	=	&\sup_{-\left(\frac{\theta\rho}{2} \right)^2\le t\le 0}\frac{2}{\theta\rho}\int_{B_{\frac{\theta\rho}{2}}}(|{\utwo}|^2+|\nabla {\dd}|^2)+\frac{2}{\theta\rho}\int_{\Q_{\frac{\theta\rho}{2}}(0)}(|\nabla {\utwo}|^2+|\nabla^2 {\dd}|^2) \\
	\lesssim	& \sup_{-(\theta\rho)^2\le t\le 0}\frac{1}{\theta\rho}\int_{\T^3}(|{\utwo}|^2+|\nabla {\dd}|^2)\varphi^2+\frac{1}{\theta \rho}\int_{\T^3\times[-(\theta\rho)^2, 0]}(|\nabla {\uu}|^2+|\nabla^2 {\dd}|^2)\varphi^2 
	\end{align*}
	Putting together all the estimates, we have
	\begin{align*}
		&A(\frac{1}{2}\theta \rho)+B(\frac{1}{2}\theta\rho)\\
\lesssim& \left[ C^{\frac{2}{3}}(\theta\rho)+A^{\frac{1}{2}}(\theta \rho)B^{\frac{1}{2}}(\theta \rho) C^{\frac{1}{3}}(\theta \rho)+C^{\frac{1}{3}}(\theta \rho)D^{\frac{2}{3}}(\theta\rho)+(\theta \rho)^3 \right]\\
	\lesssim	& C^{\frac{2}{3}}(\theta \rho)+A(\theta\rho)B(\theta \rho)+D^{\frac{4}{3}}(\theta \rho)+(\theta\rho)^3. 
	\end{align*}
	So that 
	\begin{align*}
		A^{\frac{3}{2}}(\frac{1}{2}\theta\rho)&\lesssim C(\theta \rho)+A^{\frac{3}{2}}(\theta \rho)B^{\frac{3}{2}}(\theta \rho)+D^2(\theta\rho)+ (\theta \rho)^{\frac{9}{2}}.
	\end{align*}
	On the other hand, from Lemma \ref{lemma:CAB} and \ref{lemma:DAB} we can conclude that 
	\begin{align*}
		C(\theta\rho)&\lesssim \theta^3A^{\frac{3}{2}}(\rho)+\theta^{-3}A^{\frac{3}{4}}(\rho)B^{\frac{3}{4}}(\rho),\\
		D^2(\theta \rho)&\lesssim \theta^2 D^2(\rho)+\theta^{-4}(A^{\frac{3}{2}}(\rho)B^{\frac{3}{2}}(\rho)+\rho^3 B^{\frac{3}{2}}(\rho)+\rho^6),
	\end{align*}
	and it is easy to see that 
	$$A^{\frac{3}{2}}(\theta\rho)B^{\frac{3}{2}}(\theta \rho)\lesssim \theta^{-3}A^{\frac{3}{2}}(\rho)B^{\frac{3}{2}}(\rho).$$
	Therefore we conclude that for $0<\theta<\frac{1}{2}$, 
	\begin{align*}
		&A^{\frac{3}{2}}(\frac{1}{2}\theta\rho)+D^2(\frac{1}{2}\theta\rho)\\
		&\le c\left[ \theta^2 D^2(\rho)+\theta^{-4}(A^{\frac{3}{2}}(\rho)B^{\frac{3}{2}}(\rho)+\rho^3 B^{\frac{3}{2}}(\rho)+\rho^6)+\theta^3A^{\frac{3}{2}}(\rho)+\theta^{-8} A^{\frac{3}{2}}(\rho) B^{\frac{3}{2}}(\rho)+\theta^2+(\theta\rho)^{\frac{9}{2}} \right]\\
		&\le c(\theta^2+\theta^{-8}B^{\frac{3}{2}}(\rho))(A^{\frac{3}{2}}(\rho)+D^2(\rho))+c(\theta^{-4}\rho^3 B^{\frac{3}{2}}(\rho)+\theta^{-4}\rho^6+\theta^2+(\theta\rho)^{\frac{9}{2}}).
	\end{align*}
	For $\ve_0$ given by Lemma \ref{lemma:smallenergyimprv}, let $\theta_0\in(0, \frac{1}{2})$ be such that 
	$$c\theta_0=\min\{\frac{1}{4}, \frac{1}{8}\ve_1^2\}.$$
	Since $$\limsup_{r\to0}\frac{1}{r}\int_{\Q_r}(|\nabla {\utwo}|^2+|\nabla^2 {\dd}|^2)<\ve_1^2$$
	we can choose $\rho_0>0$ such that 
	$$c\theta_0^{-8}B^{\frac{3}{2}}(\rho)\le \frac{1}{4}\qquad \forall 0<\rho<\rho_0,$$ 
	and 
	$$c(\theta_0^{-4}\rho^3 B^{\frac{3}{2}}(\rho)+\theta_0^{-4}\rho^6+\theta_0^2+(\theta_0\rho)^{\frac{9}{2}})\le \frac{1}{2}\ve_1^2 \qquad \forall 0<\rho<\rho_0.$$
	Therefore we obtained that 
	$$A^{\frac{3}{2}}(\frac{1}{2}\theta_0 \rho)+D^2(\frac{1}{2}\theta_0 \rho)\le \frac{1}{2}(A^{\frac{3}{2}}(\rho)+D^2(\rho))+\frac{1}{2}\ve_1^2, \qquad \forall 0<\rho<\rho_0.$$
	Iterating this inequality yields that 
	$$A^{\frac{3}{2}}( (\frac{1}{2}\theta_0)^k \rho)+D^2( (\frac{1}{2}\theta_0)^k \rho)\le \frac{1}{2^k}(A^{\frac{3}{2}}(\rho)+D^2(\rho))+\ve_1^2$$
	holds for all $0<\rho<\rho_0$ and $k\ge 1$. Furthermore, we can obtain from \eqref{eqn:CAB} that 
	\begin{align*}
		C( (\frac{1}{2}\theta_0)^k \rho)&\le c[(\frac{1}{2}\theta_0)^3 A^{\frac{3}{2}}( (\frac{1}{2}\theta_0)^{k-1}\rho)+(\frac{1}{2}\theta_0)^{-3}A^{\frac{3}{4}}( (\frac{1}{2}\theta_0)^{k-1}\rho)B^{\frac{3}{4}}( (\frac{1}{2}\theta_0)^{k-1}\rho)]\\
		&\le c[(\frac{1}{2}\theta_0)^3+(\frac{1}{2}\theta_0)^{-3}\ve_1^{\frac{3}{2}}][(\frac{1}{2})^{k-1}(A^{\frac{3}{2}}(\rho)+D^2(\rho))+\ve_1^2]
	\end{align*}
	holds for all $0<\rho<\rho_0$ and $k\ge 1$. 
	Then we can arrive at 
	$$\limsup_{k\to \infty}\left[ C( (\frac{1}{2}\theta_0)^k \rho)+D^2( (\frac{1}{2}\theta_0)^k \rho) \right]\le c[1+(\frac{1}{2}\theta_0)^3+(\frac{1}{2}\theta_0)^{-3}\ve_1^{\frac{3}{2}}]\ve_1^{2}\le \frac{1}{2}\ve_0^3$$
	holds for all $\rho\in (0, \rho_0)$ provided 
that
$\ve_1=\ve_1(\theta_0, \ve_0)>0$ is chosen sufficiently small. Hence there exists a $k_0>0$ such that 
	$$C( (\frac{1}{2}\theta_0)^{k_0}\rho)+D( (\frac{1}{2}\theta_0)^{k_0}\rho)\le \ve_0^3$$
	which satisfies the assumption of Lemma \ref{lemma:smallenergyimprv}, and as a consequence, $z_0=0$ is a regular point. 
\end{proof}
\subsection{The Hausdorff measure estimate}

Now we have all the ingredients to prove Theorem \ref{thm:main2}. It is based on a standard covering argument which we detailed below for the reader's convenience. 
\begin{proof}[Proof of Theorem \ref{thm:main2}] If $z\in \Sigma$, then by Theorem \ref{thm:mainReq}, 
	\begin{equation*}
		\limsup_{r\to0}\frac{1}{r}\int_{\Q_r(z)}(|\nabla {\utwo}|^2+|\nabla^2 {\dd}|^2)\ge \ve_1.
	\end{equation*}
	Let $V$ be a neighborhood of $\Sigma$ and $\eta>0$ such that for all $z\in \Sigma$, then there exists a $r<\eta$ such that $\Q_\eta(z)\subset V$ and 
	$$ \frac{1}{r}\int_{\Q_r(z)}(|\nabla {\utwo}|^2+|\nabla^2 {\dd}|^2)\ge \ve_1. $$
	By Vitali's covering lemma, we can find a pairwisely disjoint collection of $\Q_{r_i}(z_i)$ such that 
	\begin{equation*}
		\Sigma\subset \bigcup_{i=1}^\infty \Q_{5r_i}(z_i). 
	\end{equation*}
	Hence, 
	\begin{equation}
		\begin{aligned}
			\mathcal{P}_{5\eta}^1(\Sigma)&\le\sum_{i=1}^\infty 5r_i\le \frac{5}{\ve_1}\sum_{i=1}^\infty\int_{\Q_{r_i}(z_i)} (|\nabla {\utwo}|^2+|\nabla^2 {\dd}|^2)\\
			&\le \frac{5}{\ve_1}\int_{\bigcup_i \Q_{r_i}(z_i)}(|\nabla {\utwo}|^2+|\nabla^2 {\dd}|^2)\\
			&\le \frac{5}{\ve_1}\int_V(|\nabla {\utwo}|^2+|\nabla^2 {\dd}|^2)<\infty.
		\end{aligned}\label{eqn:Hausdofest}
	\end{equation}
	
Then we can conclude that $\Sigma$ is of zero Lebesgue measure $\rd x\rd t$. Furthermore,  we can choose $V$ with arbitrarily small measure, then by the absolute continuity of integral for $|\nabla {\utwo}|^2+|\nabla^2 {\dd}|^2$, we can conclude from \eqref{eqn:Hausdofest} that  $\mathcal{P}^1(\Sigma)=0$. 
\end{proof}

\section*{Declarations}


\begin{itemize}
\item The authors declare that no funds, grants, or other support were received during the preparation of this manuscript. 
\item The authors have no relevant financial or non-financial interests to disclose.
\item The authors have no competing interests to declare that are relevant to the content of this article.
\item All authors certify that they have no affiliations with or involvement in any organization or entity with any financial interest or non-financial interest in the subject matter or materials discussed in this manuscript.
\end{itemize}
%
%
\bigskip
%
%
%
%
%
\begin{appendices}

\section{Proofs of some preliminary results}\label{secA1}


For the reader’s convenience, we include in this appendix additional details for several results whose full derivations were too lengthy to present in the main text.

We begin by providing a proof of Lemma \ref{lemma:boundz} from Section \ref{sec:def}. To this end, we first establish the following preliminary result, which is essential to the proof.

\begin{lemma}[A preliminary result]
\label{lemma:boundz1}
%
  Assuming that
\begin{equation}\label{eq:zregup1}
W(\omega)\in C^{\frac{1}{2}-\varepsilon}([0, \infty); D(A^{\frac{1}{4}+\beta})) \mbox{
$\mathbb P$-a.s. $\omega$, 
}
\end{equation} for some $\beta>\varepsilon>0$, it follows that    \eqref{eqn:zregu} holds. 
%
\end{lemma}
 \begin{proof}[Proof of Lemma  \ref{lemma:boundz1}]
From \eqref{eqn:SELu1}, we derive in the mild form  
\begin{equation*}
	{\uone}(t)=\int_0^te^{-(t-s)A}\rd W(s)=\sum_i^\infty \int_0^t e^{-(t-s)A}b_i \rd W_i(s).
\end{equation*}
By the Doob inequality and the Burkholder--Davis--Gundy inequality, for  arbitrary p, $\tilde{\delta}$, and $\varepsilon$, such that  $p>1$,  $\tilde \delta>0$, and $\varepsilon>0$, 
we have the following estimation
\begin{equation}\label{eqn:zestima}
\begin{aligned}
 & \E\left[ \sup_{0\le t\le T}\|{\uone}(t)\|_{D(A^{\tilde{\delta}+\frac{1}{2}-\varepsilon})}^{p} \right]\\
=	&\E\left[\sup_{0\le t\le T} \|A^{\tilde{\delta}+\frac{1}{2}-\varepsilon}{\uone}(t)\|_{L^2(\T^3)}^p\right]\\
 \le & C(p) \E\left( \sum_{i=1}^{\infty} \gamma_i\int_{0}^{T}\|A^{\frac{1}{2}-\varepsilon}e^{-(T-s)A} A^{\tilde{\delta}} b_i\|_{L^2(\T^3)}^2 \rd s  \right)^{\frac{p}{2}}\\
 \le & C(p)\E \left( \sum_{i=1}^{\infty}\gamma_i \int_{0}^{T}\|A^{\frac{1}{2}-\ve}e^{-(T-s)A}\|^2_{\mathcal{B}(L^2(\T^3))}\|A^{\tilde{\delta}} b_i\|_{L^2(\T^3)}^2 \rd s \right)^{\frac{p}{2}}\\
 \le & C(p)\left( \sum_{i=1}^{\infty}\gamma_i\int_0^T \frac{1}{(T-s)^{1-2\ve}}ds  \right)^{\frac{p}{2}}
\end{aligned}
\end{equation}
where $C(p)$ is a constant that only depends on $p$. 
%
%
%
This estimate immediately implies that, 
%
%
%
whenever   $W(\omega) \in  C^{1/2 - \ve} \left (  [0, T] ;  D (A^{\tilde \delta}) \right)$ for  $\mathbb P$- a.s. $\omega$, then
 it follows that 
\begin{equation}\label{eq:omega1}
 {\uone} (\omega)\in L^\infty(0, T; D(A^{{\tilde{\delta}}+\frac{1}{2}-\ve})) \mbox{
for $\mathbb P$- a.s. $\omega$}. 
\end{equation}
Now we can set $\tilde \delta= {1}/{4}+\beta$  for some $\beta>\ve>0$,  which then together with   the Sobolev embedding implies that 
\begin{equation*}
D(A^{{\tilde{\delta}}+\frac{1}{2}-\ve})
= D(A^{\frac{3}{4}+\beta-\ve})\subset H^{\frac{3}{2}+2\beta-2\ve}(\T^3)\subset L^\infty(\T^3). 
\end{equation*}
Combining this with \eqref{eq:omega1},   we conclude that \eqref{eqn:zregu} holds whenever \eqref{eq:zregup1} is satisfied.
%
%
%
\end{proof}

%
With Lemma \ref{lemma:boundz1} established, it remains to verify that the noise assumption given in \eqref{eq:assum5} indeed implies \eqref{eq:zregup1}, thereby completing the proof of Lemma \ref{lemma:boundz}.
From \eqref{eq:assum5} 
and \eqref{eqn:assum2} we have 
\begin{equation}\label{eqn:assum2p}
W(t) \in  C^{\beta} \left (  [0, T] ;  D (A^\delta) \right) \quad \mathbb P- a.s. , 
\end{equation} for some $\delta>3/4$ and  every $\beta  \in (0,  1/2)$.
Hence, 
 Hence, there exist parameters $\beta$ and $\varepsilon$ such that $\beta>\varepsilon>0$, $\beta\geq {1}/{2}-\varepsilon $ and $\delta \geq 1/4 + \beta$; 
 for example we may  choose $\varepsilon$ to be $1/4$ and $\beta$ to be in $(1/4, 1/2)$. With such a choice, \eqref{eqn:assum2p} implies \eqref{eq:zregup1}, and therefore Lemma \ref{lemma:boundz1} applies under assumption \eqref{eq:assum5}. This completes the proof of Lemma \ref{lemma:boundz}.

\section{ {Energy equality for the approximation sequence}}\label{secA2}

 {For the reader’s convenience, we present detailed calculations leading to the global energy estimates for the approximating sequence of the modified Ericksen–Leslie equations. These computations motivate the strategy for obtaining uniform estimates independent of the approximation parameter (see the proof of Lemma \ref{lem:estimates}).}

    \begin{lemma}[{Energy equality for the approximating equations}]

Fix $\omega \in \Omega$ such that  ${\uone}(\omega)\in  L_{\rm loc}^\infty(\mathbb{T}^3\times[0,\infty))$. Consider the triple $({\utwo}^\sigma(\omega), {\ptwo}^\sigma(\omega),  {\dd}^\sigma(\omega))$,  a classical strong (in the PDE sense) solution   to \eqref{eqn:appro}, then we have the following equality:
\label{lemma:LocalEnergy1} 
  \begin{equation}
    \begin{aligned}
      &\frac{{\rd}}{{\rd}t}\int_{\T^3}\left( \frac{|{\utwo}^\sigma|^2}{2}+\frac{|\nabla {\dd}^\sigma|^2}{2}+F({\dd}^\sigma) \right)\varphi+\int_{\T^3}\left( |\nabla {\utwo}^\sigma|^2+|\Delta {\dd}^\sigma|^2+|f({\dd})^\sigma|^2 \right)\varphi\\
    =  & \int_{\T^3}\left( \frac{|{\utwo}^\sigma|^2}{2}+\frac{|\nabla {\dd}^\sigma|^2}{2}+F({\dd}^\sigma) \right)\pa_t \varphi+\int_{\T^3}[({\uone}+
\Phi_\sigma[{\utwo}^\sigma])
 \cdot 
\nabla{{\utwo}}^\sigma]
\cdot {{\uone}} \varphi\\
      &+\int_{\T^3}\left(\frac{|{\utwo}^\sigma
|^2}{2}+{\utwo}^\sigma
\cdot {\uone} \right)({\uone}+
\Phi_\sigma[{\utwo}^\sigma]
)\cdot \nabla \varphi+{\ptwo} ^\sigma{\utwo}^\sigma
\cdot \nabla \varphi+\left( \frac{|{\utwo}^\sigma
|^2}{2}+\frac{|\nabla {\dd}^\sigma
|^2}{2} \right)\Delta \varphi\\
      &+\int_{\T^3}\left( \nabla {\dd}^\sigma
\odot \nabla {\dd}^\sigma
-\frac{1}{2}|\nabla {\dd}^\sigma
|^2 {\rm I}_3 \right): \nabla^2 \varphi\\
      &+\int_{\T^3}(\uone
\cdot \nabla) \Phi_\sigma[{\dd}^\sigma]
\cdot (\Delta {\dd}^\sigma-f({\dd}^\sigma))\varphi+\int_{\T^3}[({\uone}+{\utwo}^\sigma
)\cdot \nabla {\dd}^\sigma
]\cdot (\nabla \varphi\cdot \nabla ){\dd}^\sigma\\
      &-\int_{\T^3}f({\dd}^\sigma)\cdot (\nabla \varphi\cdot \nabla ){\dd}^\sigma
 {-}
2\int_{\T^3}\nabla f({\dd}^\sigma):\nabla {\dd}^\sigma\varphi. 
    \end{aligned}\label{eqn:LocEngy}
\end{equation}
 \end{lemma}
%
\begin{proof}
We first note that all the subsequent calculations are valid as  there exists 
 a classical solution to \eqref{eqn:appro} for every fixed $\sigma$, according to    \eqref{eq:smooth}. 

We take the inner product of 
\eqref{eqn:appro}$_1$ with respect to ${\utwo}^\sigma\varphi$,   and proceed to analyze each term  separately. 
We begin with the following: 
  \begin{equation*}
    \int_{\T^3}\pa_t {\utwo}^\sigma\cdot {\utwo}^\sigma \varphi=\frac{\rd}{{\rd t}}\int_{\T^3}\frac{|{\utwo^\sigma}|^2}{2}\varphi-\int_{\T^3}\frac{|{\utwo^\sigma}|^2}{2}\pa_t \varphi.
  \end{equation*}
For the second term using integration by parts we have
  \begin{align*}
    \int_{\T^3}-\Delta {\utwo}^\sigma
\cdot {\utwo}^\sigma
\varphi&=\int_{\T^3}\nabla {\utwo}^\sigma
:\nabla ({\utwo}^\sigma
\varphi)\\
    &=\int_{\T^3}|\nabla {\utwo}^\sigma
|^2 \varphi+\int_{\T^3}\nabla {\utwo}^\sigma
: ({\utwo}^\sigma
\otimes \nabla \varphi)\\
    &=\int_{\T^3}|\nabla {\utwo}^\sigma|^2 \varphi+\int_{\T^3}\nabla\left( \frac{|{\utwo}^\sigma|^2}{2} \right)\cdot \nabla \varphi\\
    &=\int_{\T^3}|\nabla {\utwo}^\sigma|^2 \varphi-\int_{\T^3}\frac{|{\utwo}^\sigma|^2}{2}\Delta \varphi.
  \end{align*}
For the third term, we have
  \begin{align*}
    &\int_{\T^3}[({\uone}+{\utwo}^\sigma
)\cdot \nabla ({\uone}+
\Phi_\sigma[{\utwo}^\sigma]
)]\cdot {\utwo}^\sigma
\varphi\\
  =  
&-
 \int_{\T^3}[  ({\uone}+{\utwo}^\sigma
)
\cdot  ({\uone}+
\Phi_\sigma[{\utwo}^\sigma]
)]\cdot {\utwo}^\sigma
\nabla \varphi 
-
\int_{\T^3}[  ({\uone}+{\utwo}^\sigma
)
\cdot  ({\uone}+
\Phi_\sigma[{\utwo}^\sigma]
)]\cdot \nabla {\utwo}^\sigma
 \varphi 
\\
 =  
&-
 \int_{\T^3}[  ({\uone}+{\utwo}^\sigma
)
\cdot  ({\uone}+
\Phi_\sigma[{\utwo}^\sigma]
)]\cdot {\utwo}^\sigma
\nabla \varphi 
-
\int_{\T^3}[   
 ({\uone}+
\Phi_\sigma[{\utwo}^\sigma]
)]\cdot \nabla {\utwo}^\sigma\cdot 
 {\uone} \varphi 
\\
&-
\int_{\T^3}[   {\utwo}^\sigma
\cdot  ({\uone}+
\Phi_\sigma[{\utwo}^\sigma]
)]\cdot \nabla {\utwo}^\sigma
 \varphi 
\\
 =  
&-
\int_{\T^3}[   
 ({\uone}+
\Phi_\sigma[{\utwo}^\sigma]
)]\cdot \nabla {\utwo}^\sigma\cdot 
 {\uone} \varphi 
-
 \int_{\T^3}[   {\uone}  
\cdot  ({\uone}+
\Phi_\sigma[{\utwo}^\sigma]
)]\cdot {\utwo}^\sigma
\nabla \varphi 
\\
&
-
 \int_{\T^3}[  {\utwo}^\sigma
\cdot  ({\uone}+
\Phi_\sigma[{\utwo}^\sigma]
)]\cdot {\utwo}^\sigma
\nabla \varphi 
-
\int_{\T^3}[  
  ({\uone}+
\Phi_\sigma[{\utwo}^\sigma]
)]\cdot \nabla \left ( \dfrac{ |{\utwo}^\sigma  |^2  }{2}\right)
 \varphi 
\\
 =  
&-
\int_{\T^3}[   
 ({\uone}+
\Phi_\sigma[{\utwo}^\sigma]
)]\cdot \nabla {\utwo}^\sigma\cdot 
 {\uone} \varphi 
-
 \int_{\T^3}[   {\uone}  
\cdot  ({\uone}+
\Phi_\sigma[{\utwo}^\sigma]
)]\cdot {\utwo}^\sigma
\nabla \varphi 
\\
&
-
\int_{\T^3}[  
  ({\uone}+
\Phi_\sigma[{\utwo}^\sigma]
)]\cdot\left ( \dfrac{ |{\utwo}^\sigma  |^2  }{2}\right)
 \nabla  \varphi 
\\
 =  
&-
\int_{\T^3}[   
 ({\uone}+
\Phi_\sigma[{\utwo}^\sigma]
)]\cdot \nabla {\utwo}^\sigma\cdot 
 {\uone} \varphi 
-
 \int_{\T^3} 
\left [   {\uone}  
\cdot    {\utwo}^\sigma
+ \left ( \dfrac{ |{\utwo}^\sigma  |^2  }{2}\right)
\right]
({\uone}+
\Phi_\sigma[{\utwo}^\sigma]
)
\nabla \varphi 
  \end{align*}
  Next, we have
  \begin{equation*}
    \int_{\T^3}\nabla {\ptwo} ^\sigma\cdot {\utwo}^\sigma \varphi=-\int_{\T^3}{\ptwo}^\sigma {\utwo} ^\sigma\cdot \nabla \varphi.
  \end{equation*}
 For the term on the right-hand side, we have:
 \begin{align}\label{eqn:canc1}
 &\int_{\T^3} [{\utwo} ^\sigma
\cdot\nabla]
\Phi_\sigma[{\dd}^
    \sigma]
\cdot (-\Delta {\dd}^\sigma+f({\dd}^\sigma))\varphi\nonumber\\
  = & \underbrace{-\int_{\T^3} [{\utwo} ^\sigma
\cdot\nabla]
\Phi_\sigma[{\dd}^
    \sigma]
\cdot \Delta {\dd}^\sigma
\varphi}_{\mathcal I} \underbrace{+
    \int_{\T^3} [{\utwo} ^\sigma
\cdot\nabla] 
\Phi_\sigma[{\dd}^
    \sigma]
\cdot f({\dd}^\sigma
)\varphi  }_{\mathcal {II}}. 
  \end{align}
  Differentiating \eqref{eqn:appro}$_3$ gives
  \begin{equation}    \label{eqn:nabladeq}
    \pa_t \nabla {\dd}^\sigma+\nabla\left[ ({\uone}+{\utwo}^\sigma
)\cdot \nabla
 \Phi_\sigma[{\dd}^
    \sigma] 
\right]=\Delta (\nabla {\dd}^\sigma
)-\nabla (f({\dd}^\sigma
)).
  \end{equation}  We take the inner product of \eqref{eqn:nabladeq} 
with   $\nabla {\dd}^\sigma\varphi$,
  and proceed to analyze each term  separately. 
We begin with the following: 
  \begin{equation*}
    \int_{\T^3}\pa_t(\nabla {\dd}^\sigma
):\nabla {\dd}^\sigma
\varphi=\frac{\rd}{{\rd t}}\int_{\T^3}\frac{1}{2}|\nabla {\dd}^\sigma
|^2 \varphi-\int_{\T^3}\frac{1}{2}|\nabla {\dd}^\sigma
|^2 \pa_t \varphi. 
  \end{equation*}
 Secondly, we have  
  \begin{align}\label{eqn:cancel2}
   & \int_{\T^3}\nabla[({\uone}+{\utwo}^\sigma
)\cdot \nabla 
\Phi_\sigma[{\dd}^
    \sigma] 
]:\nabla {\dd}
^\sigma
\varphi \nonumber\\
=&\int_{\T^3}-[({\uone}+{\utwo}^\sigma
)\cdot \nabla]
\Phi_\sigma[{\dd}^
    \sigma] 
\cdot \Delta {\dd}^\sigma
\varphi- [({\uone}+{\utwo}^\sigma
)\cdot \nabla]\Phi_\sigma[{\dd}^
    \sigma] 
\cdot [\nabla\varphi\cdot \nabla]{\dd}^\sigma\nonumber\\
  =  &- \int_{\T^3}
[({\uone}+{\utwo}^\sigma)\cdot \nabla]
\Phi_\sigma[{\dd}^
    \sigma] 
\cdot \Delta {\dd}^\sigma
\varphi
- \int_{\T^3}
[({\uone}+{\utwo}^\sigma)\cdot \nabla]\Phi_\sigma[{\dd}^
    \sigma] 
\cdot [\nabla\varphi\cdot \nabla]{\dd}^\sigma\nonumber\\
   = &\underbrace{-\int_{\T^3}[ {\uone}  \cdot \nabla]
\Phi_\sigma[{\dd}^
    \sigma] 
\cdot \Delta {\dd}
^\sigma
\varphi}_{\mathcal{III}}
   \underbrace{ -\int_{\T^3}[  {\utwo}
^\sigma
 \cdot \nabla]
\Phi_\sigma[{\dd}^
    \sigma] 
\cdot \Delta {\dd} ^\sigma
\varphi }_{\mathcal{IV}} \nonumber  \\
&   -\int_{\T^3} [({\uone}+{\utwo}
^\sigma
)\cdot \nabla]
\Phi_\sigma[{\dd}^
    \sigma] 
\cdot [\nabla\varphi\cdot \nabla]{\dd}^\sigma      .
  \end{align}
Finally, for the right-hand side term, we obtain the following after applying integration by parts:
  \begin{equation}
\label{eqn:energye1}
\begin{aligned}
   & \int_{\T^3}\nabla(\Delta {\dd}
^\sigma
-f({\dd}^\sigma)) :\nabla {\dd}
^\sigma
\varphi\\
=&-\int_{\T^3}|\Delta {\dd}
^\sigma
|^2 \varphi-\int_{\T^3} \Delta {\dd}
^\sigma
\cdot (\nabla\varphi \cdot \nabla {\dd}
^\sigma
)-\int_{\T^3}\nabla f({\dd}
^\sigma
):\nabla {\dd}
^\sigma
\varphi\\
    =&-\int_{\T^3}|\Delta {\dd}
^\sigma
|^2\varphi+\int_{\T^3}\frac{1}{2}|\nabla {\dd}
^\sigma
|^2 \Delta \varphi+\int_{\T^3}\left( \nabla {\dd}
^\sigma
\odot \nabla {\dd}
^\sigma
-\frac{1}{2}|\nabla {\dd}
^\sigma
|^2{\rm I}_3 \right):\nabla^2 \varphi\\
    &-\int_{\T^3}\nabla f({\dd}
^\sigma
):\nabla {\dd}
^\sigma
 \varphi.
     \end{aligned}
     \end{equation}
We now return to \eqref{eqn:appro}$_3$ and take the inner product of both sides with $f(\dd^\sigma)\varphi$. This yields:
\begin{align}
  \int_{\T^3}\pa_t {\dd}
^\sigma
\cdot f({\dd}^\sigma
)\varphi&=\frac{\rd}{{\rd t}}\int_{\T^3}F({\dd}^\sigma
)\varphi-\int_{\T^3}F({\dd}^\sigma
)\pa_t \varphi,\nonumber\\
  \int_{\T^3}[({\uone}+{\utwo}
^\sigma
)\cdot \nabla 
 \Phi_\sigma[{\dd}^\sigma]
%
]\cdot f({\dd}^\sigma
)\varphi &=\underbrace{\int_{\T^3}(\uone\cdot\nabla
 \Phi_\sigma[{\dd}^\sigma]
)\cdot f(\dd^\sigma
)\varphi}_{\mathcal{V}}
  \underbrace{+\int_{\T^3}(\utwo\cdot\nabla
 \Phi_\sigma[{\dd}^\sigma]
)\cdot f(\dd^\sigma
) \varphi}_{\mathcal{VI}},  \nonumber\\
  \int_{\T^3}(\Delta {\dd}^\sigma
-f({\dd}^\sigma
))\cdot f({\dd}^\sigma
)\varphi&=-\int_{\T^3}\nabla {\dd}
^\sigma:\nabla f({\dd}^\sigma
)\varphi+f({\dd}^\sigma
)\cdot (\nabla \varphi\cdot \nabla {\dd}^\sigma
)+|f({\dd}^\sigma
)|^2 \varphi.\nonumber
\end{align}
Combining
all the above identities, we obtain \eqref{eqn:LocEngy}; 
particularly, we utilize the following cancellations
\begin{align*}
\mathcal {I} - \mathcal{IV}=0,\\
\mathcal{II} - \mathcal{VI}=0.
\end{align*}
 Moreover, note that we have 
 \begin{equation*}
\mathcal{III} + \mathcal {V}=-\int_{\T^3}(\uone\cdot \nabla)
\Phi_\sigma[{\dd}^\sigma]
\cdot (\Delta {\dd}^\sigma
-f({\dd}^\sigma
))\varphi \end{equation*} 
which then yields the seventh term  on the right-hand-side of \eqref{eqn:LocEngy}.
 \end{proof}
We remark that the above computations remain valid if we choose  $\varphi$ to be a constant function and the same calculation will result in the following global energy equality:
 \begin{corollary}\label{remark:energy}
Under the
%
 {same}
assumptions as in Lemma \ref{lemma:LocalEnergy1}, it holds that
\begin{equation}
  \begin{aligned}
    &\frac{\rd}{{\rd t}}\int_{\T^3}\frac{1}{2}(|{\utwo}^\sigma|^2+|\nabla {\dd}^\sigma|^2)+F({\dd}^\sigma) {\rd x}
 +\int_{\T^3} |\nabla {\utwo}^\sigma|^2  {\rd x}
 \\
 &
+\int_{\T^3}|\Delta {\dd}^\sigma-f({\dd}^\sigma)|^2 {\rd x}
 \\
 =&\int_{\T^3}[({\uone}+\Phi_\sigma[{\utwo}^\sigma])\cdot \nabla {\utwo}^\sigma]\cdot {\uone}   {\rd x}
+\int_{\T^3} {\uone}\cdot \nabla \Phi_\sigma[{\dd}^\sigma] \cdot (\Delta {\dd}^\sigma-f({\dd}^\sigma)){\rd x}  .
  \end{aligned}
  \label{eqn:Engeqp}
\end{equation}

 \end{corollary}

\section{Details of derivations of   estimations for the approximation sequence}\label{secA3}
 

In this section, we present the detailed steps leading from \eqref{eqn:Engeq} to \eqref{eqn:GronEst}, as part of the proof of Lemma \ref{lem:estimates}. This exposition is included to facilitate the reader’s understanding of the underlying computations.

Firstly, observing that { $\|\nabla^2 {\dd}^\sigma\|_{L^2(\T^3)}^2=\|\Delta {\dd}^\sigma\|_{L^2(\T^3)}^2 $}, from  \eqref{eqn:Engeq} we have 
\begin{equation} 
  \begin{aligned}
    &\frac{\rd}{{\rd t}}\int_{\T^3}\frac{1}{2}(|{\utwo}^\sigma|^2+|\nabla {\dd}^\sigma|^2)+F({\dd}^\sigma) {\rd}x+|\nabla {\utwo}^\sigma|^2_{L^2 (\T^3)}
 \\
 &
+\dfrac{1}{8}\int_{\T^3}|\Delta {\dd}^\sigma-f({\dd}^\sigma)|^2 {\rd x}
\\
&
+
C
\int_{\T^3}[|\Delta {\dd}^\sigma|^2+|f({\dd}^\sigma)|^2 + |\nabla {\dd}^\sigma|^2|{\dd}^\sigma|^2+ |(\nabla {\dd}^\sigma)^{\top} {\dd}^\sigma|^2]{\rd x}
\\
   &\leq 
 C\left( \|{\uone}\|_{L^4(\T^3)}^4
+  {\dfrac{1}{2} 
 \| {\utwo}^\sigma\|_{L^2(\T^3)}^{2} 
}
\right)
+ C \|{\uone}\|_{L^4(\T^3)}^{4}\|{\utwo}^\sigma\|_{L^2(\T^3)}^2 
\\
&\quad + \dfrac{6}{4}\int_{\T^3} |\nabla {\dd}^\sigma|^2 {\rd x}.
  \end{aligned}
 \label{eqn:Engeq3}
\end{equation}
We define
\begin{align}
U (t) : = \int_{\T^3}\frac{1}{2}(|{\utwo}^\sigma (t)|^2+|\nabla {\dd}^\sigma (t)|^2)+F({\dd}^\sigma (t) ) {\rd}x,
\end{align}
%
Combining this with \eqref{eqn:Engeq3} and   the fact that $F({\dd}^\sigma (t))$ is nonnegative for every $t$, we obtain 
\begin{equation}  \label{eqn:Engeq24}
  \begin{aligned}
     \frac{\rd}{{\rd t}} U (t) 
 &\leq 
 C\left( \|{\uone}\|_{L^4(\T^3)}^4
+   \dfrac{1}{2}\right)
 U (t)
%
 + C \|{\uone}\|_{L^4(\T^3)}^{4} ,  
  \end{aligned}
\end{equation}
which implies that 
\begin{equation}  \label{eqn:Engeq26}
  \begin{aligned}
     \frac{\rd}{{\rd t}} U (t) 
- C\left( \|{\uone}\|_{L^4(\T^3)}^4
+   \dfrac{1}{2}\right)
 U (t)
 &\leq 
%
  C \|{\uone}\|_{L^4(\T^3)}^{4}   .
  \end{aligned}
\end{equation}
Multiplying both sides of the above inequality by 
\begin{equation*}
\exp \left({-  C \int ^t_0 \left( \|{\uone} (s)\|_{L^4(\T^3)}^4
+   \dfrac{1}{2}\right) ds}\right), 
\end{equation*} we obtain
\begin{equation*}  \label{eqn:Engeq251}
  \begin{aligned}
     \frac{\rd}{{\rd t}} 
\left ( 
e^{ {-  C \int ^t_0 \left( \|{\uone} (s)\|_{L^4(\T^3)}^4
+   \frac{1}{2}\right) ds} }
U (t) 
\right)
 &\leq 
C  e^{{-  C \int ^t_0  ( \|{\uone} (s)\|_{L^4(\T^3)}^4
+   \frac{1}{2} ) ds}}
%
  \|{\uone}\|_{L^4(\T^3)}^{4}   
 \leq C  \|{\uone}\|_{L^4(\T^3)}^{4}  . 
  \end{aligned}
\end{equation*}
Integrating both sides of
the above inequality
 from 0 to t, for any $t \in (0, T]$, 
and then   taking the supremum over $[0, T]$,
we have 
\begin{equation}  \label{eqn:Engeq251}
  \begin{aligned}
\sup_{0 \leq t \leq T}
U (t) 
%
&\leq e^{ {   C \int ^T_0 \left( \|{\uone} (s)\|_{L^4(\T^3)}^4
+   \frac{1}{2}\right) ds} } 
\left ( U(0) + 
C 
\int _0 ^T  \|{\uone} (t)\|_{L^4(\T^3)}^{4}   dt\right)
%
%
  \end{aligned}
\end{equation}
 %
Returning to \eqref{eqn:Engeq3}, we integrate both sides over the interval $[0, t]$ and subsequently take the supremum over $t \in [0, T]$, which yields
\begin{equation} 
  \begin{aligned}
%
 &\sup_{t \in [0, T]}
U (t)
+
\int_0 ^T\left (
|\nabla {\utwo}^\sigma (t)|^2_{L^2 (\T^3)}
+\dfrac{1}{8}\int_{\T^3}|\Delta {\dd}^\sigma (t)-f({\dd}^\sigma (t))|^2 {\rd x}\right) 
\rd
t
\\
&
+
C
\int_0^T
\int_{\T^3}[|\Delta {\dd}^\sigma (t)
|^2+|f({\dd}^\sigma(t)
)|^2 + |\nabla {\dd}^\sigma(t)
|^2|{\dd}^\sigma(t)
|^2
]
{\rd x}
\rd t\\
&
+ 
C\int_0^T
\int_{\T^3}
|(\nabla {\dd}^\sigma(t))^{\top} {\dd}^\sigma(t)|^2{\rd x}
\rd   t
\\
   &\leq 
 C
\int_0^T
\int_{\T^3}
\left( \|{\uone}(t)\|_{L^4(\T^3)}^4
+  {\dfrac{1}{2} 
 \| {\utwo}^\sigma(t)\|_{L^2(\T^3)}^{2} 
}
\right) {\rd x}
\rd   t
\\
&+ C\int_0^T
\left (
  \|{\uone}(t)\|_{L^4(\T^3)}^{4}\|{\utwo}(t)^\sigma\|_{L^2(\T^3)}^2  
 + \dfrac{6}{4}\int_{\T^3} |\nabla {\dd}^\sigma (t)|^2 {\rd x}
\right)\rd   t.
  \end{aligned}
 \label{eqn:Engeq355}
\end{equation}
Combining this with  \eqref{eqn:Engeq251}, we obtain
\begin{equation} 
  \begin{aligned}
%
 &
\int_0 ^T 
|\nabla {\utwo}^\sigma (t)|^2_{L^2 (\T^3)}
\rd
t
+
C
\int_0^T
\int_{\T^3}|\Delta {\dd}^\sigma (t)
|^2
{\rd x}
\rd t
\\
   &\leq 
 C
\int_0^T
\left( \|{\uone}(t)\|_{L^4(\T^3)}^4
+  {\dfrac{1}{2} 
 \| {\utwo}^\sigma(t)\|_{L^2(\T^3)}^{2} 
}
\right)
\rd   t
\\
&\quad 
+ C\int_0^T
\left (
  \|{\uone}(t)\|_{L^4(\T^3)}^{4}\|{\utwo}(t)^\sigma\|_{L^2(\T^3)}^2  
 + \dfrac{6}{4}\int_{\T^3} |\nabla {\dd}^\sigma (t)|^2 {\rd x}
\right)\rd   t
\\
 &
\leq 
 C
\int_0^T
 \|{\uone}(t)\|_{L^4(\T^3)}^4
\rd   t
+
C
T
\sup_{0 \leq t \leq T}
U (t) 
%
+ C
\sup_{0 \leq t \leq T}
U (t) 
\int_0^T
  \|{\uone}(t)\|_{L^4(\T^3)}^{4} 
\rd   t\\
&
\leq 
 C
\|{\uone}(t)\|_{L^4(Q_T)}^4
+
  e^{ {   C \int ^T_0 \left( \|{\uone} (s)\|_{L^4(\T^3)}^4
+   \frac{1}{2}\right) ds} } 
\left ( U(0) + 
C 
\|{\uone}(t)\|_{L^4(Q_T)}^4   dt\right)
%
%
,
  \end{aligned}
 \label{eqn:Engeq31}
\end{equation}
which then implies \eqref{eqn:GronEst} as desired.

\end{appendices}



\end{document}